\theoremstyle{plain}
\newtheorem {theorem}{Theorem}[section]
\newtheorem {lemma}[theorem]{Lemma}
\newtheorem {proposition} [theorem]{Proposition}
\theoremstyle{definition}
\newtheorem{definition}[theorem]{Definition}
\newtheorem{remark}[theorem]{Remark}
\theoremstyle{remark}
\newcommand{\R}{\mathbb R}
\renewcommand{\H}{H}
\newcommand{\Mu}{M}
\renewcommand{\epsilon}{\varepsilon}
\renewcommand{\phi}{\varphi}
\newcommand{\e}{\varepsilon}
\renewcommand{\theta}{\vartheta}
\newcommand{\la}{\lambda}
\newcommand{\C}{\mathbb C}
\newcommand{\barint}
{\rule[.036in]{.12in}{.009in}\kern-.16in \displaystyle\int}
\newcommand{\A}{\mathcal{A}}
\renewcommand{\L}{\mathcal {L}}
\newcommand{\h} {\text{\large\Fontamici h}}
\long\def\MSC#1\EndMSC{\def\arg{#1}\ifx\arg\empty\relax\else
     {\par\narrower\noindent
     {\small\it 2010 Mathematics Subject Classification.} \small #1\par}\fi}
\long\def\KEY#1\EndKEY{\def\arg{#1}\ifx\arg\empty\relax\else
     {\par\narrower\noindent
     {\small\it Keywords and Phrases.} \small #1\par}\fi}
\newcommand{\X}{\mathcal X}
\newcommand{\Ric}{\mathrm{Ric}}
\newcommand{\q}{q}
\newcommand{\Hom}{\mathrm{Hom}}
\renewcommand{\la}{\langle}
\newcommand{\ra}{\rangle}
\newcommand{\enn}{\mathcal{N}}
\renewcommand{\div}{\mathrm{div}}
\renewcommand{\h}{\mathfrak{h}}
\newcommand{\M}{\mathcal M}
\subjclass[2010]{49Q10,53A10}
\keywords{Constant mean curvature surfaces, Heisenberg group,
isoperimetric problem}
\begin{document}

\thanks{The authors thank the G.N.A.M.P.A. project: Variational Problems and
Geometric Measure Theory in Metric Spaces. }

\author[V.~Franceschi]{Valentina Franceschi}
\email{valentina.franceschi@inria.fr}

\author[F.~Montefalcone]{Francescopaolo Montefalcone}
\email{montefal@math.unipd.it}

\author[R.~Monti]{Roberto Monti}
\email{monti@math.unipd.it}

\address[Franceschi, Montefalcone, Monti]
{Universit\`a di Padova, Dipartimento di Matematica,
via Trieste 63, 35121 Padova, Italy}

\title[CMC spheres]{CMC spheres in the Heisenberg group}

\begin{abstract} We study a family of spheres with
constant mean curvature (CMC)  
in the Riemannian Heisenberg group $H^1$. These spheres
are conjectured to be the isoperimetric sets of $H^1$.
We prove several results supporting this conjecture.
We also focus our attention on the sub-Riemannian limit.
\end{abstract}

\maketitle

\tableofcontents

\section{Introduction}

\setcounter{equation}{0}

\renewcommand{\X}{{X}}
\newcommand{\T}{{T}}
\newcommand{\Y}{{Y}}

 In this paper, we study a family of spheres with
constant mean curvature (CMC)  
in the Riemannian Heisenberg group $H^1$. We introduce in $H^1$ two
real parameters
that can be used to deform $H^1$ to the sub-Riemannian Heisenberg group, on the
one
hand, and to the Euclidean space, on the other hand.
Even though we are not able to prove that these CMC spheres are in fact
isoperimetric sets, we obtain several partial results in this direction.
Our motivation comes from  the sub-Riemannian Heisenberg group, where it is conjectured that the solution
of the isoperimetric problem is obtained  rotating a Carnot-Carath\'eodory
geodesic around the center of the group, see \cite{Pan1}. This set is known as
Pansu's sphere. The conjecture is proved 
only assuming some regularity ($C^2$-regularity, convexity) or symmetry, see
\cite{CDST,FLM,M3,MR,R,RR}.

Given a real parameter $\tau\in\R$, let $\h = \mathrm{span}\{X,Y,T\}$
be the three-dimensional real Lie algebra
spanned by three elements $X,Y,T$ satisfying the relations $[X,Y] = -2\tau T$
and $[X,T] = [Y,T] = 0$. When $\tau\neq 0$, this is the Heisenberg Lie algebra
and we denote by $H^1$ the corresponding Lie group.
We will omit  reference to the parameter $\tau\neq0$ in our notation.
In suitable coordinates, we
can identify  $H^1$ with $\C\times\R$ and assume that $X,Y,T$ are left-invariant
vector fields in $H^1$ of the form
\begin{equation}\label{XYT}
X  = \frac 1 \epsilon\Big( \frac{\partial}{\partial x}+\sigma y
\frac{\partial}{\partial
t}\Big),
\quad
   Y  = \frac 1 \epsilon\Big( \frac{\partial}{\partial y}  - \sigma
x\frac{\partial}{\partial
t}\Big),
\quad\textrm{and}\quad 
     T  = \epsilon^2 \frac{\partial}{\partial t},
\end{equation}
where $(z,t)\in\C\times\R$ and $z=x+iy$. The  real parameters  
$\varepsilon>0$ and $\sigma\neq 0$
are  such that
\begin{equation}\label{tau-sigma}
 \tau \varepsilon^4=  \sigma .
\end{equation}
Let $\langle \cdot,\cdot\rangle$ be the scalar product on $\h$ making 
$X,Y,T$ orthonormal, that is extended to a left-invariant 
Riemannian metric $g=\langle \cdot,\cdot\rangle$
in $H^1$. 
The Riemannian volume of $H^1$ induced by this metric  
coincides with the Lebesgue measure $\mathcal L^3$ on $\C\times\R$ and, in fact,
it turns out to be independent  of $\varepsilon$ and $\sigma$ (and hence of
$\tau$).
    When
$\epsilon =1$ and $\sigma\to0$, the Riemannian manifold $(H^1,g)$ converges to
the Euclidean space. When $\sigma\neq 0$ and $\varepsilon\to0^+$, then $H^1$
endowed with the
distance function induced by the
rescaled metric $\varepsilon^{-2} \langle \cdot,\cdot\rangle$ converges   to the
sub-Riemannian Heisenberg group.  

The boundary of an isoperimetric region  is a surface with constant mean
curvature.
In this paper, we study  a family of  CMC spheres
$\Sigma_R\subset H^1$, with $R>0$, that foliate $H^1_* = H^1\setminus
\{0\}$, where $0$ is the neutral element of $H^1$. Each sphere $\Sigma_R$ is
centered
at $0$ and can be
described by an explicit formula that was first obtained by Tomter
\cite{T},
see Theorem \ref{TOM} below.
We  conjecture  that,
within its volume class and up to left
translations,
the sphere $\Sigma_R$ is  the unique solution of the isoperimetric problem
in $H^1$. When 
$\epsilon = 1$ and $\sigma\to0$, the spheres $\Sigma_R$ converge 
to the standard sphere of the Euclidean space. When $\sigma \neq 0$ 
is fixed and $\epsilon\to0^+$, the spheres $\Sigma_R$ converge to the Pansu's sphere.

In Section \ref{TRE}, we study some preliminary properties  of $\Sigma_R$,
 its second fundamental form and   principal  curvatures.
A central object  in this setting is the left-invariant $1$-form $\vartheta \in
\Gamma(T^* H^1)$
defined by 
\begin{equation} \label{1.1.1}
 \vartheta(V) = \langle   V,T\rangle\quad \text{for any $V\in\Gamma( TH^1)$.}
\end{equation} 
The kernel of $\vartheta$ is the horizontal distribution.
Let $N$ be the north pole of $\Sigma_R$ and $S=-N$ its south pole.
In $\Sigma_R^* = \Sigma_R\setminus \{\pm N\}$ there is an orthonormal frame of  
vector 
fields $X_1,X_2 \in \Gamma(T \Sigma_R^*)$ such that
$\vartheta(X_1)=0$, i.e., 
$X_1$ is a linear combination of $X$ and $Y$.
In Theorem \ref{3.1}, we  compute the second fundamental form of $\Sigma_R$ in
this frame. We show that the principal directions  of $\Sigma_R$ are given by 
a rotation of the frame $X_1,X_2$ by a \emph{constant} angle depending on the
mean curvature of $\Sigma_R$.

In Section \ref{S4},  we link in a continuous fashion the foliation
property of the Pansu's sphere with the foliation by meridians of the round
sphere in the Euclidean space. 
The foliation $H^1_* = \bigcup_{R>0} \Sigma_R$ determines a unit   vector field
$\enn\in \Gamma(TH^1_*)$ such that $\enn(p)\perp T_p\Sigma_R$ for
any $p\in\Sigma_R$ and $R>0$. The covariant derivative $\nabla\!_\enn\enn$,
where $\nabla$ denotes the Levi-Civita connection induced by the metric $g$,
measures how far the integral lines of $\enn$ are from being geodesics of $H^1$
(i.e., how far the CMC spheres $\Sigma_R$ are from being metric spheres). In
space forms, we would have $\nabla\!_\enn\enn=0$, identically. Instead, in
$H^1$ the normalized vector field  
\[
 \M(z,t) = \mathrm{sgn}(t) \frac{\nabla\!_\enn\enn}{|\nabla\!_\enn\enn|}, \qquad (z,t)  \in    \Sigma_R^*,
\]
is well-defined and smooth outside the center of $H^1$. In Theorem \ref{4.1},
we prove that for any $R>0$ we have
\[
 \nabla_\M^{\Sigma_R} \M = 0 \quad \textrm{on } \Sigma_R^*,
\]
where $\nabla^{\Sigma_R}$ denotes the restriction of $\nabla$ 
to $\Sigma_R$. This means that the integral lines of $\M$ are Riemannian 
geodesics of
$\Sigma_R$. In the coordinates associated with the frame \eqref{XYT}, when $\epsilon =1$ and $\tau=\sigma
\to 0 $  the integral lines of $\M$ converge   to the meridians
of the Euclidean sphere.
When $ \sigma\neq 0$ is fixed and $\epsilon\to0^+$, the vector field $\M$
properly normalized converges to the line flow of the geodesic foliation of the
Pansu's sphere, see Remark \ref{PAN}.

In Section \ref{k_0=0}, we give a proof of a known result that is announced in
\cite[Theorem 6]{A5} in the setting of three-dimensional homogeneous spaces (see
also \cite{MP}). Namely, we show
that 
any topological sphere
with constant mean curvature in $H^1$ is isometric to a CMC sphere $\Sigma_R$. 
The
proof
follows the scheme of the  fundamental paper 
\cite{AR}.
  
The surface $\Sigma_R$ is not totally umbilical and, for large enough $R>0$,  it
even
has negative
Gauss curvature near the equator, see Remark \ref{UMB}.
As a matter of fact, the distance from umbilicality is measured by a linear operator built up on the 
$1$-form  $\vartheta $. We can restrict  the tensor product
$\vartheta\otimes\vartheta$  to any surface $\Sigma$ in $H^1$ with constant
mean curvature $H$ and  then define, at any point $p\in \Sigma$, a symmetric linear
operator $k
\in \Hom(T_p\Sigma; T_p\Sigma)$ by setting
\begin{equation*} \label{kappa}
  k = h + \frac{2\tau^2}{\sqrt{H^2+\tau^2}} q_H\circ
(\vartheta\otimes\vartheta)\circ q_H^{-1},
\end{equation*}
where $h$ is the shape operator  of $\Sigma$ and 
$q_H$ is a rotation of each tangent plane $T_p\Sigma$ by  an angle that depends 
only on $H$, see formula \eqref{alpha_H}.  

In Theorem \ref{5.5}, we prove that for \emph{any} topological
sphere  $\Sigma\subset H^1 $ with constant mean curvature $H$,  the linear
operator $k$ on
$\Sigma$
satisfies the equation $k_0=0$. This follows from the Codazzi's
equations using Hopf's argument on holomorphic quadratic differentials, see
\cite{H}. The fact that $\Sigma$ is a left translation of $\Sigma_R$ now follows from
the analysis of the \emph{Gauss extension} of the topological sphere, see
Theorem \ref{5.9}.

 In some respect, it 
is an interesting issue to link the results of   Section  \ref{k_0=0}  with
the mass-transportation approach recently developed in \cite{BKS}.

In Section \ref{SEI}, we prove a stability result for the spheres $\Sigma_R$.
 Let $E_R\subset H^1 $ be the region bounded by $\Sigma_R$ and let $\Sigma\subset
H^1$  be the boundary of a smooth open set
$E\subset H^1$, $\Sigma = \partial E$, such that $\mathcal L^3(E) = \mathcal
L^{3}(E_R)$. Denoting
by $\A(\Sigma)$   the Riemannian area of $\Sigma$, we  conjecture  that
\begin{equation}
\label{isop}
  \A(\Sigma)- \A(\Sigma_R)\geq 0.
\end{equation}
We also conjecture that 
 a set $E$ is isoperimetric (i.e., 
    equality holds in \eqref{isop}) 
 if and only if  it is a left
translation of $E_R$.  We stress that if isoperimetric sets are topological
spheres, this statement would follow from Theorem
\ref{5.9}.

It is well known that isoperimetric sets are stable for perturbations fixing the
volume: in other words, the second variation
of the area is nonnegative. 
On the other hand,  using Jacobi fields arising from right-invariant vector fields of $H^1$,
it is possible to show that the spheres $\Sigma_R$ are stable with respect
to variations supported in suitable hemispheres, see Section \ref{SEI}.

In the case of the northern and southern hemispheres, we can prove a stronger 
form of
stability. Namely, using the coordinates associated with the frame \eqref{XYT}, for $R>0$ and
$0< \delta< R$ we  consider the cylinder
\[
 C_{\delta,R} = \big\{ (z,t) \in H^1 : |z|<R, t >f(R-\delta; R)\big\},
\]
where $f(\cdot;R)$ is the profile function of $\Sigma_R$, see \eqref{fuf}.
Assume that 
 the closure of 
$E\Delta E_R=E_R\setminus E  \cup E\setminus E_R$ is
a compact subset of $C_{\delta,R}$.
In Theorem \ref{thm:quant}, we prove that there exists a positive constant 
$C_{R\tau\epsilon}>0$ such that the following  quantitative
isoperimetric inequality holds:
\begin{equation} \label{quado}
 \A(\Sigma ) -\A(\Sigma_R) \geq \sqrt{\delta} C_{R\tau\epsilon} \mathcal
L^3(E\Delta E_R)^2.
 \end{equation}
The proof relies on a sub-calibration argument.
This provides further evidence on the conjecture that isoperimetric sets  
are precisely left translations of $\Sigma_R$.
When $\epsilon =1$ and $\sigma\to0$,  inequality 
\eqref{quado}
becomes a restricted form of the quantitative isoperimetric inequality in
\cite{FMP}. For fixed $\sigma\neq0$ and $\epsilon \to 0^+$ the rescaled area
$\epsilon\A$ converges to the sub-Riemannian Heisenberg perimeter and $\epsilon
C_{R\tau \epsilon}$ converges to a positive constant, see Remark \ref{6.2}. Thus inequality
\eqref{quado} reduces to the isoperimetric inequality proved in \cite{FLM}.

\section{Foliation of   $H^1_*$ by concentric stationary spheres}

\label{DUE}
\setcounter{equation}{0}

In this section, we compute the rotationally 
symmetric compact surfaces in $H^1$ that are area-stationary under a volume
constraint. We show that, for any $R>0$, there exists one such a sphere $\Sigma_R$ centered at
$0$. We will also show that 
$H^1_*=H^1 \setminus\{0\}$ is foliated by the  family of these
spheres, i.e., 
\begin{equation}\label{FOL}
    H^1 _* = \bigcup_{R>0} \Sigma_R.
\end{equation}
Each $\Sigma_R$ is given by an explicit formula that is due to Tomter, see
\cite{T}.

We work  in the   coordinates associated with the frame \eqref{XYT}, where the parameters
$\epsilon>0$ and $ \sigma \in\R$ are related by \eqref{tau-sigma}.
For any point $(z,t) \in  H^1$,  we set $r = |z| = \sqrt{x^2+y^2}$.

 \begin{theorem}\label{TOM}
 For any  $R>0$ there exists a unique  compact smooth embedded surface   $\Sigma_R\subset
\H^1$ that is area stationary under volume constraint  and such that
\[ 
\Sigma_R  = \{(z,t) \in
\H^1:  |t| =f(|z|;R) \}
\]
for a function $f(\cdot;R)\in C^\infty([0,R))$ continuous at $r=R$ with
$f(R)=0$. Namely, for any $0\leq r\leq R$ the function is given by
\begin{equation}
\label{fuf}
   f(r;R) = \frac{\e^2}{2\tau} \big[ \omega(R)^2  \arctan(p(r;R)) + \omega(r)^2
p(r;R)\big],
   \end{equation}
where  
\begin{equation*}
\label{omegap}
\omega(r) = \sqrt{1+\tau^2 \e^ 2 r^2}
   \quad
   \text{and}
   \quad
   p(r;R) = \tau\e
\frac{\sqrt{R^2-r^2}}{\omega(r)}.
\end{equation*}
 \end{theorem}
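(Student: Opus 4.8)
The plan is to set up the area-stationarity problem for rotationally symmetric hypersurfaces as an ODE for the profile function, solve it explicitly, and then verify that the closed-form solution \eqref{fuf} is the desired sphere. First I would write a candidate surface as a graph $t=u(r)$ over the disk $\{r<R\}$ (together with its mirror image $t=-u(r)$), and compute the Riemannian area functional $\A$ and the enclosed volume $\V$ for such a surface using the left-invariant frame \eqref{XYT}. Because the metric $g$ is the one making $X,Y,T$ orthonormal, the area element of a rotationally symmetric graph will be an explicit function of $r$, $u(r)$, and $u'(r)$; by rotational symmetry everything reduces to a one-variable variational problem. The Euler--Lagrange equation for $\A-H\V$ (with $H$ a Lagrange multiplier, the constant mean curvature) then becomes a first-order ODE after one integration, since the Lagrangian has a first-integral structure; this is the step where the relation \eqref{tau-sigma} between $\epsilon$, $\sigma$, $\tau$ enters.

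Next I would integrate that first-order ODE. Introducing the functions $\omega(r)=\sqrt{1+\tau^2\epsilon^2r^2}$ and the variable $p(r;R)=\tau\epsilon\sqrt{R^2-r^2}/\omega(r)$ is precisely the substitution that linearizes the integral: one checks that $f$ defined by \eqref{fuf} satisfies $f(R)=0$ (since $p(R;R)=0$ and $\omega(R)^2\arctan(0)=0$) and $f'(r;R)$ matches the integrand coming from the ODE. The smoothness of $f$ on $[0,R)$ and continuity at $r=R$ are then read off directly from the explicit formula, together with the fact that $p(r;R)\to 0$ and $\omega$ stays bounded as $r\to R^-$. The radius $R$ parametrizes the constant $H$ (equivalently the Lagrange multiplier), so each $R$ gives exactly one such profile.

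For existence and uniqueness of the \emph{closed embedded} surface, I would argue as follows: the two graphs $t=\pm f(r;R)$ glue along the circle $\{r=R,t=0\}$ into a topological sphere, and smoothness of the glued surface across the equator follows from analyzing $f$ near $r=R$ — one shows $f(r;R) = c(R)\sqrt{R-r}\,(1+o(1))$ with $c(R)\ne 0$, so that in the normal direction the surface is a smooth graph over the vertical, exactly as for the round sphere. At the poles $(0,\pm f(0;R))$ smoothness follows because $f$ is even and smooth in $r$ near $0$ (only even powers of $r$ appear, by the form of $\omega$ and $p$). Uniqueness: any rotationally symmetric area-stationary compact surface must, away from its poles and equator, be a graph solving the same first-order ODE with the same boundary behavior, hence coincides with $\Sigma_R$ for the appropriate $R$; the normalization $f(R)=0$ and the requirement that the surface be compact and embedded pin down the constant of integration. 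Finally \eqref{FOL} is checked by observing that $(r,R)\mapsto f(r;R)$ is, for each fixed $r$, strictly monotincreasing and surjective onto $(0,\infty)$ as $R$ ranges over $(r,\infty)$, so every point of $H^1_*$ lies on exactly one $\Sigma_R$.

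The main obstacle I expect is not the integration — that is essentially bookkeeping once the substitution $p(r;R)$ is guessed — but rather the careful verification of \textbf{smoothness of the glued sphere at the equator}, where the graph representation $t=\pm f(r;R)$ degenerates ($f'(r;R)\to-\infty$ as $r\to R^-$). This requires re-expressing the surface locally as a graph of $r$ over $t$ near the equatorial circle and checking that this inverse function is smooth, which amounts to controlling the precise asymptotics of $f$ (and of its derivatives) as $r\uparrow R$. A secondary technical point is making the variational derivation rigorous at the poles, where the graph parametrization also degenerates; there one invokes the standard removability/regularity argument for rotationally symmetric stationary surfaces, or simply verifies the CMC equation directly from the explicit formula on $\Sigma_R^*$ and then extends by continuity and ellipticity.
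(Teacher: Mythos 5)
Your proposal follows essentially the same route as the paper: reduce to a radial profile, derive the constant-mean-curvature condition as a first-order ODE (the integration constant vanishing by regularity on the axis, with $\epsilon H R=1$ forcing vertical tangency exactly at the equator), integrate with $f(R)=0$ to get \eqref{fuf}, and then handle smoothness at the equator and poles via the $\sqrt{R-r}$ asymptotics and evenness in $r$. The points you flag as delicate (gluing across the equator, degeneracy at the poles) are exactly the ones the paper treats briefly, so your outline is consistent with its proof.
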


\begin{proof}
Let   $ D_R = \{z\in\C : |z|<R\}$  and for a  nonnegative radial function  
 $f \in  C^\infty(D_R)$  consider the graph  
$
   \Sigma = \{(z,f(z))\in\H^1: z\in D_R\}.
$
A frame of tangent vector fields   $V_1,V_2\in\Gamma(T\Sigma)$ is given by  
\begin{equation} \label{WIWA}
\begin{split}
V_1 = 
\e X  + \e^{-2}(f_x-\sigma y) T 
\quad
\text{and}
\quad
V_2  
 =   \e Y  +\e^{-2} (f_y+ \sigma x) T .
\end{split}
\end{equation}
Let $ g_\Sigma = \langle\cdot,\cdot\rangle$ be the restriction of the metric $g$
of $H^1$ to $\Sigma$.  Using the  entries of $g_\Sigma$ in the frame $V_1,V_2$,
we compute  the determinant
\begin{equation} \label{AER1}
 \begin{split}
 \det( g_\Sigma) &  
 = \e^{4}+ \e^{-2} \big\{ |\nabla f| ^ 2 + \sigma^2 |z|^2 + 2\sigma
(xf_y-yf_x)  \big\},
\end{split}
\end{equation}
where  $\nabla f=(f_x,f_y)$ is the
standard gradient 
of $f$ and $|\nabla f|$ is its length. We clearly have  $xf_y - yf_x  =0$ by the
radial symmetry of $f$.
Therefore, the area of $\Sigma$ is given by 
\begin{equation} \label{AER2}
A (f) = 
 \A (\Sigma) = \int_{D_R} \sqrt{\det( g_\Sigma)}\  dz = \frac1\e \int_{D_R}
\sqrt { \e^{6}+  
|\nabla f| ^ 2 + \sigma^2 |z|^2}\ dz ,
\end{equation}
where $dz$ is the Lebesgue measure in the $xy$-plane.

Thus, if $\Sigma$ is area stationary under a volume constraint, then
for any test function  $\varphi\in C^\infty_c(D_R)$ that is radially symmetric
and with vanishing mean (i.e.,  $\int _{D_R} \varphi\,
dz = 0$) we have 
\[
0 = \left.
 \frac{d}{ds} A(f+s\varphi)\right|_{s=0} = -\frac 1 \e \int_{D_R}  \varphi \,\mathrm{div}\Big(
\frac{\nabla f  }{\sqrt { \e^{6}+  
|\nabla f| ^ 2 + \sigma^2 |z|^2  }}\Big)
 \
 dz ,
\]
where
$\mathrm{div}$ denotes the standard divergence in the $xy$-plane. 
It follows that  there exists a constant   $H\in \R$ such that 
\begin{equation}\label{CURV}
  - \frac{1}{\e} \mathrm{div}\Big(
\frac{\nabla f  }{\sqrt { \e^{6}+  
|\nabla f| ^ 2 + \sigma^2 |z|^2  }}\Big) =  H.
\end{equation}
With abuse of notation we let $f(|z|) = f(z)$. Using   the radial variable $r=|z|$ and   the short notation   
\[
 g(r) = \frac{f_r} {r\sqrt{\e^6 + {f_r}^2 + \sigma^2 r^2}},
\]
the above equation reads as follows:
\[\frac{1}{r}\frac{d}{dr}
\big( r^2
g(r)\big)=\frac 1 r \big( r^2 g_r(r) + 2r g(r) \big) =
- \e H.
\]
Then there exists a constant $K\in \R$ such that  
$r^2 g = -\e r^2 H + K$.
Since  $g$ is bounded at   $r=0$, it must be  $K=0$ and thus  $g =- \e
 H$, and we get
\[
\frac{f_r} {r\sqrt{\e^6 + {f_ r}^2+ \sigma^2 r^2 }} = -\e H .
\]
From this equation, we see that $f_r$ has a sign.
Since   $\Sigma_R$  is compact, it follows that $H\neq 0$.
Since $f\geq 0$ we   have   $f_r<0$   and therefore $H>0$.

The surface  $\Sigma_R$  is smooth at the ``equator'' 
(i.e., where $|z|=R$ and $t=0$) and thus  we   have $f_r(R)
=-\infty$. As we will see later, this is implied by the relation 
\begin{equation}\label{eHR=1}
\epsilon HR=1, 
\end{equation}
that will be assumed throughout the paper.
Integrating the above equation 
we find
\begin{equation}\label{f_r}
 f_r(r) = - \e^4 Hr \sqrt{\frac{1+\tau^2\e^2 r^ 2}{1-\e^2 H^2 r^2}}
 = -\e^ 3 r \sqrt{\frac{1+\tau^2 \e^2 r^ 2}{R^2 - r^2}} ,\quad
 0\leq r<R.
\end{equation}
Integrating this expression on the interval $[r,R]$ and using   $f(R)=0$ we
finally find
\begin{equation}
\label{palix}
   f(r;R) = \e^ 3 \int_r^ R \sqrt{ \frac{1+\tau^2 \e^2 s^2}{R^2-s^2}} \, s ds.
\end{equation}
After some  computations, we obtain the explicit formula
\[
 f(r;R) = \frac{\e^2}{2\tau} \Big[  \omega(R)^2  \arctan\Big( \tau\e
\frac{\sqrt{R^2-r^2}}{\omega(r)}\Big) +\tau\e \omega (r) \sqrt{R^2-r^2} 
\Big],\quad
0\leq r\leq R,
\]
with   $\omega(r) = \sqrt{1+\tau^2 \e^2 r^2}$. This is formula \eqref{fuf}.
\end{proof}

\begin{remark} 
The function $f(\cdot;R) =f(\cdot;R;\tau;\epsilon)$ depends also on the
parameters $\tau$ and $\epsilon$, that are omitted in our notation.
With  $\epsilon =1$, we find  
\[
\lim_{\tau\to0} f(r;R;\tau;1) =\sqrt{R^2-r^2}.
\]
When $\tau \to0$, the spheres $\Sigma_R$ converge to Euclidean spheres with
radius $R>0$
in the
three-dimensional space.

With $\tau =\sigma/\epsilon ^4$ as in \eqref{tau-sigma}, we find the asymptotic
\[
\begin{split}
\lim_{\epsilon \to0} f(r;R;\sigma/\epsilon^4;\epsilon)  
&
=\frac{\sigma}{2}\Big[R^2\arctan\Big(\frac{\sqrt{R^2-r^2}}{r}\Big) +r
\sqrt{R^2-r^2}
\Big]
\\
&
=\frac{\sigma}{2}\Big[R^2\arccos\Big(\frac{r}{R}\Big) +r
\sqrt{R^2-r^2}
\Big] ,
\end{split}
\]
which  gives the profile function of  the  Pansu's sphere, the
conjectured solution 
to the sub-Riemannian Heisenberg isoperimetric problem, see e.g.~\cite{MR} or
\cite{M3}, with $R=1$ and $\sigma = 2$.
 \end{remark}

\begin{remark}
Starting from formula   \eqref{fuf}, we can compute the derivatives
of  $f(\cdot;R)$ in the variable  $R$. The first order derivative is given by 
\begin{equation}
\label{f_R}
 f_R(r;R) = \tau \epsilon^4 R   \Big[ \arctan\big( p(r;R) \big) +\frac{1}{
p(r;R)}\Big]= \frac{\sigma R}{p(r;R) \ell(p(r;R))},
\end{equation}
where $\ell:[0,\infty)\to\R$ is the function defined as
\begin{equation}
\label{ell}
\ell(p) = \frac{1}{1+ p\arctan(p)}.
\end{equation}
The geometric meaning of $\ell$ will be clear in formula \eqref{NR}.

\end{remark}

We now establish the foliation property \eqref{FOL}.

\begin{proposition}\label{fol}
For any nonzero   $(z,t) \in \H^1$ there exists a unique  $R>0$ such that 
$(z,t) \in \Sigma_R$. 
\end{proposition}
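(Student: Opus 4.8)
The plan is to reduce the statement to a one-variable monotonicity property of the profile function. Since each $\Sigma_R$ is invariant under the reflection $(z,t)\mapsto(z,-t)$, it is enough to fix $(z,t)\in\H^1\setminus\{0\}$ with $t\geq0$, to put $r=|z|$ and $t_0=t$, and to prove that there is exactly one $R>0$ with $f(r;R)=t_0$. Observe first that if $(z,t)\in\Sigma_R$ then $|z|\leq R$, so necessarily $R\geq r$; hence no value $R<r$ can ever occur, and we may restrict attention to $R\in[r,\infty)\cap(0,\infty)$. I would then dispose of the degenerate case $t_0=0$ (which forces $z\neq0$, i.e.\ $r>0$): from \eqref{palix} the integrand is strictly positive, so $f(r;R)>0$ for every $R>r$ while $f(r;r)=0$, and therefore $R=|z|$ is the one and only admissible radius.

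For $t_0>0$ the core of the argument is to show that $\phi(R):=f(r;R)$ is a continuous, strictly increasing bijection of $(r,\infty)$ (with the convention $r=0$ allowed, in which case the interval is $(0,\infty)$) onto $(0,\infty)$. Continuity and smoothness on $(r,\infty)$ are immediate from \eqref{fuf}; the left endpoint value is $\phi(r^+)=f(r;r)=0$, visible from \eqref{palix}. Strict monotonicity is the decisive point and it follows at once from the derivative formula \eqref{f_R}: with $p=p(r;R)=\tau\e\sqrt{R^2-r^2}/\omega(r)$, which has the same sign as $\tau$ for $R>r$, one has $f_R(r;R)=\tau\e^4R\big(\arctan(p)+p^{-1}\big)$, and since $\arctan(p)+p^{-1}$ carries the sign of $p$ (hence of $\tau$), the product $\tau\big(\arctan(p)+p^{-1}\big)$ is strictly positive; as $\e^4R>0$ too, we get $f_R(r;R)>0$ throughout $(r,\infty)$. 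Finally, surjectivity onto $(0,\infty)$ comes from the crude lower bound obtained from \eqref{palix} by dropping the nonnegative term $\tau^2\e^2s^2$ under the square root,
\[
 f(r;R)\ \geq\ \e^3\int_r^R \frac{s\,ds}{\sqrt{R^2-s^2}}\ =\ \e^3\sqrt{R^2-r^2}\ \longrightarrow\ \infty
 \qquad(R\to\infty).
\]
Combining continuity, $\phi(r^+)=0$, strict increase and $\phi(R)\to\infty$, the intermediate value theorem produces an $R$ with $\phi(R)=t_0$ and injectivity makes it unique; since all $R<r$ are excluded a priori and $R=r$ realizes only $t_0=0$, this $R$ is the unique one in the whole of $(0,\infty)$.

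The argument is essentially routine; the only points needing a little care — the ``main obstacle'', such as it is — are the bookkeeping of the two boundary situations ($t_0=0$ and $r=0$) and checking that the monotonicity conclusion drawn from \eqref{f_R} holds for both signs of the parameter $\tau$, which is exactly the sign computation spelled out above.
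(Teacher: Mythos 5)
Your proof is correct and follows essentially the paper's strategy: show that $R\mapsto f(r;R)$ is strictly increasing for $R\geq r$, vanishes at $R=r$, and tends to $+\infty$, then conclude existence and uniqueness by monotonicity and the intermediate value theorem. The only minor difference is how the monotonicity is verified: you differentiate via \eqref{f_R} and run a sign analysis valid for both signs of $\tau$, whereas the paper integrates by parts in \eqref{palix} to get $f(r;R)=\e^3\big\{\sqrt{R^2-r^2}\,\omega(r)+\int_r^R\sqrt{R^2-s^2}\,\omega_r(s)\,ds\big\}$, from which both the strict increase in $R$ and the divergence as $R\to\infty$ are immediate.
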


\begin{proof}
Without loss of generality we can assume that $t\geq 0$.
After an integration by parts in  \eqref{palix}, we obtain the formula  
\[
   f(r;R) = \e^ 3\Bigg\{ \sqrt{R^2-r^2} \omega(r) +\int_r^R \sqrt{R^2-s^2}
\omega_r(s) ds\Bigg\},\quad 0\leq r\leq R.
\]
Since $\omega_r(r)>0$ for $r>0$,
we deduce that the function   $R\mapsto
f(r;R)$ is strictly increasing for    $R\geq r$. Moreover, we have  
\[
 \lim_{R\to\infty} f(r;R) = \infty,
\]
and hence for any $r\geq 0$ there exists a unique  $R\geq r$ such that $f(r;R) =t$.
 \end{proof}

\begin{remark}
By Proposition \ref{fol}, we can define the function
 $R:\H^1 \to[0,\infty)$ by  letting $R(0)=0$ and $R(z,t) =R$ if and only if $(z,t)
\in \Sigma_R$ for   $R>0$. The function $R(z,t)$, in fact, depends on
$r=|z|$ and thus we may consider $R(z,t)=R(r,t)$ as a function of $r$ and $t$.
This function is implicitly defined by the equation $ 
       |t| = f(r; R(r,t))$. Differentiating this equation, we find the derivatives
of $R$, i.e.,
       \begin{equation}\label{R_t}
R_r  = -\frac{f_r}{f_R}
\quad \text{and}
\quad
R_t = \frac{\mathrm{sgn}(t)}{f_R}, 
 \end{equation}
where $f_R$ is given by \eqref{f_R}.
\end{remark}

\section{Second fundamental form of $\Sigma_R$}
\label{TRE}
 \setcounter{equation}{0}

In this section, we  compute the second fundamental form of the   spheres 
$\Sigma_R$. In fact, we will see that  $H =1/(\varepsilon R)$ is the mean
curvature of $\Sigma_R$, as already clear from \eqref{CURV} and \eqref{eHR=1}.  
Let $N= (0, f(0;R)) \in\Sigma_R$ be the north pole of $\Sigma_R$ and 
let $S = -N = (0,- f(0;R))$ be its   south pole.
In $\Sigma_R^*=\Sigma_R\setminus \{\pm N\}$ there is a   frame of  tangent 
vector 
fields $X_1,X_2 \in \Gamma( T \Sigma_R^*)$ such that
\begin{equation}
\label{pipo}
|X_1| = |X_2| = 1,\quad \langle X_1,X_2\rangle = 0,\quad \vartheta(X_1)=0,
\end{equation}
where $\vartheta$ is the left-invariant $1$-form introduced in \eqref{1.1.1}.
Explicit expressions for $X_1$ and $X_2$ are given in formula \eqref{xixo}
below.
This frame is unique up  to the sign  $\pm X_1$ and $\pm X_2$.
Here and in the rest of the paper, we denote by $\enn$ the exterior  unit normal
to the spheres $\Sigma_R$.

The second fundamental form $h$ of  $\Sigma_R$ with respect to the frame
$X_1,X_2$  is given by 
\[
  h=(h_{ij})_{i,j=1,2},\qquad  h_{ij}  = \langle \nabla_{X_i} \mathcal N,X_j\rangle,\quad i,j=1,2,
\]
where $\nabla $ denotes the Levi-Civita connection of $H^1$
 endowed with the left-invariant metric $g$.
The linear connection $\nabla$ is represented 
by the  linear mapping   $\h\times\h\mapsto \h$, $(V,W) \mapsto \nabla _{V} W$.
Using the fact that the connection is torsion free and metric, it can be  
seen that $\nabla$ is characterized by the following relations:
\begin{equation}\label{FR}
\begin{split}
 &\nabla_XX =\nabla_Y Y=\nabla_TT=0,
\\
&
 \nabla _Y X = \tau  T 
 \quad 
 \textrm{and}\quad
  \nabla _X Y =- \tau T,
\\
&
\nabla _T X = \nabla _XT = \tau Y,
\\
&
\nabla _T Y = \nabla _YT = -\tau X.
\end{split}
 \end{equation}
Here and in the rest of the paper, 
we use  the coordinates associated with the frame \eqref{XYT}.
For $(z,t) \in H^1$, we set $r = |z|$
and use the short notation
\begin{equation} \label{RHO}
  \varrho = \tau \epsilon r.
\end{equation}

\begin{theorem} \label{3.1}
For any $R>0$, the second fundamental form $h$ 
of $\Sigma_R$ with respect to the frame   $X_1,X_2$ in \eqref{pipo} at the point
  $(z,t) \in
\Sigma_R$ 
is given by
\begin{equation} \label{ACCA}
h = \frac{1}{1+\varrho^2}
\left(
\begin{array}{cc}
H ( 1+2\varrho^2) & \tau \varrho^2
\\
\tau \varrho^2 & H
\end{array}
\right),
\end{equation}
where   $R=1/H\e$ and $H$ is the mean curvature of $\Sigma_R$.
The principal curvatures of $\Sigma_R$ are given by
\begin{equation}\label{kappa_12}
\begin{split}
 \kappa_1 & = H + \frac{\varrho^2}{1+\varrho^2} \sqrt{H^2 + \tau^2}  ,
\\
\kappa_2 & = H - \frac{\varrho^2}{1+\varrho^2} \sqrt{H^2 + \tau^2}.
\end{split}
\end{equation}
Outside the north and south poles,    principal directions  are given by
\begin{equation}
\label{K_12}
\begin{split}
   K_1 & = \cos\beta X_1+\sin\beta X_2,
   \\
   K_2 & = -\sin\beta X_1+\cos\beta X_2,
   \end{split}   
\end{equation}
where $\beta  =\beta_H\in (-\pi/4,\pi/4)$ is the angle
\begin{equation}\label{beta_H}
\beta_H =\arctan \Bigg(\frac{\tau   }{H+\sqrt{ H^2+\tau^2 }}\Bigg).
\end{equation}
\end{theorem}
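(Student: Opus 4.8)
The plan is to compute $h$ directly from the explicit graph representation of $\Sigma_R$, and then to diagonalize the resulting $2\times 2$ matrix by hand.

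First I would write down explicit formulas for the orthonormal frame $X_1, X_2 \in \Gamma(T\Sigma_R^*)$ satisfying \eqref{pipo}. Starting from the tangent frame $V_1, V_2$ in \eqref{WIWA} (evaluated on the graph $t = f(|z|;R)$, with $f_r$ given by \eqref{f_r}), one imposes $\vartheta(X_1) = 0$: since $\vartheta(V_i) = \e^{-2}(f_{x_i} \mp \sigma x_j)$, the combination $X_1$ proportional to $(f_y + \sigma x)V_1 - (f_x - \sigma y)V_2$ lies in the horizontal distribution, hence is a combination of $X$ and $Y$ only. Normalizing and then taking $X_2$ to be the unit tangent vector orthogonal to $X_1$ (which will have a nonzero $T$-component) gives the frame. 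Using the radial symmetry of $f$ and the abbreviation $\varrho = \tau\e r$ together with $\omega(r)^2 = 1 + \varrho^2$ and $R = 1/(H\e)$, these expressions simplify considerably; I expect $X_1$ to be (up to sign) the angular horizontal field $\frac{1}{r}(-y X + x Y)$ and $X_2$ to involve $\e X^{\mathrm{rad}} + (\text{stuff})\,T$ where $X^{\mathrm{rad}} = \frac1r(xX+yY)$ is the radial horizontal field and the $T$-coefficient is fixed by $f_r$.

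Next I would compute $\enn$, the exterior unit normal: it is the unit vector orthogonal to both $X_1$ and $X_2$, equivalently (up to sign and normalization) the metric dual of the differential of the defining function $t - f(|z|;R)$. Then $h_{ij} = \langle \nabla_{X_i}\enn, X_j\rangle$ is computed using the connection table \eqref{FR}; equivalently, and perhaps more cleanly, $h_{ij} = -\langle \enn, \nabla_{X_i} X_j \rangle$, expanding $X_i, X_j$ in the basis $X, Y, T$ (with coefficients that are functions of $z$) and using \eqref{FR} plus the Euclidean directional derivatives of those coefficients. The symmetry $h_{12} = h_{21}$ is automatic; the content is the three entries. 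The mean curvature $\tr h = \kappa_1 + \kappa_2 = 2H$ should drop out as a consistency check against \eqref{CURV}–\eqref{eHR=1}. I anticipate the off-diagonal term $\tau\varrho^2/(1+\varrho^2)$ arises precisely from the $\nabla_T X = \tau Y$, $\nabla_T Y = -\tau X$ terms acting on the $T$-component of $X_2$, which is why it vanishes at the poles ($\varrho = 0$) and is controlled by $\tau$.

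Finally, with $h$ in the form \eqref{ACCA}, diagonalization is elementary: the eigenvalues of $\frac{1}{1+\varrho^2}\left(\begin{smallmatrix} H(1+2\varrho^2) & \tau\varrho^2 \\ \tau\varrho^2 & H \end{smallmatrix}\right)$ are $H \pm \frac{1}{1+\varrho^2}\sqrt{H^2\varrho^4 + \tau^2\varrho^4} = H \pm \frac{\varrho^2}{1+\varrho^2}\sqrt{H^2+\tau^2}$, which is \eqref{kappa_12}; the eigenvector computation gives a rotation angle $\beta$ with $\tan 2\beta = \frac{2\tau\varrho^2}{H(1+2\varrho^2) - H} = \frac{\tau}{H}$, and the half-angle identity $\tan\beta = \frac{\tan 2\beta}{1+\sqrt{1+\tan^2 2\beta}} = \frac{\tau}{H + \sqrt{H^2+\tau^2}}$ yields \eqref{beta_H}. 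The key point making the theorem clean — and the one I would want to highlight — is that $\tan 2\beta = \tau/H$ is \emph{independent of $\varrho$}, i.e.\ of the point on $\Sigma_R$; this is the "constant angle" phenomenon, and it is forced by the ratio $h_{11} - h_{22} = 2H\varrho^2/(1+\varrho^2)$ matching $2h_{12}\cdot(H/\tau)$ at every point. The main obstacle is purely computational bookkeeping: getting the explicit frame $X_1, X_2$ and then carrying the covariant-derivative calculation through \eqref{FR} without error, especially tracking how $f_r$ from \eqref{f_r} enters; everything after $h$ is in the form \eqref{ACCA} is routine linear algebra.
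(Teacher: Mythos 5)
Your plan is essentially the paper's own proof: the paper builds exactly this frame (formula \eqref{xixo}), the normal \eqref{ENNE}, computes $h_{11},h_{12},h_{22}$ from the connection table \eqref{FR}, and then gets \eqref{kappa_12} and \eqref{beta_H} by the same elementary linear algebra (your observation $\tan 2\beta=\tau/H$, independent of $\varrho$, is precisely why the principal directions make a constant angle with $X_1,X_2$). One small correction to your heuristic: for $\sigma\neq 0$ the angular field $\tfrac1r(-yX+xY)$ is \emph{not} tangent to $\Sigma_R$, so $X_1$ is not that field; your general construction $(f_y+\sigma x)V_1-(f_x-\sigma y)V_2$ is the right one and yields the paper's $X_1$, which carries an additional radial horizontal component proportional to $p$.
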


\begin{proof} 

Let  $a,b:\Sigma_R^*\to\R$ and $c,p:\Sigma_R\to\R$ be the
following 
functions depending on the radial coordinate $r=|z|$:  
\begin{equation}
\label{abcp}
\begin{split}
a & =a(r;R) = \frac{\omega(r)}{r\omega(R)},
\quad
b =b(r;R)  = \pm \frac{\sqrt{R^2-r^2} } {r R \omega(R)},
\\
c &=c(r;R)  =\frac{r\omega(R)  } {  R \omega(r)},
\quad
p=p(r;R) =  \pm \tau \epsilon \frac{\sqrt{R^2-r^2} } { \omega(r)}.
\end{split}
\end{equation}
In fact,
$b$ and $p$ also depend on the sign of $t$. Namely, in $b$ and   $p$ we choose
the sign $+$  in the northern hemisphere, that is for $t\geq0$, while
we choose  the sign $-$  in the southern hemisphere, where  $t\leq0$.  
Our computations are in the case $t\geq0$.

The vector fields
\begin{equation}
\label{xixo}
\begin{split}
X_1 &  = - a \big(( y-xp) X - (x+yp) Y\big),
\\
X_2   & =- b \big(  (x+yp) X+( y-xp) Y\big) + c T
\end{split}
\end{equation}
form an orthonormal frame for $T\Sigma_R^*$ satisfying
\eqref{pipo}. This frame can be computed starting from \eqref{WIWA}.
The outer unit normal to $\Sigma_R$ is given by  
\begin{equation}
\label{ENNE}
\begin{split}
\mathcal N
 =\frac{1}{R} \Big\{ (x+y p) X +(y-xp) Y +\frac{p}{\tau\epsilon}T\Big\}.
\end{split}
\end{equation}
Notice that this formula is well defined also at the poles.

We compute the entries $h_{11} $ and $h_{12}$. Using $X_1 R=0$, we find 
\begin{equation}
\label{M1}
\begin{split}
  \nabla_{X_1} \mathcal N= \frac 1 R \Big\{
  & X_1(x+yp) X + X_1(y-xp) Y + X_1\Big(\frac{p}{\tau\epsilon}\Big) T
\\
& 
+(x+yp) \nabla_{X_1} X + (y-xp) \nabla_{X_1} Y +\frac{p}{\tau\epsilon}
\nabla_{X_1} T \Big\},
\end{split}
\end{equation}
where, by the fundamental relations \eqref{FR},
\begin{equation}\label{M2}
\begin{split}
&\nabla_{X_1} X =\tau  a(x+yp) T,\quad
\\
&
\nabla_{X_1} Y =\tau  a(y-xp) T,
\\
&
\nabla_{X_1} T =-\tau a\big[  ( y-xp) Y+(x+yp) X\big].
\end{split}
\end{equation}
Using the formulas
\[
X_1 x = -\frac{a}{\epsilon} ( y-xp )\quad\text{and}
\quad
X_1 y =\frac{a}{\epsilon} (x+yp), 
\]
we find the derivatives 
\begin{equation}\label{M3}
\begin{split}
X_1(x+yp) & = \frac{a}{\epsilon}\big(2xp +y(p^2-1)\big)+yX_1 p,
\\
X_1(y-xp ) & = \frac{a}{\epsilon}\big(2yp +x(1-p^2 )\big)-x X_1 p.
\end{split}
\end{equation}
Inserting \eqref{M3} and \eqref{M2} into \eqref{M1}, we obtain
\begin{equation}
\label{M4}
\begin{split}
  \nabla_{X_1} \mathcal N= \frac 1 R \Big\{
   \Big[ -\frac{a}{\epsilon} (y-xp) +y X_1p\Big] X
 & + \Big[ \frac{a}{\epsilon} (x+yp) -x  X_1p\Big]Y
\\
&
  + \Big[ \frac{X_1 p}{\tau\epsilon} +\tau r^ 2 a (p^2+1)\Big] T
  \Big\}.
\end{split}
 \end{equation}
 From this formula we get
 \[
 h_{11} =\langle  \nabla_{X_1} \mathcal N ,X_1\rangle
  =\frac{r^2 a}{R\epsilon} \big\{ a (p^2+1)-\epsilon X_1p\big\},
 \]
 where  
   $p^2+1 = \omega(R)^2/\omega(r)^2$ and   $X_1p$ can be computed starting from 
 \begin{equation}
 \label{p_r}
    p_r(r;R) = -\tau\epsilon r\frac{\omega(R)^2}{\sqrt{R^2-r^2} \omega(r)^3}.
    \end{equation}
    Namely, also using the formula for $a$ and $p$ in \eqref{abcp}, we have
    \[
    X_1 p =\frac{r a}{\epsilon} p p_r   
  =  
 -\tau^2\epsilon r \frac{\omega(R)}{   \omega(r)^3}.
 \]
With \eqref{eHR=1} and \eqref{RHO}, we  finally  find
\[
 h_{11} =\frac{1}{R\epsilon}\Big( 1+ \frac{\tau^2\epsilon^2
r^2}{\omega(r)^2}\Big) =  H \Big( 1+\frac{\varrho^2}{1+\varrho^2}\Big).
\]

>From \eqref{M4} we also deduce
\[
h_{12}=\langle \nabla_{X_1} \mathcal N ,X_2\rangle
=-\frac{b}{R}r^2pX_1p+\frac{c}{R}\Big\{\frac{X_1p}{\tau\varepsilon}+\tau
r^2a(1+p^2)\Big\},
\]
and using the formula for $X_1p$ and the formulas in \eqref{abcp} we
obtain
\[
h_{12} 
=\frac{\tau\varrho^2}{1+\varrho^2}
.
\]

To compute the entry $h_{22}$, we start from 
\begin{equation}
\label{eq:N1}
\begin{split}
\nabla_{X_2}\mathcal N
=\frac{1}{R}\Big\{&
X_2(x+yp)X+X_2(y-xp)Y+\frac{X_2(p)}{\tau \varepsilon}T\\
&\quad
+(x+yp)\nabla_{X_2}X+(y-xp)\nabla_{X_2}Y
+\frac{p}{\tau\varepsilon}\nabla_{X_2}T
\Big\},
\end{split}
\end{equation}
where, by \eqref{FR} we have
\begin{equation}
\label{eq:N2}
\begin{split}
\nabla_{X_2}X&=-\tau b(y-xp )T+\tau c Y,\\
\nabla_{X_2}Y&=\tau b(x+yp)T-\tau c X,\\
\nabla_{X_2}T&=-\tau b(x+yp)Y+\tau b(y-xp ) X.
\end{split}
\end{equation}
Since $X_2x=-b(x+yp )/\varepsilon$ and $X_2y=-b(y-xp )/\varepsilon$, we
get
\begin{equation}
\label{eq:N3}\begin{split}
X_2(x+yp)&=-\frac{b}{\varepsilon}\big(2yp+x(1-p^2)\big)-yX_2p,\\
X_2(y-xp)&=\frac{b}{\varepsilon}\big(2xp+y(p^2-1)\big)+xX_2p.
\end{split}
\end{equation}
Inserting \eqref{eq:N2} and \eqref{eq:N3} into \eqref{eq:N1} we obtain
\begin{equation*}
\label{eq:N4}
\begin{split}
\nabla_{X_2}\mathcal N
=\frac{1}{R}\Big\{
-&\Big[\frac{b}{\varepsilon}(x+yp)+yX_2p+\tau c(y-xp)\Big]X\\
  +&\Big[-\frac{b}{\varepsilon}( y-xp )+xX_2p+\tau c (x+yp)\Big]Y
-\frac{X_2p}{\tau\varepsilon}T
\Big\},
\end{split}
\end{equation*}
and thus 
\[
h_{22}=\langle \nabla_{X_2}\mathcal N, X_2\rangle
=
\frac{br^2}{\varepsilon R}\big\{b(1+p^2)+\varepsilon pX_2p\big\}-\frac{c
X_2p}{\tau\varepsilon R}.
\]
Now   $X_2p$ can be computed by using  \eqref{p_r} and   the formulas
\eqref{abcp}, and we obtain
\[
X_2p=-\frac{\tau r\omega(R)}{R\omega(r)^3}.
\]By \eqref{eHR=1} and \eqref{RHO} 
we then conclude that
\[
h_{22} 
=\frac{H}{1+\varrho^2}.
\]

The principal curvatures $\kappa_1,\kappa_2$ of $\Sigma_R$
are the solutions to the system
\[
\begin{split}
\left\{
\begin{array}{l}
\kappa_1+\kappa_2 =\mathrm{tr}(h) =2H
\\
\displaystyle \kappa_1\kappa_2 =\det(h) = \frac{H^2(1+2\varrho^2)
-\tau^2\varrho^4}{(1+\varrho^2)^2}.
\end{array}
\right.
\end{split}
\]
They are given explicitly by the formulas \eqref{kappa_12}.

Now let $K_1,K_2$ be tangent vectors as in \eqref{K_12}.
We identify  $h$ with the shape operator   $h\in \Hom( T_p\Sigma_R;
T_p\Sigma_R)$, $h(K) = \nabla_K\enn$, at any point $p\in \Sigma_R$ and   $K\in
T_p\Sigma_R$. 
When
$\varrho\neq0$ (i.e., outside the north and south poles), the system of
equations
\[
h(K_1) =\kappa_1 K_1\quad \text{and}\quad
h(K_2) =\kappa_2 K_2
\]
is satisfied if and only if the angle     $\beta=\beta_H $ is chosen as in
\eqref{beta_H}. The argument of $\arctan$ in \eqref{beta_H}
 is in the interval $(-1,1)$ and thus $\beta_H \in (-\pi/4,\pi/4)$.
\end{proof}

\begin{remark}
\label{UMB} When $2H^2 <(\sqrt{5}-1)\tau^2$,
the  set of points $(z,t) \in \Sigma_R$
such that 
\[
  \varrho^2   > \frac{H}{\sqrt{H^2+\tau^2}-H}
\]
is nonempty. The inequality above  is equivalent to $\kappa_2<0$ at the point
$(z,t) \in\Sigma_R$. This means that,  for large enough $R$, points in
$\Sigma_R$ near the equator 
have strictly negative Gauss curvature.
\end{remark}

\begin{remark}
The convergence of the Riemannian second fundamental form towards its 
sub-Riemannian counterpart is studied in \cite{CPT}, in the setting of Carnot
groups.
\end{remark}

\section{Geodesic foliation of $\Sigma_R$}
\label{S4}
\setcounter{equation}{0}

We prove that each CMC sphere $\Sigma_R$ is foliated by a family of geodesics
of $\Sigma_R$ joining the north to the south pole.
In fact, we show that the foliation is governed by the normal $\enn$ to the
foliation $H^1_*=\bigcup_{R>0}\Sigma_R$. In the sub-Riemannian limit, we
recover the foliation property of the Pansu's sphere. 
In the Euclidean limit, we find the foliation of the round sphere with
meridians.

We need two preliminary lemmas.
We define a 
function $R: H^1\to [0,\infty)$ by
letting $R(0)=0$ and $R(z,t) = R$ if and only if $(z,t)\in\Sigma_ R$.
In fact, $R(z,t)$ depends on    $r=|z|$ and $t$.
The function $p$ in \eqref{abcp} is of the form $p=p(r,R(r,t))$.

  Now, we compute the
derivative of these functions in the normal direction $\mathcal N$.

\begin{lemma}
The derivative along $\mathcal N$ of the functions $R$ and $p$ are,
respectively,
\begin{equation}\label{NR}
 \mathcal N R  
 =\frac{\ell(p)}{\epsilon},
\end{equation}
and 
\begin{equation}\label{Np}
  \mathcal N p = \epsilon \tau^2 
  \frac{ R^2 \omega(r)^2 \ell(p)-r^2 \omega(R)^2 }{R\omega(r)^4p} ,
\end{equation}
where $\ell(p) = (1+p\arctan p) ^{-1} $, as    in \eqref{ell}.
\end{lemma}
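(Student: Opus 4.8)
The two formulas \eqref{NR} and \eqref{Np} are both computed by directly differentiating the defining relation $|t| = f(r; R(r,t))$ (equivalently, differentiating $p = p(r, R(r,t))$) along the vector field $\enn$ given explicitly in \eqref{ENNE}. First I would record the action of $\enn$ on the coordinate functions: from \eqref{ENNE} and the form \eqref{XYT} of $X, Y, T$ one gets $\enn x$, $\enn y$, hence $\enn r = \la z/|z|, (\enn x, \enn y)\ra$; a short computation using $x(x+yp) + y(y-xp) = r^2$ gives $\enn r = r/(\e R)$. Likewise $\enn t$ comes from the $X$- and $Y$-components (which carry $\s y \p_t$, $-\s x\p_t$ terms) together with the $T$-component $\e^2 p/(\tau\e) \cdot \p_t$; again the $\s(xy - yx)$-type cancellations occur and one is left with $\enn t = \e p \omega(r)^2 /(\tau R) $ or a similar clean expression (using $\tau\e^4 = \s$, $\e H R = 1$).

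**Deriving \eqref{NR}.** Apply $\enn$ to $t = f(r; R)$ (working in the region $t \geq 0$, as the statement is sign-symmetric): $\enn t = f_r(r;R)\, \enn r + f_R(r;R)\, \enn R$. Now $f_r$ is known from \eqref{f_r}, $f_R$ is known from \eqref{f_R} as $f_R = \s R/(p\,\ell(p))$, and $\enn r$, $\enn t$ were just computed. Solving for $\enn R$ gives $\enn R = (\enn t - f_r \enn r)/f_R$, and after substituting the explicit forms — here the $\arctan$ in $f_R$ is exactly absorbed by the $\ell(p) = (1 + p\arctan p)^{-1}$ — one should land on $\enn R = \ell(p)/\e$. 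The only real work is bookkeeping of the $\omega(r)$, $\omega(R)$, $\sqrt{R^2 - r^2}$ factors; nothing is conceptually hard.

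**Deriving \eqref{Np}.** Since $p = p(r, R(r,t)) = \tau\e\sqrt{R^2 - r^2}/\omega(r)$, write $\enn p = p_r(r;R)\,\enn r + p_R(r;R)\, \enn R$, where $p_r$ is already given in \eqref{p_r} and $p_R = \tau\e R/(\sqrt{R^2-r^2}\,\omega(r))$ is an elementary $R$-derivative. Insert $\enn r = r/(\e R)$ and $\enn R = \ell(p)/\e$ from the previous step. Collecting over the common denominator $R\,\omega(r)^4 p$ and simplifying the numerator — the $p_r \enn r$ term contributes the $-r^2\omega(R)^2$ piece, the $p_R \enn R$ term contributes the $R^2\omega(r)^2\ell(p)$ piece — yields exactly \eqref{Np}.

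**Main obstacle.** There is no serious obstacle: the lemma is a computation, and the only thing that can go wrong is an algebraic slip in the $\enn$-derivatives of $x, y, t$ or in the cancellation that produces $\ell(p)$. The one point deserving care is that $p$ (and $b$) carry a sign depending on the hemisphere (as flagged after \eqref{abcp}); I would do the computation for $t \geq 0$, note that both sides of \eqref{NR} and \eqref{Np} transform consistently under $t \mapsto -t$ (the left sides via $\enn$ flipping, $p$ flipping; $R$ is even in $t$), and conclude the formulas hold on all of $\Sigma_R^*$. I would also double-check the normalization constants against \eqref{eHR=1} and \eqref{tau-sigma} at the end, since those are the identities that make the final expressions collapse to the stated closed forms.
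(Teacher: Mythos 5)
Your proposal is correct and follows essentially the same route as the paper: the paper likewise expresses $\mathcal N$ in the coordinates $(r,t)$, uses the implicit relation $|t|=f(r;R(r,t))$ (via the derivatives $R_r=-f_r/f_R$, $R_t=\mathrm{sgn}(t)/f_R$ of \eqref{R_t}) together with the explicit formulas \eqref{f_r}, \eqref{f_R}, \eqref{p_r} for $f_r$, $f_R$, $p_r$, $p_R$, and the chain rule to obtain \eqref{NR} and then \eqref{Np}, treating $t<0$ as analogous. Your reorganization (computing $\mathcal N r$, $\mathcal N t$ first and writing $\mathcal N p=p_r\,\mathcal N r+p_R\,\mathcal N R$ with the already-established $\mathcal N R=\ell(p)/\epsilon$) is only a cosmetic variant of the same computation, and the algebra indeed closes to the stated formulas.
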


\begin{proof} We start from  
the following expression for the unit normal 
(in the coordinates $(x,y,t)$):
\begin{equation*}
\label{ENNEradiale}
\mathcal N = \frac 1 R \Big\{ \frac r \epsilon  \partial _r +\frac p\epsilon
(y\partial_ x-x\partial_y) +\mathrm{sgn}(t) \epsilon ^2 \omega(r) \sqrt{R^2-r^2}
\partial_t\Big\}.
\end{equation*}
We just consider the case $t\geq 0$. Using \eqref{R_t}, we obtain
\[
\begin{split}
\mathcal N R & = \frac 1 R \Big\{
\frac r \epsilon R_r +\epsilon ^ 2 \omega(r) \sqrt{R^2-r^2} R_t\Big\}
=\frac {1 }{R f_R} \Big\{\epsilon ^ 2 \omega(r) \sqrt{R^2-r^2}  
- \frac r \epsilon  {f_r}  \Big\}.
\end{split}
\]
Inserting into this formula the expression in  
 \eqref{f_r} for $f_r$ we get
\[
\mathcal N R = \frac{\e^2 R \omega(r)}{f_R \sqrt{R^2-r^2}},
\]
and using formula \eqref{f_R} for $f_R$, namely,  
\[
 f_R =\tau \epsilon ^ 4 R\Big[ \arctan (p)+\frac 1 p \Big]
 =\frac{\tau \epsilon ^ 4   R}{p \ell(p)},
\]
we   obtain formula \eqref{NR}.

To compute the derivatives of $p$ in $r$ and $t$, we have to consider $p=p(r;R)$ and $R
= R(r,t)$. Using  
the formula in \eqref{abcp} for $p$ and the expression
\eqref{R_t} for $R_r$ yields  
\[ 
 p_r =-\frac{\tau\epsilon r \omega(R)^2}{\omega(r)^3 \sqrt{R^2-r^2}},\quad
 p_R =\frac{\tau\epsilon R}{\omega(r)\sqrt{R^2-r^2}},\quad
 R_r = -\frac{f_r}{f_R} = \frac{\epsilon^3 r\omega(r)} {\sqrt{R^2-r^2} f_R},
\]
   and thus
\[
\begin{split}
\frac{\partial}{\partial r} p(r,R(r,t)) & 
= p_r (r,R(r,t))+p_R (r,R(r,t))R_r(r,t)
\\&
= 
\frac{\tau\epsilon r }{\omega(r) ^ 3 \sqrt{R^2-r^2} }\big[
\omega(r)^2 \ell(p) -\omega(R)^2\big].
\end{split}
\]
Similarly, we compute
\[
\frac{\partial}{\partial t}  p(r;R(r,t)) = p_R(r;R(r,t))  R_t(r,t)= \frac{\tau
\ell(p)}{\epsilon ^2\omega(r)^2}.
\]
The derivative of $p$ along $\mathcal N$ is thus
as in \eqref{Np}, when $t\geq 0$. The case $t<0$ is analogous. 

\end{proof}

In the next lemma, we compute the covariant derivative
$\nabla\!\! _{\mathcal N}\mathcal N $. The resulting vector field
in $H^1_*$ is tangent to each CMC sphere $\Sigma_R$, for any $R>0$.

\begin{lemma} At any point in $(z,t)\in H^1_*$ we have 
\begin{equation}
\label{nablaENNE}
\nabla\!\! _{\mathcal N}\mathcal N (z,t)= \mathcal N\Big(\frac p R\Big)
 \Big[ (y+x\Phi)X-(x-y\Phi ) Y+\frac{1}{\tau\epsilon} T\Big],
\end{equation}
where $\Phi=\Phi(r;R)$ is the function defined as
\[
\Phi = - \frac{\omega(r) ^2 p }{\tau^2\epsilon^2 r^2},
\]
and the derivative $\mathcal N(p/R)$ is given by 
\[
 \mathcal N\Big(\frac p R\Big) = - \frac{\epsilon \tau^2 r^2 \big( \omega(R)^2 -\ell(p)
\omega(r)^2\big)}{R^2 \omega(r)^4 p},
\]
with $\ell$ as in \eqref{ell}.
\end{lemma}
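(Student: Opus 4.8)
The plan is to compute $\nabla_{\mathcal N}\mathcal N$ directly from the closed form \eqref{ENNE} of the unit normal, which we write as $\mathcal N=\tfrac1R\big(AX+BY+CT\big)$ with $A=x+yp$, $B=y-xp$ and $C=p/(\tau\epsilon)$. Expanding by the Leibniz rule splits the covariant derivative into a \emph{frame part} $\tfrac1R\big(A\,\nabla_{\mathcal N}X+B\,\nabla_{\mathcal N}Y+C\,\nabla_{\mathcal N}T\big)$ and a \emph{coefficient part} $\mathcal N(A/R)\,X+\mathcal N(B/R)\,Y+\mathcal N(C/R)\,T$. I will evaluate both pieces and show their sum collapses to a scalar multiple of the single vector $(y+x\Phi)X-(x-y\Phi)Y+\tfrac1{\tau\epsilon}T$; this collapse is the real content of the lemma. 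I carry out the computation at points with $r=|z|>0$; on the $t$-axis the integral line of $\mathcal N$ is the axis itself, which is a Riemannian geodesic because $\nabla_TT=0$ by \eqref{FR}, so there $\nabla_{\mathcal N}\mathcal N=0$ and the formula holds trivially. I also keep the sign conventions for $p$ (and for the auxiliary function $b$) as in the proof of Theorem \ref{3.1}.

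For the frame part, inserting $\mathcal N=\tfrac1R(AX+BY+CT)$ into the connection table \eqref{FR} gives $\nabla_{\mathcal N}X=\tfrac{\tau}{R}(BT+CY)$, $\nabla_{\mathcal N}Y=\tfrac{\tau}{R}(-AT-CX)$ and $\nabla_{\mathcal N}T=\tfrac{\tau}{R}(AY-BX)$. Substituting these, the $T$-components cancel identically (their coefficient is $AB-AB$), and the frame part reduces to $\tfrac{2\tau C}{R^2}\big(-BX+AY\big)=\tfrac{2p}{\epsilon R^2}\big(-(y-xp)X+(x+yp)Y\big)$. In particular it contributes nothing in the $T$-direction, so the $T$-component of $\nabla_{\mathcal N}\mathcal N$ will be governed entirely by $\mathcal N(C/R)$.

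For the coefficient part I need $\mathcal N$ applied to $1/R$, $x$, $y$ and $p/R$. From \eqref{ENNE} and \eqref{XYT} one reads off $\mathcal N x=(x+yp)/(\epsilon R)$ and $\mathcal N y=(y-xp)/(\epsilon R)$, while $\mathcal N(1/R)=-\mathcal N R/R^2=-\ell(p)/(\epsilon R^2)$ by \eqref{NR} and $\mathcal N(C/R)=\tfrac1{\tau\epsilon}\mathcal N(p/R)$ directly. Differentiating $A/R=x\cdot\tfrac1R+y\cdot\tfrac pR$ and $B/R=y\cdot\tfrac1R-x\cdot\tfrac pR$ and using these, a short computation gives $\mathcal N(A/R)=\tfrac1{\epsilon R^2}\big(x(1-p^2-\ell(p))+2py\big)+y\,\mathcal N(p/R)$ and $\mathcal N(B/R)=\tfrac1{\epsilon R^2}\big(y(1-p^2-\ell(p))-2px\big)-x\,\mathcal N(p/R)$. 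Adding the frame and coefficient contributions, the $2py$ and $-2px$ terms cancel against the frame part, and one is left with $X$-component $\tfrac{x(1+p^2-\ell(p))}{\epsilon R^2}+y\,\mathcal N(p/R)$, $Y$-component $\tfrac{y(1+p^2-\ell(p))}{\epsilon R^2}-x\,\mathcal N(p/R)$, and $T$-component $\tfrac1{\tau\epsilon}\mathcal N(p/R)$, the last already in the desired form.

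It remains to identify $\tfrac{1+p^2-\ell(p)}{\epsilon R^2}$ with $\Phi\,\mathcal N(p/R)$, and this is also where the stated formula for $\mathcal N(p/R)$ gets verified. From $\mathcal N(p/R)=\tfrac{\mathcal N p}{R}-\tfrac{p\,\mathcal N R}{R^2}$ together with \eqref{NR}, \eqref{Np} and the elementary identity $p^2\omega(r)^2=\tau^2\epsilon^2(R^2-r^2)$, the numerator collapses to $\epsilon^2\tau^2 r^2\big(\ell(p)\omega(r)^2-\omega(R)^2\big)$, giving $\mathcal N(p/R)=-\tfrac{\epsilon\tau^2 r^2(\omega(R)^2-\ell(p)\omega(r)^2)}{R^2\omega(r)^4 p}$; multiplying by $\Phi=-\omega(r)^2 p/(\tau^2\epsilon^2 r^2)$ yields $\tfrac{\omega(R)^2-\ell(p)\omega(r)^2}{\epsilon R^2\omega(r)^2}$, which equals $\tfrac{1+p^2-\ell(p)}{\epsilon R^2}$ precisely because $1+p^2=\omega(R)^2/\omega(r)^2$ — the identity already noted in the proof of Theorem \ref{3.1}. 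Hence the $X$- and $Y$-components become $\mathcal N(p/R)(y+x\Phi)$ and $-\mathcal N(p/R)(x-y\Phi)$, which is \eqref{nablaENNE}. The main obstacle here is organizational rather than conceptual: one must carry three vector components through two separate Leibniz expansions, keep track of the exact cancellations (the $T$-component of the frame part and the mixed $2py$, $-2px$ terms), and spot the two algebraic simplifications — the collapse of the numerator of $\mathcal N(p/R)$ and the relation $1+p^2=\omega(R)^2/\omega(r)^2$ — that force everything into a single scalar multiple of one vector; checking that the sign choices for $p$ are handled consistently in the southern hemisphere is a minor point, and the final identity is manifestly invariant under $t\mapsto-t$.
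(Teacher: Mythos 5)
Your proposal is correct and follows essentially the same route as the paper's proof: a Leibniz expansion of $\nabla_{\mathcal N}\mathcal N$ from the explicit formula \eqref{ENNE}, the connection table \eqref{FR}, the derivatives $\mathcal N x$, $\mathcal N y$ together with \eqref{NR}--\eqref{Np}, and the final simplification via $1+p^2=\omega(R)^2/\omega(r)^2$. The only difference is cosmetic: you differentiate the quotients $A/R$, $B/R$, $p/R$ directly (using $\mathcal N(1/R)$), while the paper keeps the combination $R\mathcal N p-p\mathcal N R$ and divides by $R^2$, which is the same algebra.
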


\begin{proof}
Starting from formula \eqref{ENNE} for $\mathcal N$, we find that
\begin{equation}\label{G0}
\begin{split}
\nabla\!\! _{\mathcal N}\mathcal N &
=\mathcal N\Big(\frac{x+yp}{R}\Big) X+\mathcal N\Big(\frac{y-xp }{R}\Big)Y
+\mathcal N\Big(\frac{p}{\tau \epsilon R}\Big)T
\\
&+\frac 1 R \Big((x+yp)
\nabla\!\! _{\mathcal N} X+
(y-xp)
\nabla\!\! _{\mathcal N} Y+
\frac{p}{\tau\epsilon}
\nabla\!\! _{\mathcal N} T\Big),
\end{split}
\end{equation}
where, by the fundamental relations \eqref{FR}, we have
\begin{equation}\label{G1}
(x+yp)
\nabla\!\! _{\mathcal N} X+
(y-xp)
\nabla\!\! _{\mathcal N} Y+
\frac{p}{\tau\epsilon}
\nabla\!\! _{\mathcal N} T=\frac{2p}{\epsilon R} \Big(-(y-xp)X +(x+yp)Y\Big).
\end{equation}

>From the elementary formulas
\[
  \mathcal N x = \frac{1}{R\epsilon} (x+yp)
  \quad \text{and} \quad
  \mathcal N y =\frac{1}{R\epsilon} (y-xp),
\]
we find 
\begin{equation}\label{G2}
\begin{split}
\mathcal N (x+yp) &  = \frac{1}{\epsilon R} 
\big(x(1-p^2) +2yp \big) +y\mathcal N p,
\\
\mathcal N (y-xp) & = \frac{1}{\epsilon R} 
\big(y(1-p^2) -2xp \big) -x\mathcal N p.
\end{split}
\end{equation}
Inserting \eqref{G1} and \eqref{G2} into \eqref{G0} we obtain
the following expression  
\begin{equation}\label{G3}
\begin{split}
\nabla\!\! _{\mathcal N}\mathcal N 
=\frac{1}{R^2} \Big[
&\Big\{ x ( \epsilon^{-1} (1+p^2) -\mathcal NR)  
+ y (R\mathcal N p -p\mathcal N R)  \Big\} X
\\
+&
\Big\{ y ( \epsilon^{-1} (1+p^2) -\mathcal NR)  
-x (R\mathcal N p -p\mathcal N R)  \Big\} Y
\\
+&\frac{1}{\tau\epsilon} 
(R\mathcal N p -p\mathcal N R)  T\Big]  .
 \end{split}
 \end{equation}

>From \eqref{NR} and \eqref{Np} we compute
\[
R\mathcal N p -p\mathcal N R = - \frac{\epsilon \tau^2 r^ 2}{\omega(r)^4
p}\big[ \omega(R)^2 -\ell(p) \omega(r)^2\big].
\]
Inserting this formula into \eqref{G3} and using $1+p^2
=\omega(R)^2/\omega(r)^2$ yields the claim.
\end{proof}

Let $\mathcal N\in \Gamma(T H^1 _*)$ be the exterior unit normal to the family
of CMC spheres
$\Sigma_R$ centered at $0\in H^ 1$.  
The vector field $\nabla \! _{\mathcal N}\mathcal N$ is  tangent to $\Sigma_R$ for any $R>0$, and
for $(z,t)\in\Sigma_R$  we have 
\begin{center}
$ \nabla\! _{\mathcal N}\mathcal N(z,t) =0\quad$ if and 
only if $\quad z=0\,$ or $\, t=0$. 
\end{center}
 However, it can be checked that the normalized vector field
\[
\mathcal M (z,t) = \mathrm{sgn}(t) \frac{\nabla\! _{\mathcal N}\mathcal N }
{|\nabla\! _{\mathcal
N}\mathcal N |}\in \Gamma( T\Sigma_R^*)
 \]
is  smoothly  defined also at points $(z,t)\in \Sigma_R$ at the equator, 
where $t=0$.
We denote by $\nabla^{\Sigma_R}$ the restriction of  the Levi-Civita connection
$\nabla$ to $\Sigma_R$.

\begin{theorem} \label{4.1}  Let $\Sigma _R \subset H^1$ be the CMC sphere with
mean curvature
$H>0$.  Then the vector field $\nabla\! _{\mathcal M}\mathcal M $ is smoothly
defined on   $\Sigma_R$ and   for any $(z,t)\in \Sigma_R$ we have
\begin{equation} \label{espo}
\nabla\!_{\mathcal M}\mathcal M (z,t) = - \frac{H}{ \omega(r)^ 2}\mathcal N.
\end{equation}
In particular,   $\nabla^{\Sigma_R} _{\mathcal M}\mathcal M =0$ and the
integral curves of $\mathcal M$ are Riemannian geodesics of $\Sigma_R$ joining
the north pole
$N$ to the south pole $S$.

\end{theorem}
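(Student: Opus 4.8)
The strategy is to reduce the Riemannian statement $\nabla^{\Sigma_R}_{\mathcal M}\mathcal M = 0$ to the identity \eqref{espo}, which isolates the normal component of $\nabla_{\mathcal M}\mathcal M$ in $H^1$. Indeed, once we know $\nabla_{\mathcal M}\mathcal M$ is a multiple of $\mathcal N$, its tangential part to $\Sigma_R$ vanishes, which by definition of $\nabla^{\Sigma_R}$ (the Gauss formula, $\nabla_X Y = \nabla^{\Sigma_R}_X Y + h(X,Y)\mathcal N$) gives $\nabla^{\Sigma_R}_{\mathcal M}\mathcal M = 0$. Since $\mathcal M$ is a unit field, $\nabla^{\Sigma_R}_{\mathcal M}\mathcal M = 0$ says precisely that its integral curves are unit-speed geodesics of $\Sigma_R$; and since $\mathcal M = \mathrm{sgn}(t)\,\nabla_{\mathcal N}\mathcal N/|\nabla_{\mathcal N}\mathcal N|$ is proportional to $\nabla_{\mathcal N}\mathcal N$ away from the equator — a field we computed in \eqref{nablaENNE} to lie in the plane spanned by $(y+x\Phi)X - (x-y\Phi)Y$ and $T$ — one checks these curves emanate from the poles, where $\mathcal N$ is vertical, and join $N$ to $S$.

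\textbf{Main computation.} The heart of the proof is establishing \eqref{espo}. I would work in the frame $X_1, X_2$ on $\Sigma_R^*$ from \eqref{xixo} and the explicit formula \eqref{nablaENNE} for $\nabla_{\mathcal N}\mathcal N$. Write $\nabla_{\mathcal N}\mathcal N = \lambda\, W$ where $W = (y+x\Phi)X - (x-y\Phi)Y + (\tau\epsilon)^{-1}T$ and $\lambda = \mathcal N(p/R)$; then $|\nabla_{\mathcal N}\mathcal N| = |\lambda|\,|W|$, and $\mathcal M = \mathrm{sgn}(t)\,\mathrm{sgn}(\lambda)\, W/|W|$. Since $\mathcal M$ is tangent to $\Sigma_R$ and has unit length, $\nabla_{\mathcal M}\mathcal M$ decomposes as $\nabla^{\Sigma_R}_{\mathcal M}\mathcal M + h(\mathcal M, \mathcal M)\mathcal N$, so it suffices to show the tangential part is zero. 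One route: expand $\mathcal M = \cos\gamma\, X_1 + \sin\gamma\, X_2$ for an appropriate angle function $\gamma = \gamma(r;R)$ read off from \eqref{xixo} and the expression for $W$, then compute $\nabla_{\mathcal M}\mathcal M$ using the fundamental relations \eqref{FR} together with the derivatives of $\mathcal M$ along itself — crucially using that $\mathcal M$ is tangent to the level curves $\{R = \text{const}\}$ and that along an integral curve of $\mathcal M$ the radial coordinate $r$ and vertical coordinate $t$ both vary, while $R$ stays fixed. Tracking the $X$, $Y$, and $T$ components and invoking $\epsilon H R = 1$, $\varrho = \tau\epsilon r$, and $1 + p^2 = \omega(R)^2/\omega(r)^2$, the tangential components should cancel and the $\mathcal N$-component should collapse to $-H/\omega(r)^2$. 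An alternative, perhaps cleaner, route is to differentiate the relation $\langle \mathcal M, \mathcal N\rangle = 0$ along $\mathcal M$: this gives $\langle \nabla_{\mathcal M}\mathcal M, \mathcal N\rangle = -\langle \mathcal M, \nabla_{\mathcal M}\mathcal N\rangle = -h(\mathcal M, \mathcal M)$, so \eqref{espo} is equivalent to the two assertions that (i) the tangential part of $\nabla_{\mathcal M}\mathcal M$ vanishes and (ii) $h(\mathcal M,\mathcal M) = H/\omega(r)^2 = H/(1+\varrho^2)$. Assertion (ii) is immediate from Theorem \ref{3.1}: writing $\mathcal M$ in the $X_1,X_2$ frame, $h(\mathcal M,\mathcal M) = h_{11}\cos^2\gamma + 2h_{12}\cos\gamma\sin\gamma + h_{22}\sin^2\gamma$, and substituting the entries of \eqref{ACCA} together with the specific value of $\gamma$ coming from \eqref{nablaENNE} gives exactly $H/(1+\varrho^2)$ — this is a short algebraic check and a good internal consistency test.

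\textbf{Expected obstacle.} The delicate point is assertion (i): showing the tangential component of $\nabla_{\mathcal M}\mathcal M$ vanishes. This is genuinely a computation with the Levi-Civita connection \eqref{FR} applied to a vector field $W$ whose coefficients depend on $x,y$ through both $r$ and the function $\Phi(r;R)$, and one must carefully use that $R$ is constant along the flow of $\mathcal M$ (equivalently, $\mathcal M R = 0$, which holds because $\mathcal M$ is tangent to $\Sigma_R$) and that $\mathcal N R = \ell(p)/\epsilon \neq 0$. A clean way to organize it is to note $\mathcal M$ is a linear combination of $X_1$ and $X_2$ with coefficients constant along $\mathcal M$'s own flow — or better, to exploit that $\{X_1, X_2, \mathcal N\}$ is an orthonormal frame and compute all nine Christoffel-type coefficients $\langle \nabla_{X_i} X_j, X_k\rangle$ once and for all from \eqref{FR} and \eqref{xixo}, then assemble $\nabla_{\mathcal M}\mathcal M$ symbolically. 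I expect the cancellation to hinge on the precise form of $\beta_H$ — or rather on the companion angle determined by $W$ — interacting with the off-diagonal term $\tau\varrho^2/(1+\varrho^2)$ of $h$, mirroring the rotation-by-constant-angle phenomenon of Theorem \ref{3.1}; once the bookkeeping is set up correctly the algebra should close using only $\epsilon H R = 1$ and $1 + p^2 = \omega(R)^2/\omega(r)^2$. Finally, smoothness of $\nabla_{\mathcal M}\mathcal M$ across the equator $\{t = 0\}$ and the poles follows from \eqref{espo}, since the right-hand side $-H\mathcal N/\omega(r)^2$ is manifestly smooth on all of $\Sigma_R$ (recall $\mathcal N$ is smooth up to the poles by the remark after \eqref{ENNE}), even though the defining expression for $\mathcal M$ as a normalized gradient degenerates there.
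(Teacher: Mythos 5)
Your reduction is sound, and the part you did check is correct: decomposing $\nabla_{\mathcal M}\mathcal M$ into tangential and normal parts, the normal component is $-h(\mathcal M,\mathcal M)\mathcal N$, and writing $\mathcal M=\cos\gamma\,X_1+\sin\gamma\,X_2$ one finds from \eqref{C0} and \eqref{xixo} that $\cos\gamma=\tau\epsilon R/\omega(R)=\tau/\sqrt{H^2+\tau^2}$ and $\sin\gamma=-1/\omega(R)=-H/\sqrt{H^2+\tau^2}$ are \emph{constant} on each hemisphere; plugging into \eqref{ACCA} then gives $h(\mathcal M,\mathcal M)=H/(1+\varrho^2)=H/\omega(r)^2$, exactly the coefficient in \eqref{espo}. (Your statement of the Gauss formula with $+h(X,Y)\mathcal N$ has the wrong sign relative to the paper's convention $h(V,W)=\langle\nabla_V\enn,W\rangle$, but your later derivation $\langle\nabla_{\mathcal M}\mathcal M,\mathcal N\rangle=-h(\mathcal M,\mathcal M)$ is the correct one, so this is only a slip of exposition.)

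The genuine gap is your assertion (i): the vanishing of the tangential component of $\nabla_{\mathcal M}\mathcal M$ is precisely the geodesic statement and the main content of the theorem, and you never carry it out — you only describe a bookkeeping scheme and say you ``expect the cancellation.'' Nothing in your argument forces it: knowing $h(\mathcal M,\mathcal M)$ and knowing $\gamma$ is constant does not by itself kill the terms $\langle\nabla_{\mathcal M}X_1,X_2\rangle$, which must be computed from \eqref{FR} and \eqref{xixo} (or, as the paper does, by expanding $\nabla_{\mathcal M}\mathcal M$ directly in the left-invariant frame from \eqref{C0}, arriving at an expression proportional to \eqref{ENNE}, which yields the tangential vanishing and the normal coefficient in one stroke). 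So as submitted this is a correct and workable plan with a useful consistency check on the normal part, but the decisive computation is missing; note also that the constancy of $\gamma$, which you only hint at, is the fact that would make your frame-based route genuinely cleaner than the paper's, since then $\nabla^{\Sigma_R}_{\mathcal M}\mathcal M=\cos\gamma\,\nabla^{\Sigma_R}_{\mathcal M}X_1+\sin\gamma\,\nabla^{\Sigma_R}_{\mathcal M}X_2$ and only the single Christoffel-type coefficient $\langle\nabla_{\mathcal M}X_1,X_2\rangle$ needs to be shown to vanish.
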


\begin{proof}
From \eqref{nablaENNE}   we obtain the following formula for $\mathcal M$:
\begin{equation}\label{C0}
\mathcal M = (x\lambda-y\mu) X + (y\lambda+x \mu) Y -\frac{\mu}{\tau\epsilon} T,
\end{equation}
where $\lambda,\mu:\Sigma_R^* \to \R$ are the functions
\begin{equation}
\label{C}
\lambda =
\lambda(r)=\pm \frac{\sqrt{R^2-r^2}}{rR}\quad \text{and}\quad
\mu = \mu(r)=\frac{\tau\epsilon r }{R \omega(r)}, 
\end{equation}  
with $r=|z|$ and $R=1/(\epsilon H)$.   
The functions $\lambda$ and $\mu$ are radially symmetric in $z$.
In defining $\lambda$ we choose the sign $+$, when $t\geq 0$, and the sign $-$,
when $t<0$.
In the coordinates $(x,y,t)$, the vector field $\mathcal M$ has the following
expression  
\begin{equation}\label{star}
\mathcal M =\frac 1 \epsilon \Big( \lambda r \partial _r+\mu
(x\partial_y-y\partial_x) -
\mu \frac{\epsilon^2\omega(r)^2}{\tau} \partial _t\Big),
\end {equation}
where $r\partial _r =x\partial_x+y\partial_y$, and so we have
\begin{equation}\label{phil}
  \begin{split}
\nabla\!_{\mathcal M}\mathcal M 
= &  (x\lambda-y\mu) \nabla\!_{\mathcal M} X 
+
(y\lambda+x\mu) \nabla\!_{\mathcal M} Y-\frac{\mu}{\tau\epsilon}
\nabla\!_{\mathcal M} T
\\
&
+ \mathcal M (x\lambda-y\mu) X
+ \mathcal M (y\lambda+x\mu) Y
- \mathcal M \Big( \frac{\mu}{\tau\epsilon} \Big) T.
  \end{split}
\end{equation}
  Using \eqref{star}, we compute
\begin{equation}\label{1}
  \mathcal M x = \frac1\epsilon (x\lambda-y\mu) \quad \text{and}\quad
  \mathcal M y = \frac1\epsilon (y\lambda+x\mu),
\end{equation}
and so we find
\begin{equation}\label{2}
\begin{split}
\mathcal M (x\lambda-y\mu) &= \frac 1 \epsilon (x\lambda-y\mu)\lambda  +
x\mathcal M \lambda -\frac1\epsilon (y\lambda+x\mu)\mu  -y\mathcal M \mu,
\\
\mathcal M (y\lambda+x\mu) &= \frac 1 \epsilon (y\lambda+x\mu)\lambda  +
y\mathcal M \lambda +\frac1\epsilon (x\lambda-y\mu)\mu  +x\mathcal M \mu.
\end{split}
\end{equation}
Now, inserting \eqref{1} and \eqref{2} into \eqref{phil}, we get
\begin{equation*}
\label{3}
\begin{split}
\nabla\!_{\mathcal M}\mathcal M = & \Big( \frac x \epsilon
(\lambda^2+\mu^2)+x\mathcal M \lambda-y\mathcal M\mu\Big) X 
\\
  +& 
\Big( \frac y \epsilon (\lambda^2+\mu^2) +y\mathcal M \lambda +x\mathcal
M\mu\Big)Y
-\frac{1}{\tau\epsilon} \mathcal M \mu T.
\end{split}
\end{equation*}

The next computations are for the case $t\geq 0$.
Again from \eqref{star}, we get
\begin{equation}\label{B}
  \mathcal M \lambda  = \frac{\lambda r}{\epsilon} \partial_r\lambda = -
\frac{R\lambda}{\epsilon r\sqrt{R^2-r^2}},
\quad \text{and}\quad
  \mathcal M \mu = \frac{\lambda r}{\epsilon}
 \partial _r \mu=
\frac{\tau r \lambda}{R \omega(r)^3}.
\end{equation}
From \eqref{C} and \eqref{B} we have 
\[
\frac{1}{\epsilon} (\lambda^2+\mu^2)+\mathcal M\lambda =
-\frac{1}{\epsilon R^2 \omega(r)^2},
\] and so we finally obtain
\begin{equation}
\label{NUM1}
\nabla\!_{\mathcal M}\mathcal M =(x\Lambda-y\Mu) X +(y\Lambda +x\Mu) Y
-\frac{\Mu}{\tau\epsilon} T,
\end{equation}
where we have set 
\begin{equation}
\label{NUM2}
  \Lambda = -\frac{1}{\epsilon R^2 \omega(r)^2},\qquad
  \Mu =\tau
\frac{\sqrt{R^2-r^2}}{R^2\omega(r)^3}.
\end{equation}
Comparing with \eqref{ENNE}, we deduce that 
\[
\nabla\!_{\mathcal M}\mathcal M = - \frac{1}{\epsilon R\omega(r)^ 2}\mathcal N.
\]
The claim $\nabla^{\Sigma_R} _{\mathcal M}\mathcal M =0$ easily
 follows from the last formula.

\end{proof}

\begin{figure}[h!] 
\includegraphics[scale=0.58]{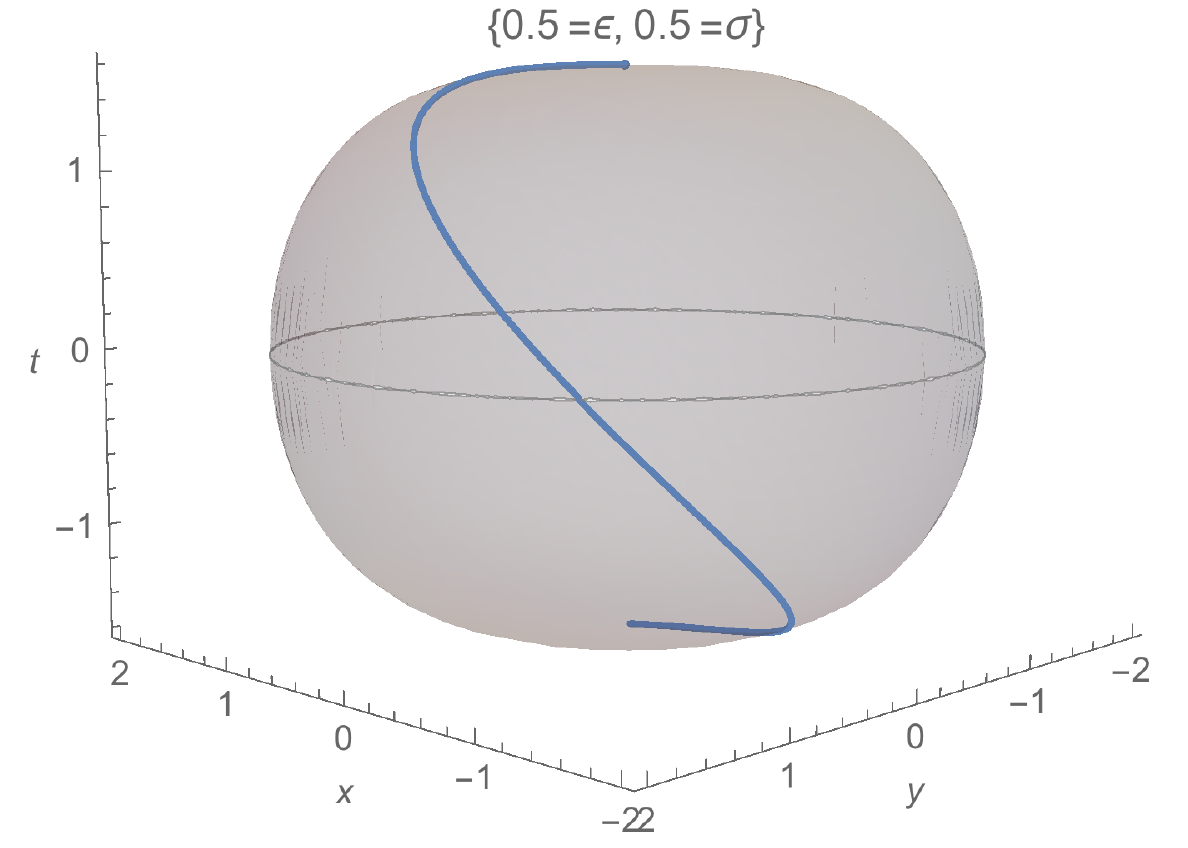} 
\caption{The plotted curve is  an integral curve of the vector field
$\mathcal M$ for
$R=2$,  $\varepsilon=0.5$, and $\sigma=0.5$.}
\end{figure}

\begin{remark} We compute the pointwise limit of $\mathcal M$ 
 in \eqref{C0} when $\sigma\to0$, for $t\geq 0$. In the southern hemisphere
the situation is analogous. By \eqref{star}, the vector field $\mathcal M$  is
 given by    
\[
\begin{split}
 \mathcal M =  \frac {1}{ \epsilon R}  \Big( 
\frac{\sqrt{R^2-r^2}}{r} (x \partial_ x  +y  \partial_y )
+  {\frac{\sigma r}{\sqrt{\epsilon^6+
\sigma^2 r ^2}}}  ( x \partial_y  -y \partial_x ) 
 - r \sqrt{\epsilon^6+ \sigma ^2r ^2}  {\partial_t}\Big) .
\end{split}
\]
With  $\epsilon=1$ we have  
\[
 \widehat{\mathcal M}
   =\lim_{\sigma  \to 0 } \mathcal M= 
\frac{\sqrt{R^2-r^2}}{rR} (x \partial_ x  +y  \partial_y ) 
-\frac r R {\partial_t}.
\]
Clearly, the vector field $\widehat {\mathcal M}$ is tangent to the round sphere of
radius
$R>0$ in the three-dimensional  Euclidean space and its integral lines turn out
to be the meridians from the
north to the south pole.
  \end{remark}

\begin{remark}\label{PAN}
We study the limit of $\epsilon  {\mathcal M}$ when $\epsilon
\to 0$, in the northern hemisphere.

The
frame of left-invariant vector fields $\bar X = \epsilon X$, $\bar Y = \epsilon
Y$ and $\bar T =  \epsilon^{-2} T$ is independent of $\epsilon$. 
Moreover, 
 the linear
connection $\nabla$ restricted to the horizontal distribution spanned by $\bar X$ and $\bar Y$ is
independent of the parameter $\epsilon$. Indeed, from the fundamental relations
\eqref{FR} and from \eqref{tau-sigma} we find
\[
\begin{split}
 & \nabla_{\bar X}\bar X = \nabla_{\bar Y}\bar Y = 0,
\\
&
 \nabla_{\bar X} \bar Y = -\sigma \bar T\quad \textrm{and}\quad 
  \nabla_{\bar Y} \bar X = \sigma \bar T.
\end{split}
\]

Now, it turns out that 
 \[
 \begin{split}
 \bar{\mathcal M} = \lim_{\epsilon \to 0} \epsilon \mathcal M
&  = \frac 1 R \Big[ \Big(x\frac{\sqrt{R^2-r^2}}{r}-y\Big) \partial _ x
+\Big(y\frac{\sqrt{R^2-r^2}}{r} +x\Big) \partial _y - \sigma r^2\partial_t\Big]
\\
&
= (x\bar\lambda -y \bar \mu) \bar X + (y\bar \lambda+x\bar \mu) \bar Y,
 \end{split}
\]
where
\[
 \bar \lambda = \lambda = \frac{\sqrt{R^2-r^2}}{rR},
\qquad  
\bar\mu = \frac 1 R.
\]
The vector field $\bar {\mathcal M}$ is horizontal  and  tangent to the
Pansu's sphere.

We denote by   $J$  the complex structure  $J(\bar X) = \bar Y$ and
$J(\bar Y)
=- \bar X$.
A computation similar to the one in the proof of Theorem \ref{4.1}
shows that  
\begin{equation} \label{check}
 \nabla\! _{\bar{\mathcal M}} \bar {\mathcal M} =    \frac 2 R J (\bar {
\mathcal
M}).
\end{equation}
This is the equation for Carnot-Carath\'eodory geodesics in $H^1$ for the
sub-Riemannian metric making $\bar X$ and $\bar Y$ orthonormal, see
\cite[Proposition 3.1]{RR}. 

Thus, we reached the following conclusion. The 
  integral curves of $\mathcal M$ are Riemannian geodesics of $\Sigma_R$ and
converge
to the integral curves of $\bar{\mathcal M}$. These curves  foliate the Pansu's sphere
and are 
Carnot-Carath\'eodory geodesics (not only of the Pansu's sphere but also) of
$H^1$.

Using \eqref{check} we can pass to the limit as $\epsilon \to 0$ in
equation \eqref{espo}, properly
scaled.    An inspection of the right hand side in  \eqref{NUM1}  shows that
the right hand side of \eqref{espo} is asymptotic to $\epsilon ^4$.
In fact, starting  from \eqref{NUM2} 
we get
\begin{equation} \label{pipox}
  - \lim_{\epsilon\to0} \frac{H }{ \epsilon^{4}\omega (r)^2} \mathcal N = 
 \frac{1}{R\sigma^2 r^2} \big[- ( x\bar \mu +y\bar \lambda ) \bar X +(
x\bar\lambda -y\bar\mu ) \bar Y \big]  =  \frac{1}{R\sigma^2 r^2}
J(\bar{\mathcal M}).
\end{equation}
From \eqref{espo}, \eqref{check}, and \eqref{pipox} we deduce that
\[
  \lim_{\epsilon\to0} \epsilon^{-4} \nabla \!_{\mathcal M}\mathcal M =
\frac{1}{2\sigma^2 r^2}  
 \nabla\! _{\bar{\mathcal M}} \bar {\mathcal M}.
\]

\end{remark}

\section{Topological CMC spheres are left translations of $\Sigma_R$}

\label{k_0=0}
\setcounter{equation}{0}

 In this section, we prove that   any topological sphere  in $H^1$
having constant mean curvature is congruent to a sphere $\Sigma_R$ for some
$R>0$. This result was announced, in wider generality, in \cite{A5}. As in
\cite{AR}, our proof relies
on the identification of a holomorphic quadratic  differential for CMC
surfaces in $H^1$.

For an oriented  surface  $\Sigma$ in $H^1$ with unit normal vector $\enn$, we
denote by
$h\in \Hom(T_p\Sigma;T_p\Sigma)$  
the shape operator $h(W) = \nabla_W\enn$, at any point $p\in\Sigma$. 
The $1$-form $\vartheta$ in
$H^1$, defined by $\vartheta(W) = \langle W,T\rangle$ for $W\in \Gamma(T H^1)$, can be
restricted to the tangent bundle $T\Sigma$. The tensor product
$\vartheta\otimes\vartheta \in \Hom(T_p\Sigma; T_p\Sigma)$ is defined, as a linear
operator, by the
formula
\[
 (\vartheta\otimes\vartheta) (W) = \vartheta(W) (\vartheta(X_1) X_1+\vartheta(X_2)
X_2),\qquad W\in \Gamma( T\Sigma),
\]
where  $X_1,X_2$ is any (local) orthonormal frame of $T\Sigma$. Finally, for any
$H\in\R$ with $H\neq0$, let $\alpha_H\in(-\pi/4,\pi/4)$ be the angle
\begin{equation}
\label{alpha_H}
 \alpha_H= \frac 12 \arctan\Big(\frac{\tau}{H}\Big),
\end{equation}
and  let $q_H \in\Hom(T _p\Sigma; T_p \Sigma)$ be the (counterclockwise)
rotation by the angle
$\alpha_H$ of each tangent plane $T_p\Sigma$ with $p\in\Sigma$.

 \begin{definition}
  Let $\Sigma$ be an (immersed) surface in $H^1$ with constant mean curvature
$H\neq 0$. At any point $p\in\Sigma$,  we define the linear operator  $k\in
\Hom(T_p\Sigma; T_p\Sigma)$ 
by
\begin{equation} \label{ABR}
   k = h + \frac{2\tau^2}{\sqrt{H^2+\tau^2}} q_H \circ 
(\vartheta\otimes\vartheta)\circ q_H^{-1}.
\end{equation}
 \end{definition}

The operator $k$ is symmetric, i.e., $\langle k(V),W\rangle = \langle
V,k(W)\rangle$.
The trace-free part of $k$ is 
$ k _ 0 = k -\frac 12 \mathrm{tr}(k)\mathrm{Id}$. In  fact, we have 
\begin{equation} \label{b_0}
 k_ 0 = h_0 + \frac{2\tau^2}{\sqrt{H^2+\tau^2}} q_H  \circ 
(\vartheta\otimes\vartheta)_0\circ q_H^{-1}.
\end{equation}
Formula \eqref{ABR} is analogous to the formula for the quadratic holomorphic
differential discovered in \cite{AR}.

In the following, we identify the linear operators 
$h,k,\vartheta\otimes\vartheta$ with the corresponding bilinear forms
$(V,W)\mapsto h(V,W) = \langle h(V),W\rangle$, and so on.

The structure of $k$ in \eqref{ABR} can be established
in the following way. 
Let $\Sigma_R$ be the CMC sphere with $R = 1/ \varepsilon H$. 
From the formula \eqref{ACCA}, we deduce that,    in the frame
$X_1,X_2$   in \eqref{pipo}, the trace-free
shape operator   at the point $(z,t) \in\Sigma_R$  is given by
\begin{equation*}
\label{hst}
h_0   
= \frac{\varrho^2 }{1+\varrho^2}
\left(
\begin{array}{cc}
H & \tau
\\
\tau & - H
\end{array}
\right),
\end{equation*}
where $\varrho = \tau \e|z|$.
On the other hand, from \eqref{xixo} and \eqref{abcp},  we get
\[
  \vartheta(X_1)=0 \quad \textrm{ and } \quad  
\vartheta(X_2) =\frac{\varrho \sqrt{\tau^ 2+ H^ 2 } }{\tau\sqrt{
1+\varrho^2}},
\]
and we   therefore obtain the following formula for the trace-free tensor
$(\vartheta
\otimes\vartheta)_0$ in the frame $X_1,X_2$:
\[
(\vartheta\otimes\vartheta) _0 =  \displaystyle - \frac{(\tau^2+
H^2)}{2 \tau^2 }
\frac{\varrho^2
}{ 1+\varrho^2 }
\left(
\begin{array}{cc} 
1& 0
\\
0 & -1
\end{array}
\right).
\]
Now, in the unknowns  $c\in\R$ and $q$ (that is a rotation by  an angle $\beta$), the system
of equations $h_0 + c q 
(\vartheta\otimes\vartheta)_0 q ^{-1}=0$
holds independently of $\varrho$  if and only if  
$c= 2\tau^2/\sqrt{H^2+\tau^2}$ and
$\beta$ is  the
angle in \eqref{alpha_H}.  
We record this fact in the next:

\begin{proposition} \label{k_0=0_for_S_R}
The linear operator $k$   on  the sphere   $\Sigma_R$ with mean curvature
$H$, at the point $(z,t) \in \Sigma_R$, is given by
\[
 k  = 
\Big( H + \frac{\varrho^2}{1+\varrho^2} \sqrt{\tau^2 + H^2} \Big)
  \mathrm{Id}.
\]
In particular, $\Sigma_R$ has  vanishing  $k_0$ (i.e., $k_0=0$).
\end{proposition}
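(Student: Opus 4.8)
The plan is to verify Proposition \ref{k_0=0_for_S_R} by a direct computation in the orthonormal frame $X_1,X_2$ from \eqref{pipo}, using the pieces already assembled in the discussion preceding the statement. Specifically, I would combine three ingredients: first, the explicit matrix of the shape operator $h$ from Theorem \ref{3.1}, formula \eqref{ACCA}; second, the explicit values $\vartheta(X_1)=0$ and $\vartheta(X_2)=\varrho\sqrt{\tau^2+H^2}/(\tau\sqrt{1+\varrho^2})$, which follow from \eqref{xixo} together with the formulas in \eqref{abcp} defining $c$; and third, the fact that $q_H$ is the rotation by the angle $\alpha_H=\tfrac12\arctan(\tau/H)$ defined in \eqref{alpha_H}.

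First I would write down the matrix of $\vartheta\otimes\vartheta$ in the frame $X_1,X_2$: since $\vartheta(X_1)=0$, the only nonzero entry is the $(2,2)$ entry, equal to $\vartheta(X_2)^2 = (\tau^2+H^2)\varrho^2/(\tau^2(1+\varrho^2))$. Subtracting half the trace gives the trace-free part $(\vartheta\otimes\vartheta)_0 = -\tfrac{(\tau^2+H^2)}{2\tau^2}\tfrac{\varrho^2}{1+\varrho^2}\,\mathrm{diag}(1,-1)$, as recorded in the excerpt. Next I would conjugate this by the rotation $q_H$: a rotation by angle $\alpha$ sends $\mathrm{diag}(1,-1)$ to the matrix $\bigl(\begin{smallmatrix}\cos 2\alpha & -\sin 2\alpha\\ -\sin 2\alpha & -\cos 2\alpha\end{smallmatrix}\bigr)$, so with $2\alpha_H = \arctan(\tau/H)$ I get $\cos 2\alpha_H = H/\sqrt{H^2+\tau^2}$ and $\sin 2\alpha_H = \tau/\sqrt{H^2+\tau^2}$. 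Then I would form $\tfrac{2\tau^2}{\sqrt{H^2+\tau^2}} q_H\circ(\vartheta\otimes\vartheta)_0\circ q_H^{-1}$ and check that it equals exactly $-h_0 = -\tfrac{\varrho^2}{1+\varrho^2}\bigl(\begin{smallmatrix}H & \tau\\ \tau & -H\end{smallmatrix}\bigr)$. Adding back $\tfrac12\mathrm{tr}(k)\,\mathrm{Id}$ and using $\tfrac12\mathrm{tr}(h) = H$ plus $\tfrac12\mathrm{tr}$ of the rotated tensor term gives $k = \bigl(H + \tfrac{\varrho^2}{1+\varrho^2}\sqrt{H^2+\tau^2}\bigr)\mathrm{Id}$, which is a scalar multiple of the identity, hence $k_0=0$. (Alternatively, one can note that $k$'s eigenvalues must then be the $\kappa_i$ adjusted by $\pm\tfrac{\varrho^2}{1+\varrho^2}\sqrt{H^2+\tau^2}$, collapsing both to $H+\tfrac{\varrho^2}{1+\varrho^2}\sqrt{H^2+\tau^2}$.)

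The computation is entirely routine $2\times 2$ matrix algebra; the only point requiring a little care is the bookkeeping of the rotation angle — making sure that $\alpha_H$ is exactly the angle for which the cancellation $h_0 + \tfrac{2\tau^2}{\sqrt{H^2+\tau^2}} q_H(\vartheta\otimes\vartheta)_0 q_H^{-1}=0$ holds \emph{independently of} $\varrho$, which is the content of the sentence "holds independently of $\varrho$ if and only if $c=2\tau^2/\sqrt{H^2+\tau^2}$ and $\beta$ is the angle in \eqref{alpha_H}" in the excerpt. There is no real obstacle: once the matrices of $h_0$ and $(\vartheta\otimes\vartheta)_0$ and the rotation by $\alpha_H$ are in hand, the identity is forced. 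I would therefore present the proof as: plug in \eqref{ACCA}, compute $\vartheta(X_i)$ from \eqref{xixo}–\eqref{abcp}, rotate by $\alpha_H$, and observe the cancellation, concluding that $k$ is scalar and $k_0=0$.
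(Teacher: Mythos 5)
Your proposal is correct and is essentially the paper's own argument: the paper proves the proposition by exactly this computation in the frame $X_1,X_2$, writing $h_0=\tfrac{\varrho^2}{1+\varrho^2}\bigl(\begin{smallmatrix}H&\tau\\ \tau&-H\end{smallmatrix}\bigr)$ and $(\vartheta\otimes\vartheta)_0=-\tfrac{\tau^2+H^2}{2\tau^2}\tfrac{\varrho^2}{1+\varrho^2}\,\mathrm{diag}(1,-1)$ and observing that conjugation by the rotation $q_H$ with $\cos 2\alpha_H=H/\sqrt{H^2+\tau^2}$, $\sin 2\alpha_H=\tau/\sqrt{H^2+\tau^2}$ forces the cancellation with $h_0$ independently of $\varrho$. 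The only detail to fix is the sign in your conjugated matrix: with the orientation convention making the cancellation work one has $q_H\,\mathrm{diag}(1,-1)\,q_H^{-1}=\bigl(\begin{smallmatrix}\cos 2\alpha_H&\sin 2\alpha_H\\ \sin 2\alpha_H&-\cos 2\alpha_H\end{smallmatrix}\bigr)$ (off-diagonal $+\sin 2\alpha_H$, not $-\sin 2\alpha_H$), so that the off-diagonal $\tau$ terms cancel rather than double.
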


In Theorem \ref{5.5}, we prove that \emph{any} topological sphere in $H^1$ with
constant mean curvature has
vanishing
  $k_0$. We need to work in a conformal frame of tangent vector
fields to the
surface.

Let $z=x_1 + i x_2$ be the complex variable.
Let $D\subset\C$ be an open set and, for a given map  $F\in C^\infty(D;H^1)$,
consider
the immersed   surface  $\Sigma=F(D)\subset\H^1$.
The parametrization   $F$ is  conformal   if
there exists a positive function $E\in C^\infty(D)$ 
such that, at any point in $D$,    the vector fields $ V_1=F_*  \frac
{\partial}{   \partial x_1}$  and
$V_2=F_* \frac{\partial}{\partial x_2}$ satisfy:
\begin{equation}
\label{eq:conf}
| V_1|^2=| V_2|^2=E,\quad 
\la V_1,V_2\ra=0.
\end{equation} 
We call $V_1,V_2$ a conformal frame for $\Sigma$ and 
we denote by $\enn $ the  normal vector field  to $\Sigma$ such that triple
$V_1,V_2,\enn$ forms a  positively oriented frame, i.e.,
\begin{equation}
 \label{PO}
    \enn = \frac{1}{E} V_1\wedge V_2.
\end{equation}

The  second fundamental form of $\Sigma$ in the frame
$V_1,V_2$ is denoted by 
\begin{equation}
\label{eq:2ff}
h=
 (h_{ij})_{i,j=1,2}
=\begin{pmatrix}
L&M\\
M&N
\end{pmatrix},\quad h_{ij}=\la \nabla_i\enn ,V_j\ra,
\end{equation}
where $\nabla_i=\nabla_{V_i}$ for $i=1,2$. This notation differs from \eqref{ACCA},
where the fixed frame is $X_1,X_2,\enn$.
Finally,
 the {\em mean curvature} of $\Sigma$
is
\begin{equation}
\label{eq:H}
H=\frac{L+N}{2E}=\frac{h_{11}+h_{22}}{2E}.
\end{equation}

By Hopf's technique on holomorphic quadratic
differentials, 
the validity of the equation $k_0=0$ follows
from the   Codazzi's equations, which  involve
curvature terms. An interesting relation
between
 the $1$-form $\vartheta$
and  the Riemann curvature
operator, defined as   
\[
   R(U,V)W = \nabla_U\nabla_V W - \nabla_V\nabla_U W-\nabla_{[U,V]} W
\]
for any  $U,V,W \in  \Gamma(T H^1)$,   is described 
in the following:

\begin{lemma}
\label{lem:van11}
Let $V_1,V_2$ be a conformal frame of an immersed surface $\Sigma$ in $H^1$
with conformal factor $E$ and unit normal $\enn$. Then, we have  
\begin{align}
\label{eq:c2a}
\la R(V_2,V_1)\enn ,V_2\ra= 
                   4\tau^2 E\theta(V_1)\theta(\enn).
\end{align}
\end{lemma}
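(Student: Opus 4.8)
The plan is to compute the curvature term $\langle R(V_2,V_1)\enn, V_2\rangle$ by working in the left-invariant frame $X,Y,T$ and exploiting bilinearity of the Riemann operator in all three arguments. First I would expand $V_1$, $V_2$, and $\enn$ in the frame $X,Y,T$, writing $\enn = a_1 X + a_2 Y + a_3 T$, and similarly $V_1, V_2$ with coefficients that are functions on $D$. Since $R$ is tensorial (depends only on pointwise values of its arguments), the quantity $\langle R(V_2,V_1)\enn, V_2\rangle$ becomes a trilinear combination of the basic curvature terms $\langle R(e_i,e_j)e_k, e_l\rangle$ with $e_i \in \{X,Y,T\}$. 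These basic terms must be computed once and for all from the fundamental relations \eqref{FR}: using $R(U,V)W = \nabla_U\nabla_V W - \nabla_V\nabla_U W - \nabla_{[U,V]}W$ together with $[X,Y]=-2\tau T$ and $[X,T]=[Y,T]=0$, one finds the sectional-curvature-type data of the left-invariant metric (this is a standard Heisenberg computation: the $XY$-plane has curvature $3\tau^2$, the $XT$- and $YT$-planes have curvature $-\tau^2$, with the appropriate mixed terms).

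The second ingredient is to recognize that only certain combinations survive. The key algebraic observation is that $R(V_2,V_1)$ is antisymmetric in $V_1,V_2$, so only the ``area form'' part of $V_1 \wedge V_2$ matters; and by \eqref{PO} we have $V_1 \wedge V_2 = E\,\enn$ in the sense of the Hodge dual, so $V_2 \wedge V_1$ is, up to the metric identification, $-E$ times the normal. I would therefore rewrite $\langle R(V_2,V_1)\enn, V_2\rangle$ using the curvature operator's symmetries (antisymmetry in the first pair, antisymmetry in the last pair, pair symmetry) to reduce it to a multiple of $E$ times a contraction involving $\enn$ and the horizontal/vertical splitting. Here is where the $1$-form $\vartheta$ enters: $\vartheta(\enn) = \langle \enn, T\rangle = a_3$ measures the vertical component of the normal, and $\vartheta(V_1) = \langle V_1, T\rangle$ measures the vertical component of $V_1$. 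The claim \eqref{eq:c2a} asserts the answer is exactly $4\tau^2 E \,\vartheta(V_1)\vartheta(\enn)$, so I expect that after the dust settles, every term either vanishes or assembles into the product $a_3 \cdot \langle V_1, T\rangle$ with coefficient $4\tau^2 E$ — the factor $4\tau^2$ being twice the ``anisotropy'' $3\tau^2 - (-\tau^2) = 4\tau^2$ between the horizontal sectional curvature and the vertical one, which is the familiar source of such identities in the Heisenberg group.

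Concretely, the cleanest route is: (i) decompose each of $V_1, V_2, \enn$ into a horizontal part (in $\mathrm{span}\{X,Y\}$) and a vertical part (a multiple of $T$); (ii) use multilinearity to split $\langle R(V_2,V_1)\enn,V_2\rangle$ into a sum of terms indexed by these H/V choices; (iii) invoke the curvature computations to see that the term with all four slots horizontal contributes the constant-curvature piece which, being proportional to $\langle \enn, V_2\rangle$ in the relevant antisymmetrized combination, vanishes because $\enn \perp V_2$; similarly terms that are ``too vertical'' vanish since $R(\cdot,\cdot)T$ and $R(T,T)\cdot$ have limited range; (iv) the only surviving terms are the mixed ones, and collecting them gives precisely $4\tau^2 E\,\vartheta(V_1)\vartheta(\enn)$, also using the conformality relations $|V_1|^2 = |V_2|^2 = E$ and $\langle V_1,V_2\rangle = 0$ to identify the coefficient. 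The main obstacle I anticipate is purely bookkeeping: correctly enumerating the surviving trilinear curvature terms and checking that the horizontal-horizontal-horizontal contribution really collapses via the orthogonality $\enn \perp V_2$ rather than leaving a spurious constant-curvature remainder; getting the numerical coefficient $4\tau^2$ exactly right (as opposed to $2\tau^2$ or $3\tau^2$) will require care with the curvature conventions in \eqref{FR}. A useful sanity check along the way is to specialize to the frame $X_1, X_2$ of \eqref{pipo}, where $\vartheta(X_1) = 0$, and verify consistency with the second fundamental form computed in Theorem \ref{3.1}.
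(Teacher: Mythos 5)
Your overall strategy is in substance the paper's own proof: expand $V_1,V_2,\enn$ in the left-invariant frame $X,Y,T$, use tensoriality of $R$ to reduce $\la R(V_2,V_1)\enn,V_2\ra$ to a trilinear combination of the components $\la R(e_i,e_j)e_k,e_l\ra$ computed from \eqref{FR}, and then collapse the sum using $\la V_1,V_2\ra=\la V_1,\enn\ra=\la V_2,\enn\ra=0$ and $|V_2|^2=E$. However, step (iii) of your plan contains a genuine error that, followed literally, produces the wrong coefficient. The purely horizontal block does \emph{not} vanish. Writing $W^h=W-\vartheta(W)T$ for the horizontal part, the curvature with four horizontal entries is of constant-curvature type, and the all-horizontal contribution equals $-3\tau^2\big(|V_2^h|^2\,\la V_1^h,\enn^h\ra-\la V_2^h,\enn^h\ra\,\la V_1^h,V_2^h\ra\big)$. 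The orthogonality relations hold for the \emph{full} vectors, not for their horizontal parts: they give $\la V_1^h,\enn^h\ra=-\vartheta(V_1)\vartheta(\enn)$, $\la V_2^h,\enn^h\ra=-\vartheta(V_2)\vartheta(\enn)$, $\la V_1^h,V_2^h\ra=-\vartheta(V_1)\vartheta(V_2)$ and $|V_2^h|^2=E-\vartheta(V_2)^2$, so this block equals $3\tau^2E\,\vartheta(V_1)\vartheta(\enn)$; it supplies most of the answer, and is exactly what the paper obtains when grouping the terms carrying the coefficient $\pm3\tau^2$. The mixed horizontal--vertical terms contribute only the remaining $\tau^2E\,\vartheta(V_1)\vartheta(\enn)$. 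If the horizontal block really dropped out via $\enn\perp V_2$ as you assert, you would land on $\tau^2E\,\vartheta(V_1)\vartheta(\enn)$ rather than $4\tau^2E\,\vartheta(V_1)\vartheta(\enn)$, so the heuristic that the coefficient is ``the anisotropy coming from the mixed terms'' is not where $4\tau^2$ actually comes from.

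Two smaller points. First, with the paper's conventions \eqref{FR} and $R(U,V)W=\nabla_U\nabla_VW-\nabla_V\nabla_UW-\nabla_{[U,V]}W$, the sectional curvatures are $K(X,Y)=-3\tau^2$ and $K(X,T)=K(Y,T)=+\tau^2$, the opposite signs of those you quote; the magnitude of the gap is still $4\tau^2$, but since you flag the coefficient as the delicate point, the signs must be fixed before the bookkeeping can close. Second, the appeal to $V_1\wedge V_2=E\,\enn$ and the pair symmetries of $R$ is not needed and does not by itself eliminate any block; the computation is an honest enumeration of the surviving trilinear terms, as in the paper. Your sanity check against the frame of \eqref{pipo} with $\vartheta(X_1)=0$ is a good idea and would have exposed the missing $3\tau^2$ piece.
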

\begin{proof} 
We use   the notation 
\begin{equation}
 \label{viv}
\begin{split}
 V_i & =V_i^XX+V_i^YY+V_i^TT,\qquad  i=1,2,
\\
\enn & =\enn^XX+\enn^YY+\enn^TT.
\end{split}
\end{equation}
From the fundamental relations \eqref{FR}, we obtain:
\[
\begin{array}{lllr}
\la R(V_2,V_1)\enn ,V_2\ra
& = & V_2^XV_1^Y\enn^YV_2^X\cdot(-3\tau^2)&(1)\\
&& + V_2^XV_1^Y\enn^XV_2^Y\cdot(3\tau^2)&(2)\\
&& + V_2^XV_1^T\enn^TV_2^X\cdot(\tau^2)&(3)\\
&& + V_2^XV_1^T\enn^XV_2^T\cdot(-\tau^2)&(4)\\
&& + V_2^YV_1^X\enn^XV_2^Y\cdot(-3\tau^2)&(5)\\
&& + V_2^YV_1^X\enn^YV_2^X\cdot(3\tau^2)&(6)\\
&& + V_2^YV_1^T\enn^TV_2^Y\cdot(\tau^2)&(7)\\
&& + V_2^YV_1^T\enn^YV_2^T\cdot(-\tau^2)&(8)\\
&& + V_2^TV_1^X\enn^XV_2^T\cdot(\tau^2)&(9)\\
&& + V_2^TV_1^X\enn^TV_2^X\cdot(-\tau^2)&(10)\\
&& + V_2^TV_1^Y\enn^YV_2^T\cdot(\tau^2)&(11)\\
&& + V_2^TV_1^Y\enn^TV_2^Y\cdot(-\tau^2).&(12)
\end{array}
\]
Now, we have  $(9)+(10)+(11)+(12)=0$. In fact:
\[\begin{split}
(9)+(11)&=\tau^2V_2^TV_2^T(V_1^X\enn ^X+V_1^Y\enn ^Y)=-\tau^ 2
V_2^TV_2^TV_1^T\enn
^T,\\
(10)+(12)&=-\tau^2V_2^T\enn ^T(V_1^XV_2^X+V_1^YV_2^Y)=\tau^2V_2^T\enn
^TV_1^TV_2^T,
\end{split}
\]
where we used $\la V_1,\enn \ra=\la V_1,V_2\ra=0$ to deduce
$V_1^X\enn ^X+V_1^Y\enn ^Y=-V_1^T\enn ^T$ and
$V_1^XV_2^X+V_1^YV_2^Y=-V_1^TV_2^T$.
Moreover,  we have $(3)+(4)+(7)+(8)=\tau^2 E V_1^T\enn ^T$. Indeed, 
\[
\begin{split}
(3)+(7)&=\tau^2 V_1^T\enn ^T(V_2^XV_2^X+V_2^YV_2^Y)=\tau^2V_1^T\enn
^T(E-V_2^TV_2^T),\\
(4)+(8)&=-\tau^2V_1^TV_2^T(V_2^X\enn ^X+V_2^Y\enn ^Y)=\tau^2V_1^TV_2^TV_2^T\enn
^T,
\end{split}
\]
where we used $\la V_2,V_2\ra=E$ and $\la V_2,\enn \ra=0$ to deduce
$V_2^XV_2^X+V_2^YV_2^Y=E-V_2^TV_2^T$ and $V_2^X\enn ^X+V_2^Y\enn ^Y=-V_2^T\enn
^T$. Indeed, 
\[
\begin{split}
(1)+(5)&=-3\tau^2(V_2^XV_1^Y\enn ^YV_2^X+V_2^YV_1^X\enn ^XV_2^Y)\\
&=3\tau^2[V_1^T\enn ^T(V_2^XV_2^X+V_2^YV_2^Y)+V_2^XV_1^X\enn
^XV_2^X+V_2^YV_1^Y\enn ^YV_2^Y]\\
&=3\tau^2[V_1^T\enn ^T(E-V_2^TV_2^T)+V_2^XV_1^X\enn ^XV_2^X+V_2^YV_1^Y\enn
^YV_2^Y]\\
(2)+(6)&=3\tau^2[V_2^XV_1^Y\enn ^XV_2^Y+V_2^YV_1^X\enn ^YV_2^X]\\
&=-3\tau^2[V_1^TV_2^T(V_2^X\enn ^X+V_2^Y\enn ^Y)+V_2^XV_1^X\enn
^XV_2^X+V_2^YV_1^Y\enn ^YV_2^Y]\\
&=-3\tau^2[-V_1^TV_2^T\enn ^TV_2^T+V_2^XV_1^X\enn ^XV_2^X+V_2^YV_1^Y\enn
^YV_2^Y],
\end{split}
\]
where we used $\la V_1,\enn \ra=\la V_1,V_2\ra=0$ to deduce 
$V_1^Y\enn ^Y=-V_1^X\enn ^X-V_1^T\enn ^T$, 
$V_1^YV_2^Y=-V_1^XV_2^X-V_1^TV_2^T$ 
and $V_1^XV_2^X=-V_1^XV_2^X-V_1^TV_2^T$.
Equation \eqref{eq:c2a}  follows.
 
\end{proof}

 For an  immersed   surface  with conformal frame  $V_1,V_2$, 
we use   the notation 
$V_iE=E_i$, $V_iH=H_i$, $V_iN=N_i$, $V_iM=M_i$, $V_iL=L_i$, $i=1,2$.

\begin{theorem}[Codazzi's Equations]
Let $\Sigma=F(D)$ be an immersed  surface in $\H^1$ 
with conformal frame $V_1,V_2$, conformal factor $E$ and unit normal $\enn$.
Then, we have 
\begin{align}
\label{eq:coda1}
H_1&=\frac{1}{E}\Big\{
\frac{L_1-N_1}{2}+M_2-4\tau^2E\theta(V_1)\theta(\enn)\Big\},\\
\label{eq:coda2}
H_2&=\frac{1}{E}\Big\{
\frac{N_2-L_2}{2}+M_1-4\tau^2E\theta(V_2)\theta(\enn)\Big\},
\end{align}
where $L,M,N, H$ are as in 
\eqref{eq:2ff} and \eqref{eq:H}.
\end{theorem}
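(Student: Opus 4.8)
The plan is to derive \eqref{eq:coda1}--\eqref{eq:coda2} from the general Codazzi equation for a hypersurface, specialized to the conformal frame $V_1,V_2$, and then to substitute the curvature identity of Lemma~\ref{lem:van11}. With the sign convention $h_{ij}=\langle\nabla_i\enn,V_j\rangle$ fixed in \eqref{eq:2ff} (i.e.\ the shape operator is $W\mapsto\nabla_W\enn$), the general Codazzi equation reads
\[
\langle R(V_i,V_j)\enn,V_k\rangle=(\nabla^{\Sigma}_{V_i}h)(V_j,V_k)-(\nabla^{\Sigma}_{V_j}h)(V_i,V_k),
\]
where $\nabla^{\Sigma}$ is the Levi-Civita connection of the induced metric $g_\Sigma=E(dx_1^2+dx_2^2)$. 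This follows by writing $R(V_1,V_2)\enn=\nabla_1\nabla_2\enn-\nabla_2\nabla_1\enn$ (note $[V_1,V_2]=0$ because $V_i=F_*\partial_{x_i}$), decomposing $\nabla_i\enn$ and its further derivatives into tangential and normal parts, and taking the tangential component.

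First I would record the Christoffel symbols of $g_\Sigma=E(dx_1^2+dx_2^2)$ in the frame $V_1,V_2$; since this metric is conformal to the flat one with factor $E$, they are the standard ones,
\[
\nabla^{\Sigma}_{V_1}V_1=\tfrac{E_1}{2E}V_1-\tfrac{E_2}{2E}V_2,\qquad
\nabla^{\Sigma}_{V_1}V_2=\nabla^{\Sigma}_{V_2}V_1=\tfrac{E_2}{2E}V_1+\tfrac{E_1}{2E}V_2,\qquad
\nabla^{\Sigma}_{V_2}V_2=-\tfrac{E_1}{2E}V_1+\tfrac{E_2}{2E}V_2.
\]
Inserting these into the Codazzi equation with $(i,j,k)=(1,2,2)$, using $h_{11}=L,\ h_{12}=M,\ h_{22}=N$ and $L+N=2EH$ from \eqref{eq:H}, a direct computation gives
\[
(\nabla^{\Sigma}_{V_1}h)(V_2,V_2)-(\nabla^{\Sigma}_{V_2}h)(V_1,V_2)=N_1-M_2-E_1H.
\]
(One may equivalently avoid $\nabla^\Sigma$: write the tangent vectors $\nabla_1\enn=E^{-1}(LV_1+MV_2)$, $\nabla_2\enn=E^{-1}(MV_1+NV_2)$, differentiate once more, and use $\langle\nabla_1V_1,V_2\rangle=-E_2/2$, $\langle\nabla_1V_2,V_2\rangle=E_1/2$, $\langle\nabla_2V_2,V_2\rangle=E_2/2$; the same expression results.)

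For the right-hand side, the antisymmetry $R(V_1,V_2)=-R(V_2,V_1)$ together with Lemma~\ref{lem:van11} gives $\langle R(V_1,V_2)\enn,V_2\rangle=-4\tau^2E\,\theta(V_1)\theta(\enn)$, whence $N_1-M_2-E_1H=-4\tau^2E\,\theta(V_1)\theta(\enn)$. Differentiating $2EH=L+N$ along $V_1$ yields $EH_1=\tfrac12(L_1+N_1)-E_1H$; substituting $M_2-4\tau^2E\,\theta(V_1)\theta(\enn)=N_1-E_1H$ into the bracket on the right-hand side of \eqref{eq:coda1} turns it into $\tfrac12(L_1+N_1)-E_1H=EH_1$, which is \eqref{eq:coda1}. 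Equation \eqref{eq:coda2} is obtained the same way, starting from the Codazzi equation with $(i,j,k)=(1,2,1)$ and invoking the version of Lemma~\ref{lem:van11} with $V_1$ and $V_2$ exchanged --- legitimate because the defining relations \eqref{eq:conf} of a conformal frame are symmetric under $V_1\leftrightarrow V_2$ --- which supplies $\langle R(V_1,V_2)\enn,V_1\rangle=4\tau^2E\,\theta(V_2)\theta(\enn)$; then $(\nabla^{\Sigma}_{V_1}h)(V_2,V_1)-(\nabla^{\Sigma}_{V_2}h)(V_1,V_1)=M_1-L_2+E_2H$ and the same algebra as before yields \eqref{eq:coda2}.

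Past the set-up the argument is pure bookkeeping; the only points that require care are the sign in the Codazzi identity (dictated by the convention $h_{ij}=\langle\nabla_i\enn,V_j\rangle$), the sign flip $R(V_1,V_2)=-R(V_2,V_1)$ needed to apply Lemma~\ref{lem:van11}, and the observation that \eqref{eq:coda2} requires the $V_1\leftrightarrow V_2$ analogue of that lemma. I do not expect a genuine obstacle.
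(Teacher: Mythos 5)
Your proposal is correct and takes essentially the same route as the paper: both reduce the claim to the classical Codazzi identity for $\langle R(V_i,V_j)\enn,V_k\rangle$ in the conformal frame and then insert Lemma \ref{lem:van11} together with its $V_1\leftrightarrow V_2$ analogue (the swap is indeed legitimate, as you note). The only difference is cosmetic bookkeeping --- you compute $(\nabla^{\Sigma}_{V_i}h)(V_j,V_k)$ via the conformal Christoffel symbols, whereas the paper differentiates $h_{ij}$ in the ambient connection through \eqref{eq:c1} and \eqref{eq:c13} --- and your intermediate identities $N_1-M_2-E_1H$ and $M_1-L_2+E_2H$, as well as all the signs, check out.
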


\begin{proof}
We start from the following well-known formulas for the derivatives of the mean
curvature: 
\begin{align}
\label{eq:cod1}
H_1&=\frac{1}{E}\Big\{ \frac{L_1-N_1}{2}+M_2+\la R(V_1,V_2)\enn , V_2\ra
\Big\},\\
\label{eq:cod2}
H_2&=\frac{1}{E}\Big\{ \frac{N_2-L_2}{2}+M_1+\la R(V_2,V_1)\enn , V_1\ra \Big\}.
\end{align}
Our claims \eqref{eq:coda1} and \eqref{eq:coda2}
follow from these formulas and  Lemma \ref{lem:van11}. 

For the reader's convenience, 
we give a   short sketch of the proof of \eqref{eq:cod1},
see e.g.~\cite{Kling}  for the flat case.  
For
any $i,j,k=1,2$, we have
\begin{equation}\label{eq:c1}
  V_k h_{ij}-V_i h_{kj} = \la R(V_k,V_i)\enn ,V_j\ra+\la\nabla_i\enn
,\nabla_kV_j\ra-\la\nabla_k\enn ,\nabla_iV_j\ra.
\end{equation} 
Setting   $i=j=2$  and $k=1$ in  \eqref{eq:c1}, and using \eqref{eq:H} we 
  obtain
\begin{equation}
\label{eq:c12}\begin{split}
V_1(2EH)
&=L_1+M_2+\la R(V_1,V_2)\enn ,V_2\ra
+\la\nabla_2\enn ,\nabla_1V_2\ra
-\la\nabla_1\enn ,\nabla_2V_2\ra.
\end{split}
\end{equation}
Using   the expression of $\nabla_i\enn$ in the conformal frame, we find   
\begin{equation}
\label{eq:c13} \la\nabla_2\enn ,\nabla_1V_2\ra
-\la\nabla_1\enn ,\nabla_2V_2\ra =HE_1,
\end{equation}
and from  \eqref{eq:c12} and \eqref{eq:c13} we deduce  that 
\begin{equation} \label{eq:c14}
H_1=\frac{1}{2E}\{L_1-E_1H+M_2+\la R(V_1,V_2)\enn ,V_2\ra\}.
\end{equation}
From \eqref{eq:H}, we have the further equation
\[
L_1-E_1H =\frac{L_1-N_1}{2}+EH_1,
\]
that, inserted into \eqref{eq:c14},
gives claim  \eqref{eq:cod1}.

\end{proof}

Now we switch to the complex variable $z = x_1+ i x_2 \in D$ and define the
complex
vector fields 
\[
\begin{split}
  Z = \frac 12 (V_1-i V_2)  = F_* \Big( \frac{\partial  }{\partial z}\Big),
  \\
  \bar Z = \frac 12 (V_1+i V_2)  = F_* \Big( \frac{\partial  }{\partial \bar
z}\Big).
\end{split}
\]
Equations \eqref{eq:coda1}-\eqref{eq:coda2}
can be transformed into one single equation:
\begin{equation} \label{COMPO}
E(ZH) = \bar Z\Big(\frac{L-N}{2} - i M\Big) - 4\tau^2 E\vartheta(\enn)\vartheta(Z). 
\end{equation}

 Now consider the trace-free part of $b=k-h$, i.e.,
 \begin{equation*}
 \label{stio}
    b_0 =  \frac{2\tau^2}{\sqrt{H^2+\tau^2}} q_H \circ 
(\vartheta\otimes\vartheta)_0\circ q_H^{-1}
 \end{equation*}
The entries of $b_0$ as a quadratic form in the conformal frame $V_1,V_2$, with
$\vartheta_i = \vartheta(V_i)$ and $
c_H =\frac{2\tau^2}{H^2+\tau^2}$, are given by
\begin{equation} \label{poppo}
\begin{split}
A & =b_0(V_1,V_1) = c_H\Big( H\frac{\theta_1^2-\theta_2^2}{2}-\tau\theta_1\theta_2\Big),
\\
B & = b_0(V_1,V_2) = c_H\Big(H\theta_1\theta_2+\tau\frac{\theta_1^2-\theta_2^2}{2}\Big).
\end{split}
\end{equation}
These entries can be computed starting from $q_H (\vartheta\otimes\vartheta)_0 q_H^{-1} = q_H^2  (\vartheta\otimes\vartheta)_0$, where $q_H^2$ is the rotation by the angle $2\alpha_H$ that, by \eqref{alpha_H}, satisfies $\cos(2\alpha_H)=H/\sqrt{H^2+\tau^2}$ and $\sin(2\alpha_H)=\tau/\sqrt{H^2+\tau^2}$.

\begin{lemma} 
\label{lem:van12}
Let  $\Sigma $ be an immersed surface in $H^1$ with constant mean curvature $H$
and unit normal $\enn$ such that $V_1,V_2,\enn$ is positively oriented. Then,  
on $\Sigma$ we have
\begin{equation} 
\label{stix}
\bar Z (A-iB) = - 4 \tau^2 E \vartheta(\enn)\vartheta(Z).
\end{equation}
\end{lemma}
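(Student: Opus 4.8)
The plan is to recombine the two real entries $A,B$ of $b_0$ from \eqref{poppo} into a single complex quantity, differentiate, and reduce everything to a pointwise identity for the $1$-form $\vartheta$. Using $\theta_1-i\theta_2=\vartheta(V_1-iV_2)=2\vartheta(Z)$ together with the algebraic identities $\tau+iH=i(H-i\tau)$ and $c_H(H-i\tau)=\tfrac{2\tau^2}{H+i\tau}$, a direct manipulation of \eqref{poppo} gives
\[
 A-iB=\frac{c_H(H-i\tau)}{2}\,(\theta_1-i\theta_2)^2=\frac{4\tau^2}{H+i\tau}\,\vartheta(Z)^2 .
\]
Since $H$ is constant the coefficient is constant, so by the Leibniz rule $\bar Z(A-iB)=\tfrac{8\tau^2}{H+i\tau}\,\vartheta(Z)\,\bar Z(\vartheta(Z))$, and the Lemma follows once we prove the pointwise identity
\[
 \bar Z(\vartheta(Z))=-\tfrac12(H+i\tau)\,E\,\vartheta(\enn).
\]

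To establish this identity I would write $\vartheta(Z)=\langle Z,T\rangle$ and expand $\bar Z\langle Z,T\rangle=\langle\nabla_{\bar Z}Z,T\rangle+\langle Z,\nabla_{\bar Z}T\rangle$. For the second term, the relations \eqref{FR} give $\nabla_W T=\tau(W^XY-W^YX)$ for any $W=W^XX+W^YY+W^TT$; taking $W=\bar Z$ and pairing with $Z$ produces the expression $\bar Z^XZ^Y-\bar Z^YZ^X$, which, writing $Z=\tfrac12(V_1-iV_2)$, equals $-\tfrac i2(V_1^XV_2^Y-V_1^YV_2^X)$, and this bracket is $E\,\enn^T=E\,\vartheta(\enn)$ by \eqref{PO}; hence $\langle Z,\nabla_{\bar Z}T\rangle=-\tfrac{i\tau}{2}E\,\vartheta(\enn)$. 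For the first term, I decompose $T=T^{\mathrm{tan}}+\vartheta(\enn)\,\enn$ into its tangential and normal parts along $\Sigma$. The normal part contributes $\vartheta(\enn)\langle\nabla_{\bar Z}Z,\enn\rangle=-\vartheta(\enn)\,h(\bar Z,Z)=-\tfrac{EH}{2}\vartheta(\enn)$, where $h(\bar Z,Z)=\tfrac14(h_{11}+h_{22})=\tfrac{EH}{2}$ by \eqref{eq:2ff} and \eqref{eq:H}. The tangential part vanishes: $T^{\mathrm{tan}}$ is a combination of $V_1,V_2$, hence of $Z$ and $\bar Z$, while $\langle\nabla_{\bar Z}Z,Z\rangle=\tfrac12\bar Z\langle Z,Z\rangle=0$ and $\langle\nabla_{\bar Z}Z,\bar Z\rangle=\langle\nabla_Z\bar Z,\bar Z\rangle=\tfrac12 Z\langle\bar Z,\bar Z\rangle=0$, using conformality \eqref{eq:conf} (which forces $\langle Z,Z\rangle=\langle\bar Z,\bar Z\rangle=0$) together with $[\bar Z,Z]=F_*[\partial_{\bar z},\partial_z]=0$ and the torsion-freeness of $\nabla$. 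Summing, $\bar Z(\vartheta(Z))=-\tfrac{EH}{2}\vartheta(\enn)-\tfrac{i\tau}{2}E\,\vartheta(\enn)=-\tfrac12(H+i\tau)E\,\vartheta(\enn)$, and substituting into $\bar Z(A-iB)=\tfrac{8\tau^2}{H+i\tau}\vartheta(Z)\bar Z(\vartheta(Z))$ yields exactly $-4\tau^2E\,\vartheta(\enn)\vartheta(Z)$, which is \eqref{stix}.

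The two steps carrying the content are the recombination $A-iB=\tfrac{4\tau^2}{H+i\tau}\vartheta(Z)^2$ — precisely where the rotation $q_H$ built into $b_0$ is forced to interact correctly with the complex structure on $\Sigma$ — and the vanishing $\langle\nabla_{\bar Z}Z,T^{\mathrm{tan}}\rangle=0$, which rests on conformality and the torsion-free identity $\nabla_{\bar Z}Z=\nabla_Z\bar Z$; the rest is routine bookkeeping in the frame $X,Y,T$.
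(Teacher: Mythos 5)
Your proposal is correct; every step checks out, including the factorization $A-iB=\tfrac{4\tau^2}{H+i\tau}\,\vartheta(Z)^2$ and the identity $\bar Z(\vartheta(Z))=-\tfrac12(H+i\tau)E\,\vartheta(\enn)$, whose real and imaginary parts are exactly the paper's identities $V_1\theta_1+V_2\theta_2=-2EH\vartheta(\enn)$ and $V_2\theta_1-V_1\theta_2=-2\tau E\vartheta(\enn)$. The difference from the paper is one of organization rather than substance: the paper splits \eqref{stix} into the real system \eqref{eq:c3a} and verifies each equation by expanding $A_1+B_2$ and $A_2-B_1$ via Leibniz on the products $\theta_i\theta_j$, feeding in \eqref{eq:a1} and \eqref{eq:a2}, which it derives from $\nabla_1V_1+\nabla_2V_2=-2EH\enn$ and $[V_2,V_1]=0$; you instead complexify from the start, observe that $b_0$ written in the conformal frame is literally a constant multiple of $\vartheta(Z)^2$ (which makes the role of the rotation $q_H$ and of the constant $2\tau^2/\sqrt{H^2+\tau^2}$ transparent), and reduce the lemma to the single first-order identity for $\bar Z(\vartheta(Z))$, proved via the normal/tangential splitting of $T$, the identity $\langle\nabla_{\bar Z}Z,\enn\rangle=-h(\bar Z,Z)=-EH/2$, and the vanishing $\langle Z,Z\rangle=\langle\bar Z,\bar Z\rangle=0$ coming from conformality. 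Your route buys brevity and a conceptual explanation of why \eqref{stix} holds (the correction term of the generalized Hopf differential is itself a holomorphic-looking square $\vartheta(Z)^2$ up to a constant, in the spirit of Abresch--Rosenberg), at the cost of working with the complex-bilinear extension of the metric and of $h$; the paper's componentwise computation avoids that extension and stays entirely real. One small caveat worth stating explicitly in your write-up: the division by $H+i\tau$ is legitimate because $H,\tau$ are real with $H^2+\tau^2\neq0$, which is already implicit in the definition of $c_H$ and of $k$.
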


\begin{proof} 
The complex equation \eqref{stix} is equivalent to the system
of real equations  
\begin{equation}
\label{eq:c3a}
\begin{split}
A_1+ B_2 &= -4\tau^2 E \vartheta(\enn) \vartheta(V_1),
\\ A_2-B_1&= 4\tau^2  E \vartheta(\enn) \vartheta(V_2),
\end{split}
\end{equation}
where $A_i = V_i A$ and $B_i = V_i B$, $i=1,2$.

We check the first equation in \eqref{eq:c3a}. Since $H$ is constant, we
have 
\[
 A_1 + B_2 = c_H 
H\Big\{V_1\Big(\frac{\theta_1^2-\theta_2^2}{2}\Big)+V_2(\theta_1\theta_2)\Big\}
+\tau c_H 
\Big\{V_2\Big(\frac{\theta_1^2-\theta_2^2}{2}\Big)-V_1(\theta_1\theta_2)\Big\},
\]
where
\[
\begin{split}
 V_1\Big(\frac{\theta_1^2-\theta_2^2}{2}\Big)+V_2(\theta_1\theta_2)
& = \theta_1  (V_1\theta_1+V_2\theta_2)+\theta_2 
(V_2\theta_1-V_1\theta_2),
\\V_2\Big(\frac{\theta_1^2-\theta_2^2}{2}\Big) 
-
V_1(\theta_1\theta_2) 
& = \theta_1 
(V_2\theta_1-V_1\theta_2)-\theta_2 (V_1\theta_1+V_2\theta_2).
\end{split}
\]
For $i,j=1,2$, we have  
\begin{equation}
\label{eq:Vtheta}
V_i\theta_j= \la\nabla_iT,V_j\ra+\la T,\nabla_iV_j\ra,
\end{equation}
where, with the notation \eqref{viv} and    by the fundamental relations 
\eqref{FR},
\begin{equation} \label{VALX}
\la  \nabla_iT, V_j\ra = \la \tau V_i^XY-\tau V_i^YX, V_j\ra = \tau
V_i^XV_j^Y-\tau V_i^YV_j^X.
\end{equation}
From \eqref{eq:Vtheta}, \eqref{VALX}, \eqref{PO}, and 
\[
\nabla_2 V_1 - \nabla_1
V_2 =[V_2,V_1] = \Big[F_* \frac{\partial}{\partial
x_2},F_*\frac{\partial}{\partial x_1}\Big]= F_*\Big[\frac{\partial}{\partial
x_2},\frac{\partial}{\partial x_1}\Big]=0 ,
\]
we deduce 
\begin{equation}
\begin{split}
\label{eq:a2}
   V_2\theta_1-V_1\theta_2&=2\tau(V_1^YV_2^X-V_1^XV_2^Y)
 + \la T,\nabla_2V_1-\nabla_1V_2\ra=-2\tau E\theta(\enn).
\end{split}
\end{equation}

By the definition \eqref{eq:H} and \eqref{eq:conf}, we have 
\[ 
\nabla_1V_1
+ \nabla_2V_2 =\la\nabla_1V_1
+ \nabla_2V_2  ,\enn \ra\enn =-2EH\enn,
\]
and thus, again from \eqref{eq:Vtheta} and \eqref{VALX}, we obtain 
\begin{equation}
\label{eq:a1}
 V_1\theta_1+V_2\theta_2
 =\theta(\nabla_1V_1+\nabla_2V_2)= -2EH\vartheta(\enn).
\end{equation}

From
\eqref{eq:a1} and \eqref{eq:a2} we deduce that  
\begin{equation}
\label{eq:aa}
\begin{split}
  V_1\Big(\frac{\theta_1^2-\theta_2^2}{2}\Big)+V_2(\theta_1\theta_2)
 &
 =
-2E\theta(\enn)[H\theta(V_1)+\tau\theta(V_2)],
\\
V_2\Big(\frac{\theta_1^2-\theta_2^2}{2}
\Big)- V_1(\theta_1\theta_2) & =- 2E\theta(\enn)[\tau
\theta(V_1)-H\theta(V_2)],
\end{split}
\end{equation}
and finally 
\[
\begin{split}
A_1+B_2&= -2 c_H (H^2+\tau^2) E\theta(\enn)\theta(V_1) = -4\tau^2
E\vartheta(\enn)\vartheta(V_1).
\end{split}
\]

In order to prove the second equation in \eqref{eq:c3a}, notice that
\[
B_1-A_2=c_HH\Big\{V_1(\theta_1\theta_2)-V_2\Big(\frac{\theta_1^2-\theta_2^2}{2}
\Big)\Big\}
+c_H\tau\Big\{V_2(\theta_1\theta_2)+V_1\Big(\frac{\theta_1^2-\theta_2^2}{2}
\Big)\Big\}.
\]
By \eqref{eq:aa} we hence obtain
\[\begin{split}
B_1-A_2&=c_HH\Big\{2E\theta(\enn)[\tau
\theta(V_1)-H\theta(V_2)]\Big\}
-c_H\tau\Big\{2E\theta(\enn)[H\theta(V_1)+\tau\theta(V_2)]\Big\}\\
&=-2c_H(H^2+\tau^2)E\theta(\mathcal N)\theta(V_1)= -4\tau^2
E\vartheta(\enn)\vartheta(V_2).
\end{split}
\] 
\end{proof}

Let $\Sigma$ be an immersed surface in $H^1$ defined in terms of a 
conformal
parametrization $F\in C^\infty(D;H^1)$.  
Let $f\in C^\infty(D;\C)$ be the function of the complex variable $z\in D$
given by
\begin{equation} \label{poi}
 f(z) = \frac{L-N}{2} - i M + A-i B,
\end{equation}
where $L,M,M,A,B$ are defined as in \eqref{eq:2ff} and \eqref{poppo} via the
conformal frame $V_1,V_2$ and are evaluated  at the
point $F(z)$.

\begin{proposition} 
\label{prop:van1}
If $\Sigma$ has constant mean curvature $H$ then the function $f$ in \eqref{poi}
is holomorphic in $D$.
\end{proposition}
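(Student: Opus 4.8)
The plan is to show that $\bar Z f = 0$ in $D$, which is equivalent to holomorphy since $f$ is smooth and $\bar Z = F_*(\partial/\partial\bar z)$ acts on functions of $z$ as the operator $\partial/\partial\bar z$ (up to the conformal factor, which is irrelevant for the vanishing). First I would split $f = g + b$ where
\[
 g(z) = \frac{L-N}{2} - i M \quad\text{and}\quad b(z) = A - i B,
\]
so that $\bar Z f = \bar Z g + \bar Z(A-iB)$. For the second summand I simply invoke Lemma \ref{lem:van12}, which gives exactly
\[
 \bar Z(A-iB) = -4\tau^2 E\,\vartheta(\enn)\vartheta(Z).
\]

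Next I would handle $\bar Z g$. The two real Codazzi equations \eqref{eq:coda1}--\eqref{eq:coda2} have already been repackaged in the excerpt into the single complex identity \eqref{COMPO}, namely
\[
 E(ZH) = \bar Z\Big(\frac{L-N}{2} - iM\Big) - 4\tau^2 E\,\vartheta(\enn)\vartheta(Z),
\]
i.e. $\bar Z g = E(ZH) + 4\tau^2 E\,\vartheta(\enn)\vartheta(Z)$. Since $\Sigma$ has constant mean curvature, $ZH = 0$, so $\bar Z g = 4\tau^2 E\,\vartheta(\enn)\vartheta(Z)$.

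Adding the two contributions gives
\[
 \bar Z f = \bar Z g + \bar Z(A-iB) = 4\tau^2 E\,\vartheta(\enn)\vartheta(Z) - 4\tau^2 E\,\vartheta(\enn)\vartheta(Z) = 0,
\]
so $f$ is holomorphic in $D$. In short, the correction term $b_0 = k - h$ in the definition of $k$ was engineered precisely so that its $\bar Z$-derivative cancels the curvature term in the Heisenberg Codazzi equations; the whole proof is just this cancellation. The only point requiring any care is the bookkeeping between the real system \eqref{eq:coda1}--\eqref{eq:coda2} and its complex form \eqref{COMPO}, together with making sure the orientation conventions used in Lemma \ref{lem:van12} (that $V_1,V_2,\enn$ is positively oriented, as in \eqref{PO}) match those used in deriving \eqref{COMPO}; but since both \eqref{COMPO} and Lemma \ref{lem:van12} are already stated in the excerpt with consistent sign conventions, there is no real obstacle — the proof is a two-line combination. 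If one did not have \eqref{COMPO} available, the main work would instead be the passage from \eqref{eq:coda1}--\eqref{eq:coda2} to the complex equation, which uses $ZH = \frac12(H_1 - iH_2)$, $\bar Z(\cdot) = \frac12(V_1 + iV_2)(\cdot)$, and $\vartheta(Z) = \frac12(\vartheta_1 - i\vartheta_2)$, but that is the routine algebra the excerpt has already carried out.
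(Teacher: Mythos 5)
Your proof is correct and is essentially identical to the paper's: both combine the complex Codazzi identity \eqref{COMPO} with $ZH=0$ and Lemma \ref{lem:van12} so that the two curvature terms $\pm 4\tau^2 E\,\vartheta(\enn)\vartheta(Z)$ cancel, yielding $\bar Z f=0$, i.e.\ $\partial_{\bar z}f=0$ in $D$. No gaps.
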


\begin{proof}
 From \eqref{COMPO} with $ZH=0$ and \eqref{stix}, we obtain the equation
on $\Sigma = F(D)$
\[
 \bar Z\Big(\frac{L-N}{2} - i M + A -iB\Big) = 0,
\]
that is equivalent to $\partial _{\bar z} f =0$ in $D$. 
\end{proof}

Now, by a standard argument of Hopf, see \cite{H} Chapter VI, for topological
spheres the
function $f$ is identically zero.  
By Liouville's theorem, this follows from the estimate
\[
 |f(z)| \leq \frac{C}{|z|^4},\quad z\in\C,
\]
that can be  obtained expressing the second fundamental forms in two different
charts without the north and south pole, respectively. We skip the details
of the proof of the next:

\begin{theorem}
\label{5.5}
A topological sphere  $\Sigma$  immersed in $H^1$  with constant mean
curvature has vanishing  $k_0$.
\end{theorem}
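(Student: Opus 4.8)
The plan is to deduce Theorem~\ref{5.5} from Proposition~\ref{prop:van1} via Hopf's holomorphicity argument, exactly along the lines sketched right after \eqref{poi}. First I would observe that $\Sigma$, being a topological sphere, can be covered by two conformal charts $F\in C^\infty(D;H^1)$ and $F'\in C^\infty(D';H^1)$, each of which omits exactly one of the two points that play the role of north and south pole; after a M\"obius change of the complex coordinate we may take $D=\C$ and $D'=\C$ with the transition map on the overlap given by $z\mapsto 1/z$. On each chart Proposition~\ref{prop:van1} gives a holomorphic function, $f$ on $\C$ and $f'$ on $\C$, assembled from the second fundamental form and the tensor $b_0$ as in \eqref{poi}.

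The key step is to understand the transformation law of $f$ under the change of chart. The quantity $\tfrac{L-N}{2}-iM$ is, up to the conformal factor, the classical Hopf differential, which transforms as a quadratic differential: if $w=1/z$ then $f(z)\,dz^2 = f'(w)\,dw^2$, i.e. $f(z) = f'(1/z)\cdot z^{-4}$. The added term $A-iB$ comes from $b_0$, which in \eqref{poppo} is built from the bilinear form $\vartheta\otimes\vartheta$ restricted to $T\Sigma$, composed with a fixed rotation $q_H$ depending only on the constant $H$; since $\vartheta\otimes\vartheta$ is a well-defined symmetric $2$-tensor on $\Sigma$ independent of the chart, its trace-free part also transforms as a quadratic differential, and the same rotation $q_H$ is applied in both charts. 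Hence $f$ transforms as a quadratic differential as well. Therefore $g(w):=f'(1/w)\,w^{-4}$ extends $f$ holomorphically; equivalently, $f$ is holomorphic on $\C$ and $|f(z)|=|f'(1/z)|\,|z|^{-4}$, with $f'$ bounded near $0$, which yields the estimate $|f(z)|\le C|z|^{-4}$ for large $|z|$ stated in the excerpt. By Liouville's theorem $f\equiv 0$ on $\C$ (and $f'\equiv0$ on $\C$).

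Finally I would translate $f\equiv 0$ back into the vanishing of $k_0$. Writing $k_0 = h_0 + b_0$ in the conformal frame $V_1,V_2$, the real and imaginary parts of $\tfrac{L-N}{2}-iM$ give the two independent entries of the trace-free symmetric form $h_0$ (namely $h_0(V_1,V_1)=\tfrac{L-N}{2}$ and $h_0(V_1,V_2)=M$, after dividing by $E$), while the real and imaginary parts of $A-iB$ give the entries of $b_0$ by \eqref{poppo}. Thus $f\equiv0$ is precisely the statement that both entries of $k_0$ vanish identically on $\Sigma$, i.e. $k_0=0$.

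The main obstacle is the bookkeeping in the second step: checking carefully that the extra piece $A-iB$, which involves the rotation $q_H$ and the surface restriction of $\vartheta\otimes\vartheta$, really does obey the same $z^{-4}$ quadratic-differential transformation rule as the Hopf term, so that the combined $f$ is a globally defined section and Liouville applies. Once that transformation law is in hand — and it follows from the fact that $q_H$ depends only on the (chart-independent) constant $H$ and that $\vartheta\otimes\vartheta$ is an intrinsic tensor on $\Sigma$ — the decay estimate and the conclusion are immediate. (The paper explicitly says it skips these chart computations, so in the write-up I would state the transformation law, invoke Liouville, and read off $k_0=0$ from \eqref{poi} and \eqref{poppo}.)
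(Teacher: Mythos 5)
Your proposal is correct and follows essentially the same route as the paper's (omitted) argument: two conformal charts related by $z\mapsto 1/z$, the observation that both the Hopf term $\tfrac{L-N}{2}-iM$ and the extra term $A-iB$ pick up the factor $z^{-4}$ under the change of chart (the paper phrases this only as the decay estimate $|f(z)|\le C|z|^{-4}$ obtained from the frame relation $V_1-iV_2=-z^{-2}(\widehat V_1-i\widehat V_2)$), then Liouville and the identification of the real and imaginary parts of $f$ with the entries of $k_0$ in the conformal frame. No gaps to report.
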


In the rest of this section, we show how to deduce from the equation $k_0=0$
that any topological sphere is congruent to a sphere $\Sigma_R$.
Differently from \cite{AR}, we do not use the fact that the isometry group of
$H^1$ is four-dimensional.

Let $\mathfrak h$ be the Lie algebra of $H^1$ and let 
$\langle\cdot,\cdot\rangle$ be the scalar product making $X,Y,T$ orthonormal.  
We denote by $S^2 = \{\nu\in\mathfrak h: |\nu| =
\sqrt{\langle\nu,\nu\rangle}=1\}$ the unit
sphere in $\mathfrak h$. 
For any $p\in H^1$, let $\tau^p: H^1\to H^1$ be the left-translation
$\tau^p (q) = p^{-1}\cdot q$ by the inverse of $p$, where $\cdot$ is the group
law of $H^1$, and
denote by $\tau_*^p\in \Hom(T_p H^1; 
\mathfrak h)$ its differential. 

For any   point $(p,\nu) \in H^1\times S^2$ there is
a unique   $\enn \in T_p H^1$ such that   $\nu = \tau^p_*\enn
$ and we define  $T^\nu_p H^1 = \{ W \in T_p H^1
: \langle W,\enn\rangle = 0 \}$. Depending on the point $(p,\nu)$ and on the
parameters $H,\tau \in\R$, with $H^2+\tau^2\neq 0$, below we define the linear
operator $\L_{H}\in \Hom( T_p^\nu H^1;  T_\nu S^2)$. The definition is
motivated by the proof of Proposition \ref{badpete}. For any
$W\in T_p^\nu M$, we let 
\[
 \L_{H} W = \tau_*^p\Big(H W -\frac{2\tau^2}{\sqrt{H^2+\tau^2}}
\q_{H}  ( \vartheta\otimes\vartheta )_0 \q_H^{-1} W\Big) +
(\nabla_W\tau_*^p)(\enn),
\]
where $\nabla_W\tau_*^p\in \Hom(T_p H^1; \mathfrak h)$ is the  covariant
derivative of $\tau_*^p$ in the direction $W$
and  the
trace-free operator $ ( \vartheta\otimes\vartheta )_0 \in\Hom(
T^\nu_p H^1; T^\nu_p H^1)$ is 
\[
 ( \vartheta\otimes\vartheta )_0 =  \vartheta\otimes\vartheta -\frac 1 2 
\mathrm{tr}(\vartheta\otimes\vartheta )\mathrm{Id}.
\] 
The operator $q_{H}\in\Hom(T^\nu_p H^1 ;  T^\nu_p H^1)$ is
the rotation by the angle $\alpha_H$ in  \eqref{alpha_H}.  
The operator $
 \L_{H } $ is well-defined, i.e., $\L_{H } W \in\mathfrak h$ and 
$\langle \L_{H } W,\nu\rangle =0$ for any $W \in T_p^\nu H^1$. This can be
checked using the identity $|\enn|=1$ and working with the formula
\[
(\nabla _ W\tau_*^p)(\enn) = \sum_{i=1}^3 \langle \enn, \nabla_W Y_i\rangle
Y_i(0),
\]
where $Y_1,Y_2,Y_3$ is any frame of orthonormal left-invariant vector fields.

Finally, for any point $(p,\nu) \in H^1\times S^2$, define 
\[
 \mathcal E_{H }(p,\nu) = \big\{ (W, \mathcal L_{H} W) : W\in T_p^\nu
H^1\big\}\subset T_{p}  H^1\times T_{ \nu  } S^2 .
\]
Then $(p,\nu)\mapsto 
 \mathcal E_{H }(p,\nu)$ is a distribution of two-dimensional planes in
$H^1\times S^2$.
The distribution $\mathcal E_{H }$ origins from CMC surfaces with mean
curvature $H$ and vanishing $k_0$.

Let $\Sigma$ be a smooth oriented   surface immersed in $H^1$  
given by a parameterization  $F\in C^\infty(D; H^1)$ 
where $D\subset\C$ is an open set.
We denote by $\enn(F(z)) \in T_p H^1$, with $p=F(z)$, the unit normal of
$\Sigma$ at the point $z \in D$.   
The normal section is given 
by the mapping $G: D\to S^2$ defined by $G(z) =
\tau_*^{F(z)} \enn(F(z))$, and we can define the Gauss section $\Phi:D \to
H^1\times S^2$ letting $\Phi(z) = (F(z), G(z))$.
Then $\overline{\Sigma} = \Phi(D)$ is a two-dimensional immersed surface in
$H^1\times
S^2$, called the \emph{Gauss extension} of $\Sigma$.

\begin{proposition}\label{badpete}
 Let $\Sigma$ be an oriented   surface immersed in $H^1$ with
constant  mean curvature $H$ and vanishing   $k_0$. Then the
Gauss extension $\overline{\Sigma}$ is an
integral surface of the distribution $\mathcal E_{H}$ in $H^1\times S^2$.
\end{proposition}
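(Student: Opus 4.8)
The plan is to compute the differential of the Gauss section $\Phi = (F, G)\colon D \to H^1\times S^2$ and show that, at every $z\in D$, the image $\Phi_*(T_z D)$ coincides with the plane $\mathcal E_H(\Phi(z))$. Since $\overline\Sigma$ is two-dimensional and $\mathcal E_H$ is a distribution of two-planes, it suffices to verify the inclusion $\Phi_*(T_z D)\subset \mathcal E_H(\Phi(z))$; equivalently, writing $V_i = F_*(\partial/\partial x_i)$ for a conformal frame, I must show that
\[
 G_*(V_i) = \mathcal L_H\, V_i \qquad \text{for } i=1,2,
\]
where both sides are regarded as elements of $T_{G(z)}S^2\subset\mathfrak h$. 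By linearity it is in fact enough to check this for $V_1$ and $V_2$ separately, and by the conformal structure one can further reduce to verifying the complex identity obtained by combining the two real equations into the $\bar Z$-derivative used above.

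First I would unwind $G_*(W) = (\nabla_W\tau_*^{F})(\enn) + \tau_*^{F}(\nabla_W\enn)$ for $W\in T_pH^1$ tangent to $\Sigma$; here $\nabla_W\enn = h(W)$ is exactly the shape operator of $\Sigma$ by definition. So
\[
 G_*(W) = \tau_*^{F}\big(h(W)\big) + (\nabla_W\tau_*^{F})(\enn).
\]
Comparing with the definition of $\mathcal L_H$, the claim $G_*(W) = \mathcal L_H W$ reduces to the purely pointwise (algebraic) identity on $T_p\Sigma$
\[
 h(W) = H\,W - \frac{2\tau^2}{\sqrt{H^2+\tau^2}}\, q_H\circ(\vartheta\otimes\vartheta)_0\circ q_H^{-1}\,W,
\]
after the $\tau_*^F$ and the covariant-derivative-of-translation terms on both sides are matched (the second term $(\nabla_W\tau_*^F)(\enn)$ is common to both sides and cancels). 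But $h(W) - H W = h_0(W) - \tfrac12(\operatorname{tr} h - 2H)W$; since $H$ is the mean curvature, $\operatorname{tr} h = 2EH$ in the conformal frame — more precisely $\tfrac12\operatorname{tr} h = H$ when $h$ is viewed as the endomorphism — so $h(W) - HW = h_0(W)$. Hence the desired identity is precisely
\[
 h_0 = -\,\frac{2\tau^2}{\sqrt{H^2+\tau^2}}\, q_H\circ(\vartheta\otimes\vartheta)_0\circ q_H^{-1},
\]
i.e. $k_0 = h_0 + \tfrac{2\tau^2}{\sqrt{H^2+\tau^2}} q_H(\vartheta\otimes\vartheta)_0 q_H^{-1} = 0$, which is the hypothesis (Theorem~\ref{5.5} or the assumption on $\Sigma$).

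The remaining genuine content is therefore the bookkeeping: checking that $\mathcal L_H W$ really lands in $\mathfrak h$ with $\langle\mathcal L_H W,\nu\rangle = 0$ (so that it is a legitimate tangent vector to $S^2$ at $\nu$), and checking that the ``$(\nabla_W\tau_*^p)(\enn)$'' appearing in the definition of $\mathcal L_H$ agrees with the $\nabla_W\tau_*^F$ term produced by differentiating $G(z) = \tau_*^{F(z)}\enn(F(z))$. Both are handled by the formula $(\nabla_W\tau_*^p)(\enn) = \sum_{i=1}^3\langle\enn,\nabla_W Y_i\rangle Y_i(0)$ recorded just before the statement, together with $W\langle\enn,\enn\rangle = 0$, i.e. $\langle\nabla_W\enn,\enn\rangle=0$, which forces the $\mathfrak h$-component along $\nu$ to vanish. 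I expect the main obstacle to be purely notational: one must be careful that $h$ in the definition of $k$ (hence of $\mathcal L_H$) is the endomorphism normalized so that $\tfrac12\operatorname{tr} h$ equals the mean curvature $H$, matching the conformal-frame normalization $H = (h_{11}+h_{22})/(2E)$, so that the substitution $h(W) = HW + h_0(W)$ is valid; once this normalization is pinned down, the proof is the short chain of identifications above.
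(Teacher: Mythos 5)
Your proposal is correct and follows essentially the same route as the paper: differentiate the Gauss section to get $G_*(W)=\tau^F_*(h(W))+(\nabla_W\tau_*^F)(\enn)$, and then observe that $k_0=0$ is exactly the identity $h=H\,\mathrm{Id}-\frac{2\tau^2}{\sqrt{H^2+\tau^2}}\,q_H\circ(\vartheta\otimes\vartheta)_0\circ q_H^{-1}$, so that the tangent sections of $\overline\Sigma$ are precisely the pairs $(W,\mathcal L_H W)$. Your extra care about the normalization of $h$ (trace of the endomorphism equal to $2H$) and about the well-definedness of $\mathcal L_H W$ matches the remarks the paper makes before the statement, so no gap remains.
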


\begin{proof} Let $\enn$ be the unit normal to $\Sigma$. For any tangent section
$W\in \Gamma( T\Sigma)$, we have 
\[
\begin{split}
 W ( \tau^F_* ( \enn  ) ) & =  \tau^F_*
(\nabla_W \enn  ) + (\nabla_W \tau_*^F)(\enn)
\\
&
      = \tau^F_* ( h(W)) + (\nabla_W \tau_*^F)(\enn),
\end{split}
\]
where $h(W) = \nabla_W\enn$ is the shape operator. Therefore,  the
set of all sections of the 
tangent bundle of $\overline{\Sigma}$   is 
\[
\Gamma(  T\overline{\Sigma})  = \Big\{\Big( W, \tau ^F_*( h(W) )+(\nabla_W
\tau_*^F) (\enn) \Big)   :  W \in
\Gamma(T\Sigma)  \Big\}. 
\]

The  equation $ k_0=0$ is equivalent to
$ h = H\mathrm{Id} -b_0$ where, by \eqref{b_0},
\[
b_0 =\frac{2 \tau^2}{\sqrt{H^2+\tau^2}} q_{H } 
\Big(\vartheta\otimes\vartheta -
\frac{\mathrm{tr}(\vartheta\otimes\vartheta)}{2} \mathrm{Id}\Big)q_{H } ^{-1},
\]
and thus the sections of  $\overline{\Sigma}$ are of the form
\[
 (W,\mathcal L_{H} W) \in \Gamma(T\overline{\Sigma})\quad
 \textrm{with}\quad W \in \Gamma(T\Sigma).
\]
This concludes the proof.
\end{proof}

\begin{theorem}\label{5.9}
 Let 
  $\Sigma$  be a topological sphere in $ H^1$
with constant mean curvature $H$. Then  there exist a left translation  
$\iota $ and
$R>0$ such that $\iota(\Sigma) = \Sigma_R$.

\end{theorem}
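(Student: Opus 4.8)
The plan is to combine the rigidity established in Theorem \ref{5.5} with the geometry of the Gauss extension via Proposition \ref{badpete}. Since $\Sigma$ is a topological CMC sphere, Theorem \ref{5.5} gives $k_0=0$ on $\Sigma$, so Proposition \ref{badpete} applies and the Gauss extension $\overline\Sigma \subset H^1\times S^2$ is an integral surface of the two-plane distribution $\mathcal E_H$. The key point is that $\mathcal E_H$ is a \emph{single} distribution depending only on the parameters $H,\tau$ (equivalently on $R=1/(\varepsilon H)$), and we have an explicit reference integral surface: the Gauss extension $\overline{\Sigma_R}$ of the model sphere $\Sigma_R$, which is an integral surface of $\mathcal E_H$ by Proposition \ref{k_0=0_for_S_R} together with Proposition \ref{badpete}. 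So the strategy is an ODE/uniqueness argument: two connected integral surfaces of $\mathcal E_H$ through the same point of $H^1\times S^2$ coincide, up to the ambiguity coming from the isometries of $H^1$ that preserve the distribution.

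First I would record that left translations act on $H^1\times S^2$ by $\iota \times \mathrm{id}$ (since left-invariant data — the metric, $\vartheta$, the connection — are preserved, the normal section $G$ is unchanged), and that this action maps integral surfaces of $\mathcal E_H$ to integral surfaces of $\mathcal E_H$. Next, fix the north pole $N$ of $\Sigma$ (a point where the tangent plane is horizontal, which exists for topological reasons, or more simply any point) and a point $p_0 \in \Sigma_R$ with the same ``tilt'' of the tangent plane relative to $T$; using a left translation $\iota$ I can arrange $\iota(N) = p_0$ and, since the orthogonal group acting on the fiber... actually $S^2$-rotations are \emph{not} available, so I must instead use that the normal direction at $N$ can be matched to that at $p_0$ by a left translation alone, because the set of achievable $(p,\nu)$ under left translations is three-dimensional and the relevant constraint (angle between $\nu$ and $T$) is one-dimensional and matches between $\Sigma$ and $\Sigma_R$ by the already-known structure of CMC spheres. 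Hence $\iota(\Sigma)$ and $\Sigma_R$ are two integral surfaces of $\mathcal E_H$ whose Gauss extensions pass through the common point $(p_0,\nu_0) \in H^1\times S^2$.

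The core step is then a Frobenius-type uniqueness statement: \emph{a connected integral surface of the distribution $\mathcal E_H$ is determined by one of its points.} This requires checking that $\mathcal E_H$ is involutive along integral surfaces (which is guaranteed a posteriori by the existence of the family $\overline{\Sigma_R}$ and its left translates foliating the relevant region) and then invoking uniqueness for the associated overdetermined system. Concretely, I would parametrize both $\iota(\Sigma)$ and $\Sigma_R$ near the common point by solving the ODE system for curves tangent to $\mathcal E_H$, or equivalently note that the Gauss map $\Phi$ satisfies a first-order PDE of the form $\partial_{x_j}\Phi = (\text{vector determined by }\Phi)$ coming from $\mathcal E_H$, and apply the standard uniqueness theorem. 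It follows that $\iota(\overline\Sigma) = \overline{\Sigma_R}$ on a neighborhood, hence (by connectedness and real-analyticity, or by a continuation argument) globally, and projecting to the first factor $H^1$ gives $\iota(\Sigma) = \Sigma_R$.

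The main obstacle I expect is the fiber-matching at the base point: unlike the situation in \cite{AR}, where the four-dimensional isometry group of $H^1$ supplies a rotation in the $S^2$-fiber for free, here only the three-dimensional left-translation group is used, so one must verify carefully that the invariant ``angle with $T$'' of the unit normal at a suitably chosen point of an arbitrary CMC topological sphere already agrees with that of $\Sigma_R$ at its corresponding point — this is where the explicit description of $\Sigma_R$ (its principal curvatures and the $1$-form $\vartheta$ from Section \ref{TRE}) and the constraint $k_0=0$ must be leveraged to pin down $R$ and the matching point before invoking the uniqueness of integral surfaces. A secondary technical point is justifying that the integral surface through a point is unique and complete (no branching), which should follow from the smooth dependence of $\mathcal E_H$ on $(p,\nu)$ and the fact that the model integral surfaces $\overline{\Sigma_R}$, together with their left translates, already sweep out an open set of $H^1\times S^2$.
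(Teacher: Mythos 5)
Your proposal is essentially the paper's own proof: Theorem \ref{5.5} gives $k_0=0$, Proposition \ref{badpete} (together with Proposition \ref{k_0=0_for_S_R} for the model sphere) makes both Gauss extensions integral surfaces of the same distribution $\mathcal E_{H}$, a left translation produces a common point of the two extensions, and uniqueness/completeness of the integral surface through that point concludes. The only caution concerns the matching step: since a left translation leaves the frame-normal in $S^2$ unchanged, you should commit to your first option --- the point where the tangential projection of $T$ vanishes (it exists because $\Sigma$ is a topological sphere), whose normal $\pm T$ is matched to the north or south pole of $\Sigma_R$ --- rather than ``any point'' or an angle-with-$T$ matching, which would require the additional observation that the Gauss image of $\Sigma_R$ is rotationally symmetric and covers all of $S^2$.
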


\begin{proof} 
Let $H>0$   be the mean curvature of $\Sigma$, let
$R=1/ H\varepsilon$, and recall that the sphere $\Sigma_R$ has mean curvature
$H$. 

Let $T^\Sigma(p) \in T_p\Sigma$ be the orthogonal projection
of the vertical vector field  $T$ onto $T_p\Sigma$.  Since $\Sigma$ is a
topological
sphere,
there exists a point $p\in\Sigma$ such that $T^\Sigma(p) = 0$. This implies
that either  $T=\enn$
or $T=-\enn$ at the point $p$, where $\enn$ is the outer normal to $\Sigma$ at
$p$. Assume that $T = \enn$.

Let $\iota   $ be the left translation such that $\iota(p)
= N$, where $N$ is the north pole of $\Sigma_R$. At the point $N$ the vector $T
$ is  the outer normal to $\Sigma_R$. Since $\iota_* T = T$ (this holds for
any isometry),  we deduce that $\Sigma_R$ and $\iota(\Sigma)$ are two surfaces
such that:
\begin{itemize}
 \item [i)] They have both constant mean curvature $H$.
 \item [ii)] They have both vanishing   $k_0$, by Proposition 
\ref{k_0=0_for_S_R} and
Theorem \ref{5.5}.
 \item [iii)] $N\in \Sigma_R\cap \iota(\Sigma)$ with the
same (outer) normal at $N$.
\end{itemize}
Let $M_1=\overline{\Sigma}_R$ and $M_2=\overline{\iota(\Sigma)}$ be the Gauss
extensions of $\Sigma_R$ and $\iota(\Sigma)$, respectively. Let $\nu
=\tau^N_*\enn \in S^2$.
From i), ii) and Proposition \ref{badpete} it follows that $M_1$ and $M_2$ are
both integral surfaces of the distribution $\mathcal E_{H}$.
From iii), it follows that $(N,\nu)\in M_1\cap M_2$. Being the two surfaces
complete, this implies that
$M_1=M_2$ and thus $\Sigma_R = \iota(\Sigma)$. 

\end{proof}

\section{Quantitative stability of $\Sigma_R$ in vertical cylinders}
\setcounter{equation}{0}
  \label{SEI}

In this section, we prove a quantitative isoperimetric inequality for the CMC
spheres $\Sigma_R$ with respect to compact perturbations in vertical cylinders,
see Theorem \ref{thm:quant}.
This is a strong form of stability of $\Sigma_R$ in the northern  and
southern hemispheres.

A CMC surface $\Sigma$ in $H^1$ with normal $\enn$ is stable in an open region
$A\subset\Sigma$ if for any function $g \in C^\infty_c(A)$ with $\int_\Sigma g
d\A=0$,
where $\A$ is the Riemannian area measure of $\Sigma$, we have
\[
 \mathcal S(g) = \int_\Sigma \big\{|\nabla  g |^2 - 
\big(|h|^2+\Ric(\enn)\big)g^2\big\} d\A \geq 0. 
\]
The functional $\mathcal S(g)$ is the second variation, with fixed volume, of
the area of $\Sigma$ with respect to the infinitesimal deformation of $\Sigma$
in the direction $g\enn$. Above, $|\nabla   g|$ is the length of the
tangential gradient of $g$,  $|h|^2$ is the squared norm of the second
fundamental form of $\Sigma$ and $\Ric(\enn)$ is the Ricci curvature of $H^1$ in
the direction $\enn$.

The Jacobi operator associated with the second variation functional $\mathcal S$
is
\begin{equation*} \label{Jacobi}
 \mathcal L g = \Delta   g + (|h|^2 +\Ric(\enn))g   ,
\end{equation*}
where $\Delta $ is the Laplace-Beltrami operator of $\Sigma$. 
As a consequence of Theorem 1 in \cite{FCS}, if there exists a strictly
positive  solution 
 $g\in C^\infty(A)$ to equation $\mathcal L g =0$  on $A$,  then
$\Sigma$ is stable in $A$ (even without
the restriction $\int_A gd\mathcal A=0$).

Now consider in $H^1$ the right-invariant vector fields
\begin{equation*}\label{XYTright}
\widehat X  = \frac 1 \epsilon\Big( \frac{\partial}{\partial x}-\sigma y
\frac{\partial}{\partial
t}\Big),
\quad
   \widehat Y  = \frac 1 \epsilon\Big( \frac{\partial}{\partial y}  + \sigma
x\frac{\partial}{\partial
t}\Big),
\quad\textrm{and}\quad 
   \widehat   T  = \epsilon^2 \frac{\partial}{\partial t}.
\end{equation*}
These are generators of left-translations in $H^1$, and the functions
\[
 g_{\widehat X} = \langle \widehat X,\enn\rangle,\quad
 g_{\widehat Y} = \langle \widehat Y,\enn\rangle,\quad 
 g_{\widehat T} = \langle \widehat T,\enn\rangle 
\]
are solutions to $\mathcal L g=0$. By the previous
discussion, the CMC sphere
$\Sigma_R$ is stable in the hemispheres
\[
\begin{split}
 A _{ {\widehat X}} &  =  \big\{ (z,t) \in \Sigma_R : g_{\widehat
X}>0\big\},
\\
 A _{ {\widehat Y}} &  =  \big\{ (z,t) \in \Sigma_R : g_{\widehat
Y}>0\big\},
\\
 A _{ {\widehat T}} &  =  \big\{ (z,t) \in \Sigma_R : g_{\widehat
T}>0\big\}.
\end{split}
\]
In particular, $\Sigma_R$ is stable in the northern hemisphere  $A _{ {\widehat
T}}
= \{ (z,t) \in \Sigma_R : t>0\}$. 

In fact, we believe that the whole $\Sigma_R$ is stable. Actually, this would
follow from
the isoperimetric property for $\Sigma_R$. The proof of the stability of
$\Sigma_R$ requires a deeper analysis and it is not yet clear.
However, in the case of the northern (or southern) hemisphere we can prove a
strong
form of stability in terms of a quantitative isoperimetric inequality.
Some stability results in various sub-Riemannian settings have been recently
obtained in
\cite{Mo,HR1,HR2}.

For $R>0$, let $E_R\subset H^1$ be the open domain bounded by the CMC sphere
$\Sigma_R$, 
\[
        E_R=\{(z,t)\in\H^1 : |t|<f(|z|;R),\ |z|<R\},
\]  
where $f(\cdot;R)$ is the profile function of $\Sigma_R$ in \eqref{fuf}.
For   $0 \leq \delta<R$, we define the half-cylinder
\[
    C_{R,\delta}=\{(z,t)\in\H^1 : |z|<R\text{ and }t>  t_{R,\delta}\},
\]
where $t_{R,\delta}=f(r_{R,\delta};R)$  and $r_{R,\delta}=R-\delta$.
In the following, we use the   short notation  
\begin{equation}
\begin{split}
\label{eq:k}
k_{R\varepsilon\tau} & = {\varepsilon^3\omega(R)\sqrt{R}}, 
\\
C_{R\varepsilon\tau} & = 
\frac{1}{4\pi\varepsilon
R^3(Rk_{R\varepsilon\tau} +f(0; R))},
\\
D_{R\varepsilon\tau} & = \frac{1}{12\varepsilon\pi^2
R^5(4Rk_{R\varepsilon\tau}^2+  f(0;R)^2)} .
\end{split}
\end{equation}

We denote by $\A$ the Riemannian surface-area measure in
$H^1$.

\begin{theorem}   \label{thm:quant}  
Let $R>0$, $0\leq \delta<R$, $\epsilon>0$,  
and $\tau \in\R$ be as in  \eqref{tau-sigma}.
Let   $E\subset \H^1$ be a smooth open set such that   
$\mathcal L^3(E) =\mathcal L^3( E_R)$ and  $\Sigma = \partial E$.
\begin{itemize}
 \item [(i)] If $E\Delta E_R \subset\subset C_{R,\delta}$ with
  $0<\delta<R$ then  we have
\begin{equation}
\label{TP2}
\A(\Sigma  ) -\A( \Sigma_R)  \geq \sqrt{\delta} 
C_{R\varepsilon\tau} \mathcal L^{3}(E\Delta E_R)^2.
\end{equation}
 \item [(ii)] If 
$E\Delta E_R \subset\subset C_{R,0}$ then  we have
\begin{equation}
\label{TP}
\A(\Sigma) - \A(\Sigma_R)   \geq  D_{R\varepsilon\tau}  \mathcal
L^{3}(E\Delta E_R)^3.
\end{equation}
\end{itemize}
\end{theorem}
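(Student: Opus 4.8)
The proof is a calibration argument carried out inside the half--cylinder $C_{R,\delta}$, using a vector field adapted to the ``northern cap'' of $\Sigma_R$, followed by a geometric lower bound for the resulting area deficit. \emph{The calibration.} Recall formula \eqref{ENNE} for the exterior unit normal $\enn$ to the foliation $\{\Sigma_R\}$, in which $p=p(r;R(r,t))$ depends on the point through $R(r,t)$. Freezing the radius, define on $C_{R,\delta}$ the vector field
\[
\mathcal N_R(z,t)=\frac1R\Big\{(x+yp(r;R))\,X+(y-xp(r;R))\,Y+\frac{p(r;R)}{\tau\varepsilon}\,T\Big\},\qquad r=|z|<R,
\]
which is the exterior unit normal of the graph $\{t=f(|z|;R)\}$ extended so as to be invariant under the vertical translations $(z,t)\mapsto(z,t+s)$. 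Since $1+p(r;R)^2=\omega(R)^2/\omega(r)^2$, the same length computation that shows $|\enn|=1$ gives $|\mathcal N_R|_g\equiv1$ on $C_{R,\delta}$, and $\mathcal N_R=\enn$ along $\Sigma_R\cap C_{R,\delta}$. Because the Riemannian volume of $(H^1,g)$ equals the Lebesgue measure $\mathcal L^3$, the Riemannian divergence coincides with the Euclidean divergence in the coordinates $(x,y,t)$; as $\mathcal N_R$ is independent of $t$ and the $p_r$--terms of its first two components cancel by radial symmetry, one finds $\div \mathcal N_R\equiv 2H$ throughout $C_{R,\delta}$ (here $2H=2/(\varepsilon R)$ by \eqref{eHR=1}). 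Thus $\mathcal N_R$ calibrates $\Sigma_R$ in $C_{R,\delta}$.

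\emph{The area deficit.} Let $\nu$ be the exterior unit normal of $\Sigma=\partial E$. Since $E=E_R$ near $\partial C_{R,\delta}$, since $E\Delta E_R\subset\subset C_{R,\delta}$, and since $\mathcal L^3(E)=\mathcal L^3(E_R)$, applying the divergence theorem to $\mathcal N_R$ on $E\cap C_{R,\delta}$ and on $E_R\cap C_{R,\delta}$ and subtracting (the fluxes through $\partial C_{R,\delta}$ cancel) gives $\int_{\Sigma\cap C_{R,\delta}}\langle\mathcal N_R,\nu\rangle\,d\A=\A(\Sigma_R\cap C_{R,\delta})$, whence, using that $\Sigma=\Sigma_R$ outside $C_{R,\delta}$,
\[
\A(\Sigma)-\A(\Sigma_R)=\int_{\Sigma\cap C_{R,\delta}}\bigl(1-\langle\mathcal N_R,\nu\rangle\bigr)\,d\A=\frac12\int_{\Sigma\cap C_{R,\delta}}|\mathcal N_R-\nu|_g^2\,d\A\ \ge\ 0 .
\]
This already reproves that $\Sigma_R$ minimizes area against such competitors; the content of the theorem is to bound the right--hand side from below by a power of $\mathcal L^3(E\Delta E_R)$.

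\emph{The quantitative estimate and the main obstacle.} This last step is where the real work lies. For $\delta>0$, \eqref{f_r} bounds the slope of the profile on $C_{R,\delta}$ by $|f_r(R-\delta;R)|\le k_{R\varepsilon\tau}/\sqrt\delta$, so $\mathcal N_R$ is uniformly transverse to the horizontal distribution there, with vertical component (in $g$) bounded below by a multiple of $\sqrt\delta/k_{R\varepsilon\tau}$; hence $1-\langle\mathcal N_R,\nu\rangle$ controls, uniformly, the squared length of the horizontal part of $\nu-\mathcal N_R$. Applying the divergence theorem to $\mathcal N_R$ on the excess region $(E\setminus E_R)\cap C_{R,\delta}$ and on the deficit region $(E_R\setminus E)\cap C_{R,\delta}$, bounding the boundary integrals by $\langle\mathcal N_R,\nu\rangle\le1$, and slicing $E\Delta E_R$ along vertical lines, the problem reduces to a Poincar\'e/Cauchy--Schwarz inequality on the disk $\{|z|<R\}$ (area $\le\pi R^2$, ambient vertical scale $\le Rk_{R\varepsilon\tau}+f(0;R)$), which yields \eqref{TP2}. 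For $\delta=0$ the slope bound degenerates at the equator $|z|=R$, where $\mathcal N_R$ becomes horizontal and its vertical component vanishes like $\sqrt{R^2-|z|^2}$; one then either splits $E\Delta E_R$ into its part in $C_{R,\delta}$ (handled by \eqref{TP2}) and its part in the collar $C_{R,0}\setminus C_{R,\delta}$, estimates the collar contribution directly, and optimizes over $\delta\in(0,R)$, or runs the previous argument with the degenerate weight coming from the vanishing vertical component; the trade--off produces the exponent $3$ and the constant $D_{R\varepsilon\tau}$, giving \eqref{TP}. The main obstacle is precisely this conversion of the scale--free $L^2$ normal deficit into a volume estimate with the sharp exponents ($2$ for $\delta>0$, $3$ for $\delta=0$) and the explicit constants $C_{R\varepsilon\tau},D_{R\varepsilon\tau}$, and in particular the control of the equatorial degeneration in part (ii); the calibration identity itself is essentially forced once $\mathcal N_R$ is written down.
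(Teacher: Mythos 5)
Your calibration step is correct as far as it goes: the vertically invariant field $\mathcal N_R$ obtained by freezing the radius does have unit length, agrees with the normal of $\Sigma_R$ on the cap, and (since the Riemannian volume is Lebesgue measure and the rotational term is divergence-free) satisfies $\div\,\mathcal N_R\equiv 2H$ on the cylinder, because its leaves are vertical translates of the graph $t=f(|z|;R)$, all of mean curvature $H$. This yields the identity $\A(\Sigma)-\A(\Sigma_R)=\int_{\Sigma\cap C_{R,\delta}}(1-\langle\mathcal N_R,\nu\rangle)\,d\A\ge0$, i.e.\ the \emph{qualitative} minimality of $\Sigma_R$ against these competitors. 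But the theorem is the quantitative statement, and that is exactly the part you do not prove: your reduction of \eqref{TP2} to ``a Poincar\'e/Cauchy--Schwarz inequality on the disk'' and of \eqref{TP} to ``estimating the collar and optimizing over $\delta$'' is a plan, not an argument. Worse, with a constant-divergence field the route is genuinely harder than you suggest: all the information sits in the boundary term $\int_\Sigma(1-\langle\mathcal N_R,\nu\rangle)\,d\A$, and this integrand vanishes identically on any portion of $\Sigma$ that is a vertical translate of a leaf. A competitor that displaces a large sub-disk of the cap vertically, with the transition concentrated in a thin annular wall, has its entire deficit on the wall, so bounding the deficit below by $\mathcal L^3(E\Delta E_R)^2$ or $^3$ would require a relative isoperimetric-type argument on the disk (relating the perimeter of the displaced region to its area and to the displaced volume), which you neither state nor prove; your transversality remark ($1-\langle\mathcal N_R,\nu\rangle$ controls the horizontal part of $\nu-\mathcal N_R$) does not bridge this gap.

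The paper's proof uses a different mechanism, which is the missing key idea. It does not take a constant-divergence calibration but a \emph{sub}-calibration: Theorem \ref{thm:folsub} foliates $C_{R,\delta}\cap E_R$ by CMC graphs $S_\lambda$, $\lambda>R$, whose mean curvature $H_\lambda=1/(\varepsilon\lambda)$ is \emph{strictly smaller} than $H$, with the explicit depth-dependent deficits \eqref{H_R_1} (linear in $t$, with the factor $\sqrt\delta$) and \eqref{H_R_2} (quadratic in $t$, obtained by integrating $g_z'\ge\sqrt{g_z-R}/k_{R\varepsilon\tau}$). For the resulting field $V=-\nabla u/|\nabla u|$ one has $\tfrac12\div V\le 1/(\varepsilon R)$ with equality outside $E_R$, and the Gauss--Green argument produces the \emph{bulk} lower bound $\A(\Sigma)-\A(\Sigma_R)\ge\frac{2}{\varepsilon R}\int_{E_R\setminus E}\bigl(1-\frac{\varepsilon R}{2}\div V\bigr)\,d\mathcal L^3$; Fubini, the monotonicity of $t\mapsto\div V(z,t)$, and H\"older on the disk of radius $R$ then convert this weighted volume of $E_R\setminus E$ into $\sqrt\delta\,C_{R\varepsilon\tau}\,\mathcal L^3(E\Delta E_R)^2$, respectively $D_{R\varepsilon\tau}\,\mathcal L^3(E\Delta E_R)^3$. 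With your field this bulk term is identically zero, so nothing in your proposal replaces the quantitative content of Theorem \ref{thm:folsub}; to complete your argument you would either have to construct such an interior foliation with curvature estimates (i.e.\ reproduce the paper's lemma) or supply the genuinely new boundary-deficit-versus-volume inequality that your sketch presupposes.
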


\begin{remark}\label{6.2}
When $\Sigma\subset H^1$ is a $t$-graph, $\Sigma = \{ (z,f(z)) \in H^1: z\in
D\}$ for some $f\in C^1(D)$, from \eqref{AER1} and \eqref{AER2} we see that 
the Riemannian area of $\Sigma $ is
\[
\mathcal A(\Sigma) =   \frac 1 \epsilon  \int_{D }    
\sqrt {\epsilon ^6+  
|\nabla f| ^ 2 + \sigma^2 |z|^2+2\sigma(xf_y-y f_x)}\ dz ,
\]
and so
\[
 \lim_{\epsilon\to 0}  \epsilon \mathcal A(\Sigma) =    \int_{D }  
\sqrt { 
|\nabla f| ^ 2 + \sigma^2 |z|^2+2\sigma(xf_y-y f_x)}\ dz.
\]
The   integral in the right-hand side is the sub-Riemannian area of $\Sigma$.

On the other hand, the constants $C_{R\epsilon\tau}$ and $D_{R\epsilon\tau}$
in \eqref{eq:k} are also asymptotic to $1/\epsilon$. Thus,
multiplied by $\epsilon$, inequalities \eqref{TP2} and \eqref{TP} pass to the
sub-Riemannian limit, see \cite{FLM}.
\end{remark}

The proof of Theorem \ref{thm:quant} is based on the foliation of the cylinder
$C_{R,\delta}$
by a family of CMC surfaces with quantitative estimates on the mean curvature.

\begin{theorem}
\label{thm:folsub}
For any $R>0$ and $0\leq\delta<R$, there exists a continuous  function
$u: C_{R,\delta}\to\R$
with level sets $S_\lambda=\big\{(z,t)\in C_{R,\delta} : u(z,t)=\lambda\big\}$,
$\lambda\in\R$,  such that the following claims hold:
\begin{itemize}
\item[(i)]   $u\in C^1(C_{R,\delta}\cap E_R)
\cap C^1(C_{R,\delta}\setminus E_R)$ and the normalized Riemannian  gradient
$\nabla u/|\nabla u|$ is
continuously defined on $C_{R,\delta}$.

\item[(ii)]   $\bigcup _{\lambda>R} S_\lambda =C_{R,\delta} \cap E_R$ and
$\bigcup
_{ \lambda\leq R} S_\lambda =C_{R,\delta} \setminus  E_R$.

\item[(iii)] Each $S_\lambda$ is a smooth  surface  with constant mean curvature
$H_{\lambda}=1/(\varepsilon \lambda)$ for 
$\lambda>R$ and $H_{\lambda} = 1/(\varepsilon R)$ for  $\lambda\leq R$.

\item[(iv)] For any point $(z,f(|z|;R)-t) \in S_\lambda$ with $\lambda>R$ we
have
\begin{equation}
 \label{H_R_2}
  1-\varepsilon R H_{\lambda}  (z,f(|z|;R)-t)\geq
\frac{t^2}{4Rk_{R\varepsilon\tau}^2+ f(0;R)^2} ,
\quad 
\textrm{when }\delta=0,
\end{equation}
and 
\begin{equation}
 \label{H_R_1}
  1-\varepsilon R H_{ \lambda} (z,f(|z|;R)-t)\geq
\frac{ \sqrt{\delta} t} {Rk_{R\varepsilon\tau}+ f(0; R)},
\quad
\textrm{when }0<\delta<R.
\end{equation}
\end{itemize}
\end{theorem}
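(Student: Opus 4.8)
The plan is to build the foliating function $u$ explicitly by "sliding" the CMC spheres $\Sigma_\lambda$ downwards so that they fit inside the cylinder $C_{R,\delta}$ and match up continuously with vertical translates of $\Sigma_R$ below the equatorial disk. Concretely, for $\lambda > R$ I would take the northern cap of $\Sigma_\lambda$, i.e. the graph $t = f(|z|;\lambda)$ restricted to $|z| \le R - \delta$ (recall $f(\cdot;\lambda)$ is smooth on $[0,\lambda)$ by Theorem \ref{TOM} and $R-\delta < R < \lambda$), and translate it vertically by a parameter chosen so that at $|z| = R-\delta$ the cap attaches to the portion of $\partial C_{R,\delta}$; for $\lambda \le R$ I would use vertical translates of the northern cap of $\Sigma_R$ itself, with $H_\lambda = 1/(\varepsilon R)$ constant there. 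The function $u$ is then defined by declaring $S_\lambda$ to be this translated cap. Claims (ii) and (iii) are essentially built into the construction: the sets $S_\lambda$ with $\lambda > R$ fill the region inside $E_R$ (the caps are strictly "fatter" because $f(r;\lambda)$ is strictly increasing in $\lambda$ by the monotonicity shown in the proof of Proposition \ref{fol}), and the sets with $\lambda \le R$ fill the outside; each $S_\lambda$ is a piece of a genuine CMC surface, hence has the stated constant mean curvature.

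For claim (i), the regularity $u \in C^1(C_{R,\delta}\cap E_R) \cap C^1(C_{R,\delta}\setminus E_R)$ follows because on each of the two pieces $u$ is obtained by inverting a smooth, strictly monotone family of graphs (monotone in $\lambda$ by the same integration-by-parts argument as in Proposition \ref{fol}); the only subtlety is continuity of $\nabla u/|\nabla u|$ \emph{across} the gluing locus $\{t = f(|z|;R)\} \cap C_{R,\delta}$, i.e. across the equator disk of $\Sigma_R$. Here I would check that the translated caps of $\Sigma_\lambda$ approach, as $\lambda \downarrow R$, the cap of $\Sigma_R$ in $C^1$, so that the unit normal fields match; this is where the relation $\varepsilon H R = 1$ and the explicit formula \eqref{f_r} for $f_r$ enter, guaranteeing $f_r(R^-) = -\infty$ and hence a consistent (vertical) normal at the equator from both sides. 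The normalized gradient is then continuous even though $|\nabla u|$ itself may degenerate.

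The heart of the theorem — and the step I expect to be the main obstacle — is the quantitative lower bound on the mean-curvature defect, inequalities \eqref{H_R_2} and \eqref{H_R_1}. The point $(z, f(|z|;R) - t) \in S_\lambda$ lies a vertical distance $t > 0$ below the corresponding point of $\Sigma_R$, and one must translate "the cap has been pushed down by $t$" into "$\lambda$ has increased from $R$ by a definite amount", and then into "$1 - \varepsilon R H_\lambda = 1 - R/\lambda$ is bounded below". Since $f_R(r;R)$ blows up like $1/p(r;R) \sim 1/\sqrt{R^2 - r^2}$ near $r = R$ (see \eqref{f_R}, \eqref{ell}), the vertical displacement produced by a given increment $\lambda - R$ is largest near the equator and smallest near the pole; the two regimes $\delta = 0$ versus $\delta > 0$ reflect whether we are allowed to use points arbitrarily close to the equator. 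I would estimate $t = f(r;R) - \big(f(r;\lambda) - (\text{translation})\big)$ from below by a Taylor/mean-value estimate in $\lambda$, controlling $\partial_\lambda f(r;\lambda)$ from above uniformly over $r \le R-\delta$ using the explicit formula — this is where the constant $k_{R\varepsilon\tau} = \varepsilon^3 \omega(R)\sqrt{R}$ and the quantities $f(0;R)$ in \eqref{eq:k} will emerge, the $\sqrt{\delta}$ coming precisely from the $\sqrt{R^2 - r^2} \ge \sqrt{R^2 - (R-\delta)^2} \ge \sqrt{\delta R}$ bound available when $\delta > 0$, and the quadratic dependence $t^2$ in \eqref{H_R_2} coming from integrating the (now unbounded near $r=R$) derivative twice when $\delta = 0$. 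Once \eqref{H_R_2}–\eqref{H_R_1} are in hand, the remaining bookkeeping is a routine comparison of the explicit profiles.
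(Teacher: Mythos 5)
Your construction is the same as the paper's: inside $E_R$ the leaves are the northern caps of $\Sigma_\lambda$ translated vertically so as to pass through the circle $\{|z|=R-\delta,\ t=t_{R,\delta}\}$, outside they are upward vertical translates of the cap of $\Sigma_R$, and claim (iii) follows because vertical translations are isometries of $H^1$. The gap is in the step you yourself single out as the heart, claim (iv), and it also affects your argument for (ii). Since the anchoring translation depends on $\lambda$, the height of $S_\lambda$ over the radius $r$ is $f(r;\lambda)-f(R-\delta;\lambda)+t_{R,\delta}$, so the downward displacement $t$ is governed by the $\lambda$-derivative of the \emph{difference} $f(r;\lambda)-f(R-\delta;\lambda)$, not of $f(r;\lambda)$ alone. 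Your plan to bound $\partial_\lambda f(r;\lambda)$ ``uniformly over $r\le R-\delta$'' cannot work: by \eqref{f_R}, $f_R(r;\lambda)=\tau\varepsilon^4\lambda\big[\arctan p+1/p\big]\sim\tfrac{\pi}{2}\tau\varepsilon^4\lambda\to\infty$ as $\lambda\to\infty$, so no uniform bound exists, and a bound on $f_R$ alone ignores the $\lambda$-dependence of the translation in any case. The paper's proof uses \eqref{palix} to write
\[
f(r;\lambda)-f(R-\delta;\lambda)=\varepsilon^3\int_r^{R-\delta}\frac{s\,\omega(s)}{\sqrt{\lambda^2-s^2}}\,ds,
\]
in which the divergent parts cancel; differentiating under the integral gives the bound $\big|\partial_\lambda\big[f(r;\lambda)-f(R-\delta;\lambda)\big]\big|\le k_{R\varepsilon\tau}/\sqrt{\lambda-(R-\delta)}$ for \emph{all} $\lambda>R$, with $k_{R\varepsilon\tau}=\varepsilon^3\omega(R)\sqrt{R}$ as in \eqref{eq:k}. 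Writing $\lambda=g_z(t)$ this becomes the differential inequality $g_z'(t)\ge\sqrt{g_z(t)-(R-\delta)}\,/\,k_{R\varepsilon\tau}$, whose integration yields $\lambda-R\ge t^2/(4k_{R\varepsilon\tau}^2)$ when $\delta=0$ and $\lambda-R\ge\sqrt{\delta}\,t/k_{R\varepsilon\tau}$ when $\delta>0$; combined with $t\le f(0;R)$ this produces exactly the constants in \eqref{H_R_2}--\eqref{H_R_1}. Your heuristics (the $\sqrt{\delta}$ from staying $\delta$ away from the equator, the $t^2$ from the square-root degeneracy) point in the right direction, but without exploiting the cancellation in the anchored difference the large-$\lambda$ regime is out of control and the stated inequalities are not reachable.

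The same cancellation is what you need for (ii): the monotonicity you invoke, namely that $f(r;\lambda)$ increases in $\lambda$ (Proposition \ref{fol}), is the wrong one for the anchored caps. What is needed is that the anchored heights $f(r;\lambda)-f(R-\delta;\lambda)$ are strictly \emph{decreasing} in $\lambda$ (this is $\partial_\lambda F<0$ above) and tend to $0$ as $\lambda\to\infty$ (immediate from the integral formula), so that the leaves sweep monotonically from the cap of $\Sigma_R$ down to the bottom of the cylinder and each point of $C_{R,\delta}\cap E_R$ lies on exactly one leaf. Finally, in (i) the matching of $\nabla u/|\nabla u|$ happens across the cap $\{|z|<R-\delta,\ t=f(|z|;R)\}$, which does not meet the equator inside $C_{R,\delta}$; the relevant check is the $C^1$ convergence of the anchored caps as $\lambda\downarrow R$ (vertical translations preserve the normal), and the remark about a ``vertical normal at the equator'' is off target --- at the equator the normal of $\Sigma_R$ is horizontal, and that circle lies on $\partial C_{R,\delta}$, outside the domain where continuity is claimed.
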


\begin{proof}[Proof of Theorem \ref{thm:folsub}]
For points 
$(z,t) \in C_{R ,\delta}\setminus E_R$ we let
\begin{equation*}
\label{eq:uab}
u(z,t)=f(|z|; R)-t+R.
\end{equation*}
Then $u$   satisfies $u(z,t)\leq R$ for $ t \geq f(|z|;R)$
and $u(z,t)=R$ if $t=f(|z|;R)$. In order to define $u$ in the set
  $C_{R,\delta}\cap E_R$,
for $ 0\leq r<r_{R,\delta}$, $t_{R,\delta}<t<f(r; R)$,
and  $\lambda>R$ we consider  the function
\begin{equation}\label{fulla}
   F (r,t,\lambda)=f(r;\lambda)-f(r_{R,\delta};\lambda)+t_{R,\delta}-t .
\end{equation}
The function $F$ also depends on $\delta$.
We claim that for any point $(z,t)\in
C_{R,\delta}\cap E_R$ there exists a unique
$\lambda>R$ such that $F (|z|,t,\lambda)=0$.
In this case, we can define 
\begin{equation}
\label{eq:ubel}
u(z,t)=\lambda\quad\text{if and only if}\quad F (|z|,t,\lambda)=0.
\end{equation}
We prove the previous claim.
Let $(z,t)\in C_{R,\delta}\cap E_R$ and use the notation
$r=|z|$. First of all, we have  
\begin{equation}
\label{eq:cla}
\lim_{\lambda\to R^+}F (r,t,\lambda)=f(r;R)-t>0.
\end{equation}
We claim that we also have 
\begin{equation}
\label{eq:claim1}
\lim_{\lambda\to\infty}F (r,t,\lambda)=t_{R,\delta}-t<0.
\end{equation}
To prove this, we let
$f(r;\lambda)-f(r_{R,\delta};\lambda)=\frac{\varepsilon^2}{2\tau}[
f_1(\lambda)+f_2(\lambda)]$, where
\[\begin{split}
f_1(\lambda)&=\omega(\lambda)^2\Big[
\arctan(p(r;\lambda))-\arctan(p(r_{R,\delta};\lambda))\Big],\\
f_2(\lambda)&=\omega(r)^2\Big(p(r;\lambda)-p(r_{R,\delta};\lambda)\Big).
\end{split}
\]
Using the asymptotic approximation 
\[
\arctan(s)=\frac{\pi}{2}-\frac{1}{s}+\frac{1}{3s^3}+o\Big(\frac{1}{s^3}\Big),
\quad\text{as }s\to\infty,
\]
we obtain for $\lambda\to\infty$ 
\[
\begin{split}
f_1(\lambda)&=\lambda\varepsilon\tau(\omega(r_{R,\delta})-\omega(r)))+o(1),
\\
f_2(\lambda)&=\lambda\varepsilon\tau(\omega(r)-\omega(r_{R,\delta}))+o(1),
\end{split}
\]
and thus $f(r;\lambda)-f(r_{R,\delta};\lambda)=  o(1)$,
where $o(1)\to0$ as $\lambda\to\infty$.
Since $\lambda\mapsto F (r,t,\lambda)$ is continuous, \eqref{eq:cla} and
\eqref{eq:claim1} imply the existence of a solution $\lambda$ of
$F (r,t,\lambda)=0$. 
The uniqueness follows from  $\partial_\lambda
F(r,t,\lambda) <0$. 
This inequality can be proved starting from \eqref{f_R}  and we   skip the
details.
This finishes the proof of our initial claim.

Claims (i) and (ii) can be checked from the construction of $u$.
Claim (iii) follows, by Theorem \ref{3.1}, from the fact that $S_\lambda$ for
$\lambda>R$
is a vertical translation (this is an isometry of $H^1$) of the $t$-graph of
$z\mapsto f(z;\lambda)$.

We prove Claim (iv). For any  $(z,t)\in H^1$ such that  $r=|z|<r_{R,\delta}$
and 
$0\leq
t<f(r;R)-t_{R,\delta}$,  we
define
\begin{equation}
\label{pippo}
g_z(t)=u(z,f(r;R)-t)
=\lambda,
\end{equation}
where $\lambda\geq R$ is uniquely determined by the condition $(z,f(r;R)-t)\in
S_\lambda$.
Notice that $g_z(0)=u(z,f(r;R))=R$. 
We estimate the derivative of the  function $t\mapsto g_z(t)$.
From the identity $F(r,t,u(z,t)) = 0$, 
see \eqref{eq:ubel},  we compute $
\partial _t  u  (z,t) = (\partial_\lambda  F (r,t,u(z,t)) )^{-1}$ and
so, also using \eqref{fulla}, we find 
\begin{equation} \label{GG1}
  g'_z(t) = -  \partial_t u   (z, f(r;R)-t) =
\frac{-1}{\partial_\lambda F (r,f(r;R)-t,g_z(t))}.
\end{equation}
Now  from \eqref{palix} we compute  
\begin{equation} \label{GG2}
\begin{split}
 \partial _\lambda F (r,t,\lambda) & 
= -\e^ 3 \lambda
\int_r^{r_{R,\delta}} \frac{s\omega(s)}{(\lambda^2-s^2)^{3/2}}ds
\\
&
\geq -\e^ 3 \lambda \omega(r_{R,\delta}) \int_0^{r_{R,\delta}}
\frac{s }{(\lambda^2-s^2)^{3/2}}ds
\\
&
= -\e^ 3 
\omega(r_{R,\delta})\left[\frac{\lambda
}{\sqrt{\lambda^2 -  r_{R,\delta} ^2 } } -1\right ]
\\
&
 \geq -\e^3 \omega(R)
  \frac{\sqrt{R}}
 {\sqrt{\lambda - r_{R,\delta}}}.
\end{split}
\end{equation} 
In the last inequality, we used $r_{R,\delta} <R\leq \lambda$.
From \eqref{GG1}, \eqref{GG2} and with $k_{R\varepsilon\tau}$
as
in \eqref{eq:k}, we deduce that
\begin{equation}
\label{eq:disfz}
g_z'(t)\geq \frac{1} {k_{R\varepsilon\tau}} \sqrt{g_z(t)-r_{R,\delta}}.
\end{equation}
In the case  $\delta=0$,   \eqref{eq:disfz}  reads
$
g_z'(t)\geq \sqrt{g_z(t)-R} / k_{R\varepsilon\tau}$. Integrating this
differential inequality we obtain 
$
 g_z(t)\geq R+  {t^2}/ ({4 k_{R\varepsilon\tau}^2})$,
and thus  
\[
1-\varepsilon R H_{\lambda}(z,f(r;R)-t) =1-\frac{R}{g_z(t)} 
\geq\frac{t^2}{4Rk_{R\varepsilon\tau}^2+f(0;R)^2},
\]
that is Claim \eqref{H_R_2}.

If $0<\delta<R$, \eqref{eq:disfz} implies $g_z'(t)\geq
\sqrt{\delta}/k_{R\varepsilon\tau}$ and an integration gives  $g_{z}(t)\geq
\sqrt{\delta} \, t+ R/k_{R\varepsilon\tau}$.
Then we obtain
\[
1-\varepsilon R
H_{\lambda}(z,f(r;R)-t)=1-\frac{R}{g_z(t)} 
\geq\frac{\sqrt{\delta}}{Rk_{R\varepsilon\tau}+f(0;R)}t,
\]
that is Claim \eqref{H_R_1}.

\end{proof}

We can now prove Theorem \ref{thm:quant}, the last result of the paper.
The proof follows   the lines of \cite{FLM}.

 \begin{proof}[Proof of Theorem \ref{thm:quant}]
Let $u:C_{R,\delta}\to\R$, $0\leq\delta<1$,
be the function constructed in  Theorem \ref{thm:folsub}
and let  $S_\lambda=\{(z,t) \in C_{R,\delta} :u(z,t)=\lambda\}$, $\lambda\in\R$,
be
the leaves of
the foliation. Let  $\nabla u$ be  the Riemannian gradient of $u$.
The vector
field  
\[
V(z,t)=-\frac {\nabla u(z,t)}{|\nabla u(z,t)|},\quad (z,t) \in C_{R,\delta},
\] 
 satisfies the
following
properties:
\begin{itemize}
\item[i)]  $|V|= 1$.
\item[ii)] For $(z,t)\in\Sigma_R \cap C_{R,\delta}$ we have 
$V(z,t)= \nu_{ \Sigma _R}(z,t)$, where $\nu_{\Sigma_R} = \enn$ is the exterior
 unit normal to $\Sigma_R$.
\item[iii)] For any point $(z,t) 
\in S_\lambda $, $\lambda\in\R$,  the Riemannian divergence of $V$ satisfies
\begin{equation} \label{INEQ}
\begin{split}
 &\frac{1}{2}\mathrm{div} V(z,t) =H_{\lambda}(z,t) \leq
\frac{1}{\varepsilon R} \quad \text{for }\lambda>R,
\\
 &\frac{1}{2}\mathrm{div} V(z,t) =H_{\lambda}(z,t) =
\frac{1}{\varepsilon R}\quad\text{for }0< \lambda\leq R.
 \end{split}
\end{equation}
\end{itemize}

Let $\nu_\Sigma$ be the exterior unit normal to the surface $\Sigma=\partial E$.
By the Gauss-Green formula and \eqref{INEQ} it follows that 
\begin{align}
\label{eq:E-F}\nonumber
\mathcal L^3(E_R\setminus E)
& \geq \frac{\varepsilon R}{2}\int_{E_R\setminus E}\div V\;d\mathcal L^3 \\
\nonumber
&=\frac{\varepsilon R}{2}\Big(\int_{\Sigma_R \setminus \bar E}\la
V,\nu_{\Sigma_R}\ra\;d\A -\int_{\Sigma \cap E_R} \la
V,\nu_{\Sigma}\ra\;d\A   \Big)
\\
\nonumber
&\geq \frac{\varepsilon R}{2}\big( \A(\Sigma_R  \setminus \bar E  )-\A(\Sigma
\cap E_R)\big).
\end{align}
In the last inequality we used the Cauchy-Schwarz inequality and  the fact that
$\la V,\nu_{\Sigma _R}\ra=1$  
on $\Sigma_R\setminus \bar E$. 
By a similar computation we also have
\begin{align}
\nonumber
\mathcal L^3(E\setminus  E_R)& 
=\frac{\varepsilon R}{2}\int_{E\setminus  E_R}
\div V\;d\mathcal L^3
\\
\nonumber&
=\dfrac{\varepsilon R}{2}\left\{\int_{
\Sigma\setminus \bar E_R}\langle V,\nu_{ \Sigma }\rangle d\A 
-\int_{\Sigma_R  \cap E}\langle
V,\nu_{\Sigma_R}\rangle d\A \right\}
\\
\nonumber
&\leq  \frac{\varepsilon R}{2}\big( \A( \Sigma \setminus \bar E_R) -\A
(\Sigma_R\cap E)\big).
\end{align}
Using the inequalities above and the fact that $\mathcal L^3(E) =\mathcal
L^3(E_R)$, it  follows that:
\[
\begin{split}
\frac{\varepsilon R}{2}\big(\A (\Sigma_R  \setminus \bar E)-\A(\Sigma
\cap E_R)\big)
&\leq \frac{\varepsilon R}{2}\int_{E_R\setminus E}\div V\;d\mathcal L^3
\\
&=\mathcal L^3(E\setminus E_R)-\int_{E_R\setminus E}\Big(1-\frac{\varepsilon
R}{2}\div V\Big)\;d\mathcal L^3
\\
&\leq \frac{\varepsilon R}{2}\big(\A(\Sigma \setminus \bar E_R)
-\A(\Sigma_R\cap E) \big)  -\mathcal G(E_R\setminus E),
\end{split}\]
where we let 
\[
\mathcal G( E_R\setminus E)=
\int_{ E_R\setminus E}\Big( 1-\dfrac{\varepsilon R}{2}\div V\Big)\;d\mathcal
L^3.
\]
Hence, we obtain
\begin{equation} \label{F_E}
 \A(\Sigma) - \A(\Sigma_R)  \geq \frac{2}{\varepsilon R}\mathcal G(
E_R\setminus E).
\end{equation}

For any $z$ with $|z|<R-\delta$, we define
the vertical sections
 $ E_R^z =\{t\in\R : (z,t) \in  E_R\}$ and
$E^z =
\{t\in\R:(z,t) \in E\}$. 
By Fubini-Tonelli theorem, we
have
\[
 \begin{split}
\mathcal G( E_R\setminus E) &
= \int _{\{|z|<R\}} \int_{ E_R^z\setminus E^z} \Big( 1-\dfrac{\varepsilon
R}{2}\div V(z,t)\Big)dt  \,dz.
\end{split}
\]
The function  $t\mapsto \div V(z,t)$ is increasing, and thus 
letting $m(z) = \mathcal L^1 ( E_R^z\setminus E^z)$, by monotonicity we obtain
\[
\begin{split}
 \mathcal G( E_R\setminus E) & 
 \geq \int_{\{|z|<1\}}
\int_{f(|z|;R)-m(z)}^{f(|z|;R)}
\Big( 1-\dfrac{\varepsilon R}{2}\div V(z,t) \Big)dt \,dz
\\&
= \int_{\{|z|<1\}}
\int_{0}^{m(z)}
\left(1-\frac{R}{g_z(t)} \right) dt \,dz,
\end{split}
\]
where $g_z(t) = u(z, f(|z|;R)-t)$ is the function introduced in \eqref{pippo}.

When $\delta=0$, by the inequality \eqref{H_R_2}
and by H\"older inequality we find 
\begin{equation}\label{pix}
\begin{split}
\mathcal G( E_R\setminus E) & 
    \geq \frac{1}{4Rk_{R\varepsilon\tau}^2+f(0;R)^2} 
\int_{\{|z|<R\}}
    \int_{0}^{m(z)} t^2  dt \,dz
%
\\
&
\geq
\frac{1}{24\pi^2R^4(4Rk_{R\varepsilon\tau}^2+f(0;R)^2)
} \mathcal L^3 ( E\Delta E_R)
^3.
\end{split}
\end{equation}
From \eqref{pix}  and \eqref{F_E} we obtain \eqref{TP}.

By \eqref{H_R_1}, when $0<\delta<1$ the function $g_z$ satisfies the 
estimate $1-1/g_z(t) \geq
(\sqrt{\delta}/(k_{R\varepsilon\tau}+f(0;R)))t$ and  we
find
\begin{equation}\label{pox}
\begin{split}
\mathcal G( E_R\setminus E) & 
    \geq
\frac{\sqrt{\delta}}
{Rk_{R\varepsilon\tau}+f(0;R)}
    \int_{\{|z|<R\}}
    \int_{0}^{m(z)} t  \, dt \,dz 
\\
&
\geq  \frac{\sqrt{\delta}}{8\pi
R^2(Rk_{R\varepsilon\tau}+f(0;R))}
 \mathcal L^3 ( E\Delta E_R)
^2.
\end{split}
\end{equation}
From \eqref{pox} and  \eqref{F_E} we obtain  Claim \eqref{TP2}.

 \end{proof}

\end{document}